\numberwithin{equation}{section}
\theoremstyle{plain}
\newtheorem{thm}[equation]{Theorem}
\newtheorem{prop}[equation]{Proposition}
\newtheorem{lem}[equation]{Lemma}
\newtheorem{cor}[equation]{Corollary}
\newtheorem{construction}[equation]{Construction}
\newtheorem*{cor*}{Corollary}
\newtheorem*{prop*}{Proposition}
\newtheorem*{thm*}{Theorem}
\newtheorem*{thma*}{Theorem A}
\newtheorem*{thmb*}{Theorem B}
\newtheorem*{thmc*}{Theorem C}
\theoremstyle{remark}
\newtheorem{exm}[equation]{Example}
\newtheorem{rmk}[equation]{Remark}
\theoremstyle{definition}
\newtheorem{defn}[equation]{Definition}
\newenvironment{enumroman}
{\begin{enumerate}}
{\end{enumerate}}
\DeclareMathOperator{\ann}{ann}
\DeclareMathOperator{\opchar}{char}
\DeclareMathOperator{\disc}{disc}
\DeclareMathOperator{\Frac}{Frac}
\DeclareMathOperator{\Gal}{Gal}
\DeclareMathOperator{\GL}{GL}
\DeclareMathOperator{\Hom}{Hom}
\DeclareMathOperator{\id}{id}
\DeclareMathOperator{\nrd}{nrd}
\DeclareMathOperator{\Pic}{Pic}
\DeclareMathOperator{\Spec}{Spec}
\DeclareMathOperator{\sq}{sq}
\DeclareMathOperator{\Sym}{Sym}
\DeclareMathOperator{\trd}{trd}
\newcommand{\A}{\mathbb A}
\newcommand{\G}{\mathbb G}
\newcommand{\NN}{\mathbb N}
\newcommand{\Q}{\mathbb Q}
\newcommand{\Z}{\mathbb Z}
\newcommand{\scrA}{\mathscr A}
\newcommand{\scrB}{\mathscr B}
\newcommand{\scrC}{\mathscr C}
\newcommand{\scrI}{\mathscr I}
\newcommand{\scrL}{\mathscr L}
\newcommand{\scrM}{\mathscr M}
\newcommand{\scrO}{\mathscr O}
\newcommand{\scrS}{\mathscr S}
\newcommand{\scrT}{\mathscr T}
\newcommand{\tbigwedge}{\textstyle{\bigwedge}}
\newcommand{\psmod}[1]{~(\textup{\text{mod}}~{#1})}
\newcommand{\la}{\langle}
\newcommand{\ra}{\rangle}
\newcommand{\Quad}{\text{\textup{Quad}}}
\newcommand{\Disc}{\text{\textup{Disc}}}
\newcommand{\AS}{\text{\textup{AS}}}
\newcommand{\scrQuad}{\text{\textup{\textbf{Quad}}}}
\newcommand{\scrDisc}{\text{\textup{\textbf{Disc}}}}
\newcommand{\scrPic}{\text{\textup{\textbf{Pic}}}}
\newcommand{\scrK}{\text{\textup{\textbf{K}}}}
\newcommand{\sbreg}{{}_{\textup{canc}}}
\newcommand{\sbuniv}{{}_{\textup{univ}}}
\newcommand{\defi}[1]{{{\fontfamily{lmss}\selectfont \textit{#1}}}}
\begin{document}

\title{Discriminants and the monoid of quadratic rings}
\author{John Voight}
\address{Department of Mathematics and Statistics, University of Vermont, 16
  Colchester Ave, Burlington, VT 05401, USA; Department of Mathematics,
  Dartmouth College, 6188 Kemeny Hall, Hanover, NH 03755, USA}
\email{jvoight@gmail.com}
\date{\today}

\begin{abstract}
We consider the natural monoid structure on the set of quadratic rings over an arbitrary base scheme and characterize this monoid in terms of discriminants.
\end{abstract}

\maketitle

Quadratic field extensions $K$ of $\Q$ are characterized by their discriminants.  Indeed, there is a bijection
\begin{align*}
\left\{ \begin{minipage}{20ex} 
\begin{center}
Separable quadratic \\[0.5ex] algebras over $\Q$ \\[0.5ex] up to isomorphism
\end{center} 
\end{minipage}
\right\} &\xrightarrow{\sim} \Q^\times/\Q^{\times 2} \\
\Q[\sqrt{d}] = \Q[x]/(x^2-d) &\mapsto d\Q^{\times 2}
\end{align*}
where a separable quadratic algebra over $\Q$ is either a quadratic field extension or the algebra $\Q[\sqrt{1}]\simeq \Q \times \Q$ of discriminant $1$.  In particular, the set of isomorphism classes of separable quadratic extensions of $\Q$ can be given the structure of an elementary abelian $2$-group, with identity element the class of $\Q \times \Q$: we have simply
\[ \Q[\sqrt{d_1}] * \Q[\sqrt{d_2}] = \Q[\sqrt{d_1d_2}] \]
up to isomorphism.  If $d_1,d_2,d_1d_2 \in \Q^\times \setminus \Q^{\times 2}$ then $\Q(\sqrt{d_1d_2})$ sits as the third quadratic subfield of the compositum $\Q(\sqrt{d_1},\sqrt{d_2})$:
\[ 
\xymatrix{
& \Q(\sqrt{d_1},\sqrt{d_2}) \ar@{-}[dl] \ar@{-}[d] \ar@{-}[dr]  \\
\Q(\sqrt{d_1}) \ar@{-}[dr] & \ar@{-}[d] \Q(\sqrt{d_1d_2}) & \ar@{-}[dl] \Q(\sqrt{d_2}) \\
& \Q
} \] 
Indeed, if $\sigma_1$ is the nontrivial element of $\Gal(\Q(\sqrt{d_1})/\Q)$, then there is a unique extension of $\sigma_1$ to $\Q(\sqrt{d_1},\sqrt{d_2})$ leaving $\Q(\sqrt{d_2})$ fixed, similarly with $\sigma_2$, and $\Q(\sqrt{d_1d_2})$ is the fixed field of the composition $\sigma_1\sigma_2=\sigma_2\sigma_1$.  

This characterization of quadratic extensions works over any base field $F$ with $\opchar F \neq 2$ and is summarized concisely in the Kummer theory isomorphism 
\[ H^1(\Gal(\overline{F}/F),\{\pm 1\})=\Hom(\Gal(\overline{F}/F),\{\pm 1\}) \simeq F^{\times}/F^{\times 2}. \]  

On the other hand, over a field $F$ with $\opchar F = 2$, all separable quadratic extensions have trivial discriminant and instead they are  classified by the (additive) Artin-Schreier group 
\[ F/\wp(F) \quad \text{where} \quad \wp(F)=\{r+r^2 : r \in F\} \]
with the class of $a \in F$ in correspondence with the isomorphism class of the extension $F[x]/(x^2-x+a)$.  By similar considerations as above, we again find a natural structure of an elementary abelian $2$-group on the set of isomorphism classes of separable quadratic extensions of $F$.

One can extend this correspondence between quadratic extensions and discriminants integrally, as follows.  Let $R$ be a commutative ring.  A \defi{free quadratic} $R$-algebra (also called a \defi{free quadratic ring} over $R$) is an $R$-algebra $S$  (associative with $1$) that is free of rank $2$ as an $R$-module.  Let $S$ be a free quadratic $R$-algebra.  Then $S/R \simeq \wedge^2 S \simeq R$ is projective, so there is an $R$-basis $1,x$ for $S$; we find that $x^2=tx-n$ for some $t,n \in R$ and that $S$ is commutative.  The map $\sigma:S \to S$ induced by $x \mapsto t-x$ is the unique \defi{standard involution} on $S$, an $R$-linear (anti-)automorphism such that $y\sigma(y) \in R$ for all $y \in S$.  The class of the \defi{discriminant} of $S$
\[ d=d(S)=(x-\sigma(x))^2=t^2-4n \]
in $R/R^{\times 2}$ is independent of the choice of basis $1,x$.  A discriminant $d$ satisfies the congruence $d \equiv t^2 \pmod{4R}$, so for example if $R=\Z$ then $d \equiv 0,1 \pmod{4}$.

Now suppose that $R$ is an integrally closed domain of characteristic not $2$.  Then there is a bijection 
\begin{align*}
\left\{ \begin{minipage}{28ex} 
\begin{center}
Free quadratic rings over $R$ \\[0.5ex]
up to isomorphism
\end{center} 
\end{minipage}
\right\} &\xrightarrow{\sim} \{ d \in R : d \text{ is a square in $R/4R$}\}/R^{\times 2} \\
S &\mapsto d(S)
\end{align*}
For example, over $R=\Z$, the free quadratic ring $S(d)$ over $\Z$ of discriminant $d \in \Z=\Z/\Z^{\times 2}$ with $d \equiv 0,1 \psmod{4}$ is given by
\[ S(d)=\begin{cases}
\Z[x]/(x^2) \hookrightarrow \Q[x]/(x^2), & \text{ if $d=0$}; \\
\Z[x]/(x^2-\sqrt{d}x) \hookrightarrow \Q \times \Q, & \text{ if $d \neq 0$ is a square}; \\
\Z[(d+\sqrt{d})/2] \hookrightarrow \Q(\sqrt{d}), & \text{ otherwise.}
\end{cases}
\]
The set of discriminants under multiplication has the structure of a \defi{commutative monoid}, a nonempty set equipped with a commutative, binary operation and identity element.  Hence so does the set of isomorphism classes of free quadratic $R$-algebras, an operation we denote by $*$: the identity element is the class of $R \times R \simeq R[x]/(x^2-x)$.  The class of the ring $R[x]/(x^2)$ with discriminant $0$ is called an \defi{absorbing element}.

More generally, a \defi{quadratic} $R$-algebra or \defi{quadratic ring over $R$} is an $R$-algebra $S$ which is locally free of rank $2$ as an $R$-module.  By definition, a quadratic ring over $R$ localized at any prime (or maximal) ideal of $R$ is a free quadratic $R$-algebra.  Being true locally, a quadratic $R$-algebra $S$ is commutative and has a unique standard involution.

There is a natural description of quadratic $R$-algebras as a stack quotient, as follows.  A free quadratic $R$-algebra equipped with a basis $1,x$ has multiplication table uniquely determined by $t,n \in R$ and has no automorphisms, and so the functor which associates to a commutative ring $R$ the set of free quadratic $R$-algebras with basis (up to isomorphism) is represented by two-dimensional affine space $\A^2$ (over $\Z$).  The change of basis for a free quadratic $R$-algebra is of the form $x \mapsto u(x+r)$ with $u \in R^\times$ and $r \in R$, mapping 
\[ (t,n) \mapsto (u(t+2r), u^2(n+tr+r^2)). \]
Therefore, we have a map from the set of free quadratic $R$-algebras with basis to the quotient of $\A^2(R)$ by $G(R)=(\G_m \rtimes \G_a)(R)$ with the above action.  Working over $\Spec \Z$, the group scheme $G$ is naturally a subgroup scheme of $\GL_2$, but it does not act linearly on $\A^2$, and the Artin stack $[\A^2/G]$ has dimension zero over $\Spec \Z$!   Nevertheless, the set $[\A^2/G](R)$ is in bijection with the set of quadratic $R$-algebras up to isomorphism.  

Recall that a commutative $R$-algebra $S$ is \defi{separable} if $S$ is (faithfully) projective as a $S \otimes_R S$-module via the map $x \otimes y \mapsto xy$.  A free quadratic $R$-algebra $S$ is separable if and only if $d(S) \in R^\times$; so, for example, the only separable (free) $R$-algebra over $\Z$ is the ring $\Z \times \Z$ of discriminant $1$, an impoverishment indeed!  A separable quadratic $R$-algebra $S$ is \'etale over $R$, and $R$ is equal to the fixed subring of the standard involution of $S$ over $R$.  (In many contexts, one says then that $S$ is Galois over $R$ with Galois group $\Z/2\Z$, though authors differ on precise terminology.  See Lenstra \cite{Lenstra} for one approach to Galois theory for schemes.)  If $S,T$ are separable free quadratic $R$-algebras where $R$ is a Dedekind domain of characteristic not 2, having standard involutions $\sigma,\tau$, respectively, then the monoid product $S*T$ defined above (by transporting the monoid structure on the set of discriminants) is the fixed subring of $S \otimes_R T$ by $\sigma \otimes \tau$, in analogy with the case of fields.

The characterization of free quadratic $R$-algebras by their discriminants is an example of the parametrization of algebraic structures, corresponding to the Lie group $A_1$ in the language of Bhargava \cite{BhargavaGCG}.  Results in this area direction go back to Gauss's composition law for binary quadratic forms and have been extended in recent years by Bhargava \cite{BS1}, Wood \cite{MWood}, and others.  Indeed, several authors have considered the case of quadratic $R$-algebras, including Kanzaki \cite{Kanzaki} and Small \cite{Small}.  In this article, we consider a very general instance of this monoidal correspondence between quadratic $R$-algebras and discriminants over an arbitrary base scheme.  

Let $X$ be a scheme.  A \defi{quadratic $\scrO_X$-algebra} is a coherent sheaf $\scrS$ of $\scrO_X$-algebras which is locally free of rank $2$ as a sheaf of $\scrO_X$-modules.
Equivalently, a quadratic $\scrO_X$-algebra is specified by a finite locally free morphism of schemes $\phi:Y \to X$ of degree $2$ (sometimes called a \defi{double cover}): the sheaf $\phi_* \scrO_Y$ is a sheaf of $\scrO_X$-algebras that is locally free of rank $2$.  If $f:X \to Z$ is a morphism of schemes, and $\scrS$ is a quadratic $\scrO_Z$-algebra, then the pull-back $f^* \scrS$ is a quadratic $\scrO_X$-algebra.  Let $\Quad(X)$ denote the set of isomorphism classes of quadratic $\scrO_X$-algebras, and for an invertible $\scrO_X$-module $\scrL$ let $\Quad(X;\scrL) \subseteq \Quad(X)$ be the subset of those algebras $\scrS$ such that there exists an isomorphism $\tbigwedge^2 \scrS \simeq \scrL$ of $\scrO_X$-modules.  

Our first result provides an axiomatic description of the monoid structure on the set $\Quad(X)$ (Theorem \ref{thma}).  

\begin{thma*}
There is a unique system of binary operations 
\[ *_X : \Quad(X) \times \Quad(X) \to \Quad(X), \]
one for each scheme $X$, such that:
\begin{enumroman}
\item $\Quad(X)$ is a commutative monoid under $*_X$, with identity element the isomorphism class of $\scrO_X \times \scrO_X$;
\item The association $X \mapsto (\Quad(X),*_X)$ from schemes to commutative monoids is functorial in $X$: for each morphism $f:X \to Z$ of schemes, the diagram
\[
\xymatrix{
\Quad(Z) \times \Quad(Z) \ar[r]^(0.63){*_Z} \ar[d] & \Quad(Z) \ar[d]^{f^*} \\
\Quad(X) \times \Quad(X) \ar[r]^(0.63){*_X} & \Quad(X) 
} \]
is commutative; and 
\item If $X=\Spec R$ and $S,T$ are separable quadratic $R$-algebras with standard involutions $\sigma,\tau$, then $S *_{\Spec R} T$ is the fixed subring of $S \otimes_R T$ under $\sigma \otimes \tau$.
\end{enumroman}
\end{thma*}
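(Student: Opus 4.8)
The plan is to reduce every assertion to a single \emph{universal} computation and then spread it to all schemes by functoriality. Recall from the discussion above that the functor sending a ring to its free quadratic algebras \emph{with a chosen basis} is represented by $\A^2=\Spec\Z[t,n]$, with the change of basis $x\mapsto u(x+r)$ acting through $G=\G_m\rtimes\G_a$ by $(t,n)\mapsto(u(t+2r),u^2(n+tr+r^2))$. Consequently, over the universal base $R=\Z[t_1,n_1,t_2,n_2]$ there is a universal pair of free quadratic algebras $S^{\mathrm{univ}},T^{\mathrm{univ}}$ (with discriminants $d_1=t_1^2-4n_1$ and $d_2=t_2^2-4n_2$), and every pair $(\scrS,\scrT)$ over any $X$ is, Zariski-locally on $X$ where $\scrS,\scrT$ acquire bases, pulled back from this pair. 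Since $R$ is a UFD---in particular an integrally closed domain of characteristic not $2$, with $R^\times=\{\pm1\}$ and $d_1,d_2$ distinct irreducibles---the classification of free quadratic algebras by their discriminants stated above applies verbatim over $R$ and over its relevant localizations; this is the engine for both existence and uniqueness.

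\textbf{Existence.} I would first write down the multiplication on the level of bases as the morphism $\mu\colon\A^2\times\A^2\to\A^2$ over $\Z$ given by
\[ \mu(t_1,n_1,t_2,n_2)=\bigl(t_1t_2,\ t_1^2n_2+t_2^2n_1-4n_1n_2\bigr), \]
whose defining feature is that $\disc\mu=(t_1^2-4n_1)(t_2^2-4n_2)=d_1d_2$, so that $\mu$ realizes multiplication of discriminants. One then defines $\scrS*_X\scrT$ by choosing a Zariski cover trivializing $\scrS$ and $\scrT$, pulling back the universal algebra along the composite with $\mu$ on each open, and gluing. Granting that this is well defined (see below), functoriality in $X$ is immediate because $\mu$ is a single $\Z$-morphism. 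The identity law holds on the nose, since $\scrO_X\times\scrO_X$ corresponds to $(t,n)=(1,0)$ and $\mu((t_1,n_1),(1,0))=(t_1,n_1)$; commutativity holds because $\mu$ is symmetric; and associativity, together with the remaining monoid coherences, follows by comparing the two triple products over the $6$-variable universal base, where both are free quadratic algebras of the same discriminant $d_1d_2d_3$ and hence isomorphic by the classification. Finally I would verify axiom (iii): over a base on which $\scrS,\scrT$ are separable, the element $w=2x_1x_2-t_1x_2-t_2x_1$ is fixed by $\sigma\otimes\tau$, the pair $1,w$ is an $R$-basis of the fixed ring (its discriminant $d_1d_2$ being a unit), and $\Tr(w)=-t_1t_2$, $\N(w)=t_1^2n_2+t_2^2n_1-4n_1n_2$, so the fixed ring agrees with the output of $\mu$ up to the sign change $u=-1$.

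\textbf{Uniqueness.} Suppose $*'$ is any system satisfying (i)--(iii). By the reduction above it suffices to pin down $Q:=S^{\mathrm{univ}}*'T^{\mathrm{univ}}$ over $R$. On the separable locus $\Spec R[1/(d_1d_2)]$ both factors are separable, so (iii) identifies $Q$ there with the fixed ring of $\sigma\otimes\tau$, whose discriminant is $d_1d_2$. Hence, pulling back along the injection $R\hookrightarrow R[1/(d_1d_2)]$ and using that the units of $R[1/(d_1d_2)]$ are $\pm d_1^a d_2^b$, the discriminant $\disc Q\in R$ must equal $d_1^{2a+1}d_2^{2b+1}$ for some $a,b\ge 0$. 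To eliminate the remaining square factors I would invoke the identity law through the specialization $(t_2,n_2)\mapsto(1,0)$: functoriality gives $\disc Q|_{t_2=1,\,n_2=0}=d_1^{2a+1}$, while the identity law forces this to be $d_1$, so $a=0$, and symmetrically $b=0$. Thus $\disc Q=d_1d_2$ exactly, and the classification over the integrally closed domain $R$ determines $Q$ up to isomorphism---necessarily the value produced by $\mu$. Functoriality together with descent for quadratic algebras then forces $*'=*_X$ for every $X$.

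\textbf{Main obstacle.} The delicate point is the well-definedness claimed in the existence step, i.e.\ that $\mu$ descends from bases to isomorphism classes over an \emph{arbitrary} base. The obstruction is that $\mu$ is not equivariant for any homomorphism $G\times G\to G$: matching the first coordinate forces the change of basis $(u_1u_2,\ r_1t_2+r_2t_1+2r_1r_2)$, whose translation part depends on the \emph{point} $(t_1,t_2)$ and not merely on the group elements. One must therefore check that these point-dependent elements assemble into a $1$-cocycle, so that $\mu$ defines a genuine morphism of quotient stacks $[\A^2/G]\times[\A^2/G]\to[\A^2/G]$ and the local products glue. This is exactly the regime---non-normal bases, and residue characteristic $2$, where the $\G_a$-action degenerates and discriminants no longer classify---in which the classification powering the uniqueness argument is unavailable, so the gluing must be established by direct verification of the cocycle identity rather than by appeal to discriminants.
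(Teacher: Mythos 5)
Your overall architecture matches the paper's: the same universal formula $\mu(t_1,n_1,t_2,n_2)=(t_1t_2,\;t_1^2n_2+t_2^2n_1-4n_1n_2)$ (Construction \ref{keycons}), and for uniqueness the same reduction to the universal base $\Z[t_1,n_1,t_2,n_2]$, localization at $D=d_1d_2$ to invoke axiom (iii), and the identity law to kill the residual powers of $D$. Where you diverge is in how you finish: you appeal twice to the classification of free quadratic algebras by discriminants over an integrally closed domain of characteristic $\neq 2$ (once for associativity over the $6$-variable base, once to conclude that $\disc Q=d_1d_2$ pins down $Q$), whereas the paper checks associativity by direct computation and, for uniqueness, pins down the actual generator: it writes the generator of $S*'T$ as $(az+b)/D^k$ and uses that $\Z[t,n,s,m]$ is a UFD with units $\pm1$, plus Gauss's lemma on the trace, to force $b=0$ and $a=D^k$ before the identity-law specialization. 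Your shortcut is legitimate given the classification stated in the introduction, but note two omissions it hides: (1) you must first know $S*'T$ is \emph{free} over the universal base before its discriminant is an element of $R$ and before the classification applies --- the paper gets this from $\Pic(\Z[t,n,s,m])=0$ (seminormality of $\Z$); (2) knowing the product up to isomorphism on each member of a trivializing cover does not by itself determine the global isomorphism class, so ``descent forces $*'=*_X$'' needs the compatibility of local identifications discussed below.

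The genuine gap is the one you yourself label the ``main obstacle'' and then leave unresolved: the descent of $\mu$ from free algebras \emph{with basis} to isomorphism classes, equivalently the compatibility of Construction \ref{keycons} with change of basis. Without it the existence half produces neither a well-defined class over an affine base nor transition maps with which to glue over a general $X$, and --- as you correctly observe --- the discriminant classification is unavailable in exactly the regime (non-normal base, residue characteristic $2$) where one needs it, so no appeal to discriminants can substitute. But this is not an obstacle requiring a stack-theoretic cocycle formalism; it is a one-line verification, and it is precisely the paper's Lemma \ref{lem:funct_isom}: for $\phi(x)=ux'+r$, $\psi(y)=vy'+q$ one checks directly that $w\mapsto (uv)w'+(qt+rs-2qr)$ satisfies the defining quadratic relation of $S'*T'$, giving a \emph{canonical} isomorphism $\phi*\psi$; canonicity is what makes the local products glue on triple overlaps (Lemma \ref{lem:ref_glue}). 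You even wrote down the correct combined change-of-basis element $(u_1u_2,\;r_1t_2+r_2t_1+2r_1r_2)$ (the paper's, up to sign conventions) --- what is missing is the actual verification that it works, and your proof is incomplete until that computation is carried out.
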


The binary operation is defined locally (Construction \ref{keycons}): if $X=\Spec R$, and $S=R\oplus Rx$ and $T=R \oplus Ry$ are free quadratic $R$-algebras with $x^2=tx-n$ and $y^2=sy-m$ then we define the free quadratic $R$-algebra 
\[ S*T = R \oplus Rw \] 
where
\[ w^2 = (st)w - (mt^2+ns^2 - 4nm). \]
This explicit description (in the free case over an affine base) is given by Hahn \cite[Exercises 14--20, pp.\ 42--43]{Hahn}.  

A general investigation of the monoid structure on quadratic algebras goes back at least to Loos \cite{LoosMan}.  Loos gives via a universal construction a tensor product on the larger category of unital quadratic forms (quadratic forms representing $1$); this category is equivalent to the category of quadratic algebras for forms on a finitely generated, projective module of rank $2$ \cite[Proposition 1.6]{LoosMan} as long as one takes morphisms as isomorphisms in the category \cite[\S 1.4]{LoosDisc}.  (See also Loos \cite[\S 6.1]{LoosDisc} for further treatment.)  The existence of the monoid structure was also established in an unpublished letter of Deligne \cite{DeligneLetter} by a different method: he associates to every $R$-algebra its discriminant algebra (a quadratic algebra) and extends the natural operation of addition of $\Z/2\Z$-torsors from the \'etale case to the general case by geometric arguments.  Our proof of Theorem A above carries the same feel as these results, but it is accomplished in a more direct fashion and gives a characterization (in particular, uniqueness).  

Recently, there has been renewed interest in the construction of discriminant algebras (sending an $R$-algebra $A$ of rank $n$ to a quadratic $R$-algebra) by Loos \cite{LoosDisc}, Rost \cite{Rost}, and more recently by Biesel and Gioia \cite{BieselGioia}.  Indeed, Biesel and Gioia \cite[Section 8]{BieselGioia} describe the monoid operation in Theorem A over an affine base in the context of discriminant algebras.  We hope that our theorem will have some application in this context.

Our second result characterizes quadratic algebras in terms of discriminants. A \defi{discriminant} (over $X$) is a pair $(d,\scrL)$ such that $\scrL$ is an invertible $\scrO_X$-module and $d \in (\scrL\spcheck)^{\otimes 2}$ is a global section which is a \defi{square modulo $4$}: there exists a global section $\overline{t} \in \scrL\spcheck/2\scrL\spcheck = \scrL\spcheck \otimes \scrO_X/2\scrO_X$ such that $\overline{t} \otimes \overline{t}=\overline{d} \in (\scrL\spcheck)^{\otimes 2}/4(\scrL\spcheck)^{\otimes 2}$. 
We can of course also think of $d \in (\scrL\spcheck)^{\otimes 2}$ as an $\scrO_X$-module homomorphism $d:\scrL^{\otimes 2} \to \scrO_X$; but such a global section is also equivalently given by a quadratic form $D:\scrL \to \scrO_X$ (see section 2).

An isomorphism of discriminants $(d,\scrL)$, $(d',\scrL')$ is an isomorphism $f:\scrL \xrightarrow{\sim} \scrL'$ such that $(f\spcheck)^{\otimes 2}(d')=d$.  For example, if $X=\Spec R$ for $R$ a commutative ring and $\scrL=\scrO_X=\widetilde{R}$, then as above a discriminant is specified by an element $d \in R$ such that $d \equiv t^2 \pmod{4R}$ for some $t \in R$ (noting that only $t \in R/2R$ matters), and two discriminants $d,d'$ are isomorphic if and only if there exists $u \in R^\times$ such that $u^2 d'= d$.  Thinking of a discriminant as a quadratic form $D:\scrL \to \scrO_X$, its image generates a locally principal ideal sheaf $\scrI \subseteq \scrO_X$, and the set of discriminants with given locally free image $\scrI \subseteq \scrO_X$, if nonempty, is a principal homogeneous space for the group $\scrO_X^{\times}/\scrO_X^{\times 2}$.
Let $\Disc(X)$ denote the set of isomorphism classes of discriminants and $\Disc(X;\scrL) \subseteq \Disc(X)$ the subset with underlying line bundle $\scrL$.  Then the tensor product 
\[ (d,\scrL) * (d',\scrL') = (d \otimes d', \scrL \otimes \scrL') \]
gives $\Disc(X)$ and $\Disc(X;\scrO_X)$ the structure of a commutative monoid with identity element the class of $(1,\scrO_X)$.  

A quadratic $\scrO_X$-algebra $\scrS$ has a discriminant $\disc(\scrS) = (d(\scrS),\tbigwedge^2 \scrS)$, defined by
\begin{align*}
d(\scrS) : (\tbigwedge^2 \scrS)^{\otimes 2} &\to \scrO_X \\
(x \wedge y) \otimes (z \wedge w) &\mapsto (x\sigma(y)-\sigma(x)y)(z\sigma(w)-\sigma(z)w)
\end{align*}
where $\sigma$ is the unique standard involution on $\scrS$.  Although a priori the codomain of $d(\scrS)$ is $\scrS$, in fact its image lies in $\scrO_X$: indeed, if $X=\Spec R$ and $\scrS=\Spec S$ and $S$ is free with basis $1,x$, then $(\tbigwedge^2 S)^{\otimes 2}$ is free generated by $(1 \wedge x) \otimes (1 \wedge x)$ and
\[ 1 \wedge x \otimes 1 \wedge x \mapsto (x-\sigma(x))^2 \in R. \]
We have a natural forgetful map $\Disc(X) \to \Pic(X)$ where $(d,\scrL)$ maps to the isomorphism class of $\scrL$.

We say a sequence $A \xrightarrow{f} B \xrightarrow{g} C$ of commutative monoids is \defi{exact} if $f$ is injective, $g$ is surjective, and for all $z,w \in B$, we have
\[ \text{$g(z)=g(w)$ if and only if there exists $x,y \in A$ such that $xz=yw$;} \]
equivalently, the sequence is exact if and only if $f$ is injective and $g$ induces an isomorphism of monoids $B/f(A) \simeq C$.  (For a review of monoids, see Section 1.)  Similarly, a sequence $\scrA \xrightarrow{f} \scrB \xrightarrow{g} \scrC$ of sheaves of monoids is \defi{exact} if the sheaf associated to the presheaf $U \mapsto \scrB(U)/f(\scrA(U))$ is isomorphic to $\scrC$, or equivalently if the induced sequence $\scrA_x \xrightarrow{f_x} \scrB_x \xrightarrow{g_x} \scrC_x$ of monoid stalks is exact for all $x \in X$.  

We now describe the monoid $\Quad(X)$.  We begin with the statement that the forgetful map is compatible with the discriminant map, as follows.

\begin{thmb*}
Let $X$ be a scheme.  Then the diagram of commutative monoids 
\[
\xymatrix{
\Quad(X;\scrO_X) \ar[r] \ar[d]^{\disc} & \Quad(X) \ar[r]^-{\wedge^2}  \ar[d]^{\disc} & \Pic(X) \ar@{=}[d] \\
\Disc(X;\scrO_X) \ar[r] & \Disc(X) \ar[r] & \Pic(X) 
} \]
is functorial and commutative with exact rows and Zariski locally surjective columns.
\end{thmb*}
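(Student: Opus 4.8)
The plan is to verify the four assertions in turn, reducing each to the explicit local construction $S*T = R\oplus Rw$ with $w^2 = (st)w - (mt^2+ns^2-4nm)$. I would first record that all maps are homomorphisms of monoids and that both squares commute on the nose. The right square and functoriality in $X$ are immediate: the underlying line bundle of $\disc(\scrS)=(d(\scrS),\wedge^2\scrS)$ is by construction $\wedge^2\scrS$, and $\wedge^2$, $\disc$, and the forgetful map to $\Pic(X)$ all commute with pullback $f^*$ because exterior powers and the universal discriminant formula do, while the monoid operations are functorial by Theorem A(ii) and its evident analogue for $\Disc$. That $\disc$ is a monoid homomorphism is the local identity
\[ d(S*T) = (st)^2 - 4(mt^2+ns^2-4nm) = (t^2-4n)(s^2-4m) = d(S)\,d(T), \]
which together with $\wedge^2(\scrS*\scrT)\simeq\wedge^2\scrS\otimes\wedge^2\scrT$ gives $\disc(\scrS*\scrT)\simeq\disc(\scrS)*\disc(\scrT)$; the left square commutes because $\disc$ carries $\Quad(X;\scrO_X)$ into $\Disc(X;\scrO_X)$.

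Next I would prove exactness of the two rows. Injectivity of the left-hand maps is automatic, as each is the inclusion of a subset of isomorphism classes. Surjectivity of the right-hand maps is witnessed by explicit objects: a line bundle $\scrL$ is $\wedge^2$ of the square-zero algebra $\scrO_X\oplus\scrL$ (with $\scrL\cdot\scrL=0$), and it underlies the zero discriminant $(0,\scrL)$. For exactness in the middle I would exploit the absorbing elements. In $\Disc(X)$ the class $(0,\scrO_X)$ is absorbing, with $(0,\scrO_X)*(d,\scrL)=(0,\scrL)$; so if $(d,\scrL)$ and $(d',\scrL')$ have isomorphic bundles, multiplying both by $(0,\scrO_X)$ yields $(0,\scrL)\simeq(0,\scrL')$, giving the required relation, while conversely any such relation forces the bundles to agree after applying the forgetful map. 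The same scheme works for $\Quad(X)$, with the dual numbers $\scrO_X[\eps]/(\eps^2)$ as absorbing element: setting $s=m=0$ in the product formula gives $w^2=0$, so $\scrS*\scrO_X[\eps]/(\eps^2)$ is Zariski-locally the dual numbers, a square-zero algebra with $\wedge^2=\wedge^2\scrS$; in both cases the reverse implication follows by applying the surjection to $\Pic(X)$.

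The one point needing genuine care — which I expect to be the main obstacle — is that $\scrS*\scrO_X[\eps]/(\eps^2)$ is determined up to isomorphism by $\wedge^2\scrS$ alone. One cannot read this off the discriminant, since in characteristic $2$ a quadratic algebra of discriminant $0$ need not be square-zero (an inseparable field extension is a counterexample). Instead I would observe that a quadratic algebra which is Zariski-locally isomorphic to $\scrO_X[\eps]/(\eps^2)$ has automorphism sheaf $\scrO_X^\times$ — the only algebra automorphisms of the dual numbers are $\eps\mapsto u\eps$ with $u\in\scrO_X^\times$ — so such algebras are classified by $H^1(X,\scrO_X^\times)=\Pic(X)$ through $\wedge^2$, whence $\scrS*\scrO_X[\eps]/(\eps^2)\simeq\scrO_X\oplus\wedge^2\scrS$ canonically. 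Consequently $\wedge^2\scrS\simeq\wedge^2\scrT$ implies $\scrS*\scrO_X[\eps]/(\eps^2)\simeq\scrT*\scrO_X[\eps]/(\eps^2)$, completing middle exactness of the top row.

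Finally I would check that the columns are Zariski locally surjective. Working on an affine open $U=\Spec A$ over which $\scrL$ is trivial, a discriminant becomes an element $d\in A$ that is a square modulo $4$, say $d\equiv t^2\pmod{4A}$ with $t^2-d=4n$; then $S=A[x]/(x^2-tx+n)$ is a free quadratic algebra with $d(S)=t^2-4n=d$ and trivial $\wedge^2$, so it lifts the given discriminant over $U$. This realizes every discriminant locally, for both $\Disc(X;\scrO_X)$ and $\Disc(X)$. The columns are in general not globally surjective, owing to the Picard and unit obstructions to lifting $t$ and trivializing $\scrL$ globally, which is precisely why only Zariski local surjectivity is asserted.
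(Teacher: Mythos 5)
Your overall route is the same as the paper's: reduce everything to Construction \ref{keycons} over affine opens, get commutativity and the homomorphism property from the local identities $d(S*T)=d(S)d(T)$ and $\tbigwedge^2(S*T)\simeq\tbigwedge^2 S\otimes\tbigwedge^2 T$, prove middle exactness of each row by multiplying through by the zero discriminant (resp.\ the dual numbers), and realize every discriminant locally as $R[x]/(x^2-tx+n)$ with $d=t^2-4n$. You supply considerably more detail than the paper, which proves the bottom row in Proposition \ref{prop:propexactdiscs}, cites Lemma \ref{lem:quaddiscsurj} for the columns, and dismisses the top row as ``the same (trivial) argument''; in particular you correctly isolate the one point that is not completely formal, namely that $\scrS*\scrO_X[\eps]/(\eps^2)$ depends only on $\tbigwedge^2\scrS$.

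However, your justification of that point contains an error. The automorphism sheaf of the dual numbers is not $\scrO_X^\times$ in general: an $R$-algebra automorphism of $R[\eps]/(\eps^2)$ sends $\eps\mapsto u\eps+a$ with $u\in R^\times$ and $a^2=2a=0$, and such $a\neq 0$ can exist, e.g.\ $a=2$ in $R=\Z/4\Z$ or any nilpotent in characteristic $2$. So the claimed classification of Zariski forms of the dual numbers by $H^1(X,\scrO_X^\times)=\Pic(X)$ is not justified as stated. The conclusion you need is nonetheless true and follows directly from the gluing in Lemmas \ref{lem:funct_isom} and \ref{lem:ref_glue}: taking $s=m=0$, $v=1$, $q=0$ there, the transition map $\phi*\psi$ sends $w\mapsto uw'+rs=uw'$, so the cocycle defining $\scrS*\scrO_X[\eps]/(\eps^2)$ is exactly the $\G_m$-valued cocycle $(u_{ij})$ defining $\tbigwedge^2\scrS$; hence $\scrS*\scrO_X[\eps]/(\eps^2)\simeq\scrO_X\oplus\tbigwedge^2\scrS$ with square-zero multiplication, and the forward implication of middle exactness of the top row follows. (One further terminological slip: $(0,\scrO_X)$ is absorbing in $\Disc(X;\scrO_X)$ but not in $\Disc(X)$, where $(0,\scrO_X)*(d,\scrL)=(0,\scrL)$; the computation you actually use is the correct one.)
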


By ``Zariski locally surjective columns'', we mean that there is an (affine) open cover of $X$ where (under pullback) the columns are surjective.  (Considering the corresponding sheaves over $X$, we also obtain a surjective map of sheaves; see Theorem \ref{thmb}.)

We now turn to describe the morphism $\Quad(X;\scrO_X) \to \Disc(X;\scrO_X)$.  For this purpose, we work locally and assume $X=\Spec R$ for a commutative ring $R$; we abbreviate $\Quad(\Spec R) = \Quad(R)$ and $\Quad(\Spec R;\scrO_{\Spec R}) = \Quad(R;R)$ and similarly with discriminants.

We would like to able to fit the surjective map $\Quad(R) \xrightarrow{\disc} \Disc(R)$ of monoids into an exact sequence by identifying its kernel, but unfortunately the fibers of this map vary over the codomain.  Instead, we will describe the action of a subgroup of $\Quad(R)$ on the fibers of the map $\disc$: this is a natural generalization, as the fibers of a group homomorphism are principal homogeneous spaces for the kernel $K$ and are noncanonically isomorphic as a $K$-set to $K$ with the regular representation.

Recalling the case of quadratic extensions of a field $F$ with $\opchar F=2$, for a commutative ring $R$ we define the \defi{Artin-Schreier group} $\AS(R)$ to be the additive quotient
\[ \AS(R)=\frac{R[4]}{\wp(R)[4]} \quad \text{where} \quad \wp(R)[4]=\{n=r+r^2 \in R : r \in R\} \cap R[4] \]
and $R[4]=\{a \in R : 4a=0\}$.  We have a map $i:\AS(R) \to \Quad(R;R) \hookrightarrow \Quad(R)$ sending the class of $n \in \AS(R)$ to the isomorphism class of the algebra $S=R[x]/(x^2-x+n)$.  
The group $\AS(R)$ is an elementary abelian $2$-group since $2R[4] \subseteq \wp(R)$.

Our next main result is as follows (Theorem \ref{thmc}).

\begin{thmc*}
The fibers of the map $\disc:\Quad(R) \to \Disc(R)$ have a unique action of the group $\AS(R)$ compatible with the inclusion of monoids $\AS(R) \hookrightarrow \Quad(R)$.  Moreover, the kernel of this action on the fiber $\disc^{-1}(d R^{\times 2})$ contains $\ann_R(d)[4]$.
\end{thmc*}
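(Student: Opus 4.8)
The plan is to realize the claimed action as multiplication inside the monoid $(\Quad(R),*)$ by the image $i(\AS(R))$, and to extract every assertion from the explicit composition law of Construction~\ref{keycons} together with the fact (Theorem~B) that $\disc\colon\Quad(R)\to\Disc(R)$ is a homomorphism of commutative monoids. For $\alpha\in\AS(R)$ and $[S]\in\Quad(R)$ I set $\alpha\cdot[S]:=i(\alpha)*[S]$. To see this preserves the fibers of $\disc$, observe that $i(\alpha)$ is represented by $R[x]/(x^2-x+n)$ with $n\in R[4]$, so its discriminant is $1-4n=1$ and $\disc(i(\alpha))$ is the identity class $(1,R)$ of $\Disc(R)$; since $\disc$ is a monoid homomorphism, $\disc(\alpha\cdot[S])=\disc(i(\alpha))*\disc([S])=\disc([S])$, and $\cdot$ restricts to each fiber $\disc^{-1}(\delta)$.

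I would then verify the group-action axioms. The neutral element of $\AS(R)$ maps to $i(0)=[R\times R]$, the identity of $*$, so it acts trivially. Applying Construction~\ref{keycons} to $i(n)=R[x]/(x^2-x+n)$ and $i(m)=R[y]/(y^2-y+m)$ (so $t=s=1$) produces $R[w]/(w^2-w+(n+m-4nm))$; as $n,m\in R[4]$ we have $4nm=0$, whence $i(n)*i(m)=i(n+m)$. With associativity of $*$ this yields $(\alpha+\beta)\cdot[S]=\alpha\cdot(\beta\cdot[S])$, and since $\AS(R)$ is $2$-torsion each $i(\alpha)$ is its own inverse, so $\alpha\cdot(-)$ is a bijection of each fiber. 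The substantive point is that $\cdot$ descends to $\AS(R)=R[4]/\wp(R)[4]$, i.e.\ that $i$ annihilates $\wp(R)[4]$; this is the well-definedness of $i$ asserted before the statement, and concretely $x^2-x+(r+r^2)$ has the root $x=r+2r^2$ whenever $4(r+r^2)=0$, so $i(r+r^2)=[R\times R]$.

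For uniqueness I would read ``compatible with the inclusion of monoids'' as the requirement that the action be equivariant for the self-multiplication of $\Quad(R)$ and normalized so as to reproduce $i$ at the identity, that is $\alpha\cdot(q*q')=q*(\alpha\cdot q')$ and $\alpha\cdot[R\times R]=i(\alpha)$. Any action with these two properties satisfies $\alpha\cdot[S]=\alpha\cdot([R\times R]*[S])=(\alpha\cdot[R\times R])*[S]=i(\alpha)*[S]$, so it coincides with the action constructed above; this gives uniqueness.

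Finally, to bound the kernel on a fixed fiber $\disc^{-1}(dR^{\times 2})$, I would write $S$ locally as $R[y]/(y^2-sy+m)$, of discriminant $d(S)=s^2-4m$, and apply Construction~\ref{keycons} with first factor $i(n)$ (so $t=1$):
\[ i(n)*S=R[w]/\bigl(w^2-sw+(m+n(s^2-4m))\bigr)=R[w]/\bigl(w^2-sw+(m+n\,d(S))\bigr); \]
thus multiplication by $i(n)$ shifts the norm by $n\cdot d(S)$ and leaves $s$, hence $\tbigwedge^2 S$, unchanged. If $[S]\in\disc^{-1}(dR^{\times 2})$ then $d(S)\in dR^{\times 2}$, so for $n\in\ann_R(d)[4]$ we get $n\cdot d(S)=0$ and $i(n)*S=S$ with identical structure constants; as $i(n)$ is free with $\tbigwedge^2 i(n)\simeq R$, these local identities are canonical and glue to $i(n)*[S]=[S]$ over $\Spec R$, so $\ann_R(d)[4]$ lies in the kernel. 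I expect the main obstacle to be exactly this gluing, together with the descent to $\AS(R)$: the composition law and the norm-shift are computed for \emph{free} algebras over an affine base, so one must check that the resulting isomorphisms are natural enough to descend when $S$ is only locally free and $\tbigwedge^2 S$ is a nontrivial line bundle, so that $d(S)$ lives in $((\tbigwedge^2 S)^{\vee})^{\otimes 2}$ rather than in $R$; once this naturality is secured, the rest is bookkeeping with the explicit multiplication tables.
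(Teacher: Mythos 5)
Your proposal is correct and follows essentially the same route as the paper: the action is realized as $*$-multiplication by $i(\AS(R))$, with the fiber-preservation, group axioms, and the kernel statement all extracted from the explicit formula of Construction~\ref{keycons} (in particular $ns^2=n\,d(S)$ when $4n=0$) together with Proposition~\ref{prop:ASiwelldef}; the paper merely organizes this by first acting on based algebras and describing the orbits, and states its Theorem~\ref{thmc} for $\Quad(R;R)$, which sidesteps the gluing issue you flag at the end.
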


Roughly speaking, Theorems B and C together say that ``a quadratic algebra is determined by its Steinitz class and its discriminant, locally up to an Artin-Schreier extension''.  These theorems could be rephrased in terms of the Grothendieck group; however, due to the existence of an absorbing element, the group $K_0(\Quad(X))$ is trivial for all schemes $X$.  

The article is organized as follows.  In section 1, we briefly review the relevant notions from monoid theory.  In section 2, we consider the monoid of discriminants; in section 3 we define the monoid of quadratic $R$-algebras and prove Theorems A and B.  In section 4 we prove Theorem C.

The author would like to thank Asher Auel, Manjul Bhargava, James Borger, and Melanie Wood for helpful suggestions.  The author is also indebted to the anonymous referee and Owen Biesel for very detailed and helpful comments and corrections.  The author was supported by an NSF CAREER Award (DMS-1151047).

\section{Monoids}

To begin, we review standard terminology for monoids.  A reference for the material in this section is Bergman \cite[Chapter 3]{Bergman}; more generally, see Burris--Sankappanavar \cite{BurrisSank} and McKenzie--McNulty--Taylor \cite{McMcT}.

A \defi{semigroup} is a nonempty set $A$ equipped with an associative binary operation 
\[ *:A \times A \to A.  \]
 A \defi{monoid} is a semigroup with identity element $1$ for $*$ (necessarily unique).  Any semigroup without $1$ can be augmented to a monoid.  Natural examples of monoids abound: the natural numbers $\mathbb{N}=\Z_{\geq 0}$ under addition, a ring $R$ under its multiplication, and the set of endomorphisms of an algebraic object (such as a variety) under composition.  A \defi{group} is a monoid equipped with an inverse map ${}^{-1}:A \to A$.

Let $A$ be a semigroup.  We say $A$ is \defi{commutative} if $xy = yx$ for all $x,y \in A$.  An \defi{absorbing element} of $A$ is an element $0 \in A$ such that $0x = x0 = 0$ for all $x \in A$; a monoid has at most one absorbing element.  Multiplicative notation for $A$ will be in general more natural for us; however, we will occasionally write $A$ additively with operation $+$, in which case the identity element is denoted $0$ and to avoid confusion $A$ will have no absorbing element.  An element $x \in A$ is (left) \defi{cancellative} if $xy=xz$ implies $y=z$ for all $y,z \in A$.

A \defi{homomorphism} of semigroups is a map $f:A \to B$ such that $f(xy)=f(x)f(y)$ for all $x,y \in A$, and a homomorphism of monoids is a homomorphism of semigroups such that $f(1_A)=1_B$.  

Let $f:A \to B$ be a homomorphism of monoids.  Unlike groups, the kernel $\ker f=\{x \in A : f(x)=1\}$ of a monoid homomorphism does not determine the structure of the image of $f$; instead, we define the \defi{kernel congruence} of $f$ by
\[ K_f=\{(x,y): f(x)=f(y)\} \subseteq A \times A. \]
The set $K_f$ defines a \defi{congruence} on $A$, an equivalence relation compatible with the operation on $A$, i.e., if $(x,y),(z,w) \in K_f$ then $(xz,yw) \in K_f$.  Conversely, given a congrence $K$ on a monoid $A$, the set operation $[x]\cdot [y] = [x\cdot y]$ on equivalence classes $[x],[y] \in A/K$ is well-defined and the quotient map $A \to A/K$ via $x \mapsto [x]$ is a surjective homomorphism of monoids with kernel $K$; any homomorphism $f:A \to B$ with $K_f \supseteq K$ factors through $A \to A/K$.  

The image $f(A)=\{f(x) : x \in A\}$ is a submonoid of $B$, but if $A,B$ are noncommutative, then not every submonoid is eligible to be the kernel of a homomorphism (just as not every subgroup is normal).  As we will be interested only in commutative monoids, and this assumption simplifies the presentation, suppose from now on that $A,B$ are commutative.  Then the set 
\[ I_f = \{(z,w) : f(x) z = f(y) w \text{ for some $x,y \in A$}\} \subseteq B \times B, \]
is a congruence called the \defi{image congruence}.  (Without the hypothesis of commutativity, $I_f$ is a relation that is reflexive and symmetric, but not necessarily transitive nor a congruence; if $A,B$ are possibly nonabelian groups, then $I_f$ is transitive and is a congruence if and only if $f(A)$ is a normal subgroup of $B$.)  Note that if $0 \in f(A)$ then $I_f=B \times B$.  We write $B/f(A) = B/I_f$.

A sequence
\begin{equation} \label{eq:exactABC}
A \xrightarrow{f} B \xrightarrow{g} C 
\end{equation}
is \defi{exact} if $f$ is injective, $g$ is surjective, and $K_g=I_f$, i.e.
\begin{center}
$g(z)=g(w)$ if and only if there exists $x,y \in A$ such that $xz=yw$;
\end{center}
equivalently, (\ref{eq:exactABC}) is exact if $f$ is injective and $g$ induces an isomorphism $B/f(A)=B/I_f \xrightarrow{\sim} C$.  A sequence of groups (\ref{eq:exactABC}) is exact as a sequence of groups if and only if it is exact as a sequence of monoids.

\begin{rmk}
We will not make use of long exact sequences of monoids here nor write the customary $0$ or $1$ at the ends of our short exact sequences.  Indeed, the straightforward extension of the notion from groups to monoids using the definitions above (kernel congruence equals image congruence) has the following defect: the sequence 
\[ \NN \xrightarrow{f} \Z \xrightarrow{j} 0 \] 
of monoids under addition has $I_f=\Z \times \Z=K_j$ even though $f$ is not surjective.  (The map $f$ is, however, an epimorphism in the category of monoids.)
\end{rmk}

We will also have use of sheaves of monoids over a scheme $X$.  A sequence $\scrA \xrightarrow{f} \scrB \xrightarrow{g} \scrC$ of sheaves of monoids is \defi{exact} if the sheaf associated to the presheaf $U \mapsto \scrB(U)/f(\scrA(U))$ is isomorphic to $\scrC$, or equivalently if the induced sequence $\scrA_x \xrightarrow{f_x} \scrB_x \xrightarrow{g_x} \scrC_x$ of monoid stalks is exact for all $x \in X$.  

Like the formation of the integers from the natural numbers, one can construct the Grothendieck group $K_0(A)$ of a commutative monoid $A$, with the universal property that for any monoid homomorphism $A \to G$ with $G$ an abelian group, there exists a unique group homomorphism $K_0(A) \to G$ such that the diagram
\[
\xymatrix{
A \ar[r] \ar[d] & G \\
K_0(A) \ar@{-->}^{\exists !}[ru]
} \]
commutes.  The group $K_0(A)$ is constructed as $A \times A$ under the equivalence relation $(x,x') \sim (y,y')$ if there exists $z \in A$ such that $xy'z = x'yz$.  Note that if $A$ has an absorbing element $0$ then $K_0(A)=\{0\}$.  The set of cancellative elements $A\sbreg$ is the largest submonoid of $A$ which can be embedded in a group, and the smallest such containing group is the Grothendieck group $K_0(A\sbreg)$.  If $\scrA$ is a sheaf of monoids, we write $\scrK_0(\scrA)$ for the sheaf of abelian groups associated to the presheaf $U \mapsto K_0(\scrA(U))$.

\section{Discriminants} \label{sec:discriminants}

In this section, we define discriminants for quadratic rings over general schemes (Definition \ref{def:defndisc}); for a discussion of discriminant modules overlapping the one presented here, see Knus \cite[\S III.3]{Knus} and Loos \cite[\S 1.2]{LoosMan}.  We also relate semi-nondegenerate quadratic forms on line bundles by their images (Lemma \ref{lem:quadformsideals}) and factor the monoid of discriminants over the Picard group (Proposition \ref{prop:propexactdiscs}).

Let $X$ be a scheme.  A \defi{quadratic form} over $X$ is a pair $(\scrM,Q)$ where $\scrM$ is a locally free $\scrO_X$-module of finite rank and $Q:\scrM \to \scrO_X$ is a \defi{quadratic map}, i.e., for all open sets $U \subseteq X$, we have
\begin{enumroman}
\item $Q(rx)=r^2Q(x)$ for all $r \in \scrO_X(U)$ and $x \in \scrM(U)$; and
\item The map $T:\scrM(U) \times \scrM(U) \to \scrO_X(U)$ defined by
\[ T(x,y) = Q(x+y)-Q(x)-Q(y) \]
is $\scrO_X(U)$-bilinear; we call $T$ the \defi{associated bilinear form}.
\end{enumroman}

An \defi{isometry} between quadratic forms $Q:\scrM \to \scrO_X$ and $Q':\scrM' \to \scrO_X$ is an $\scrO_X$-module isomorphism $f:\scrM \xrightarrow{\sim} \scrM'$ such that $Q' \circ f = Q$.  A \defi{similarity} between quadratic forms $Q,Q'$ is a commutative square:
\[
\xymatrix{ 
\scrM \ar[r]^{Q} \ar[d]^{f}_{\wr} & \scrO_X \ar[d]_g^{\wr} \\
\scrM' \ar[r]^{Q'} & \scrO_X
} \]
and so an isometry is just a similarity with $g=\id$.

%

A quadratic form $(\scrM,Q)$ is also equivalently specified by $\scrM$ and an $\scrO_X$-module homomorphism $Q:\Sym_2 \scrM \to \scrO_X$, or a global section 
\[ Q \in \Hom(\Sym_2 \scrM, \scrO_X) = (\Sym_2 \scrM)\spcheck \simeq \Sym^2 (\scrM\spcheck). \] 
(Here, $\Sym^2 \scrM$ denotes the second symmetric power of $\scrM$ and $\Sym_2 \scrM$ the submodule of symmetric second tensors of $\scrM$.)  

A quadratic form $Q:\scrM \to \scrO_X$ with associated bilinear form $T:\scrL \to \scrL \to \scrO_X$ induces a homomorphism of $\scrO_X$-modules $\scrM \to \scrM\spcheck$ defined by
\begin{align*}
\scrM(U) &\to \scrM\spcheck(U) \\
y &\mapsto (x \mapsto T(x,y))
\end{align*}
Following Knus \cite[(I.3.2)]{Knus}, we say $Q:\scrM \to \scrO_X$ is \defi{nondegenerate} if the associated map $\scrM \to \scrM\spcheck$ is injective and \defi{nonsingular} (or \defi{regular}) if the associated map $\scrM \to \scrM\spcheck$ is an isomorphism.  

Now let $\scrL$ be an invertible $\scrO_X$-module (i.e., locally free of rank $1$).  A quadratic form on $\scrL$, the case of our primary concern, is a quadratic map $Q:\scrL \to \scrO_X$, but it is given equivalently by a global section $q \in \Sym^2 (\scrL\spcheck) \simeq (\scrL\spcheck)^{\otimes 2}$.  Because will use this identification frequently, we make it explicit.  The identification is defined locally on $X$, so suppose $X=\Spec R$ and $L$ is a free module of rank $1$ over $R$.  Then $\Sym^2(L\spcheck) \simeq (L\spcheck)^{\otimes 2} \simeq (L^{\otimes 2})\spcheck$.  
Suppose $q \in \Sym^2(L\spcheck)$, so $q:L \otimes L \to R$ is an $R$-module homomorphism.  Let $L=Re$ for some $e \in L$ and define the quadratic map $Q:L \to R$ by $Q(re)=r^2 q(e \otimes e)$; this definition is independent of the choice of $e$.  Conversely, if $Q:L \to R$ is a quadratic map, then again letting $L=Re$ we define the $R$-module homomorphism $q:L \otimes L \to R$ by $q(e \otimes e) = Q(e)$.   

\begin{rmk}
One must remember the domain $\scrL$ in this identification.  Indeed, if $i:\scrT^{\otimes 2} \simeq \scrO_X$ is an isomorphism of $\scrO_X$-modules, so that $\scrT \in \Pic(X)[2]$, then $i$ defines a quadratic form $I:\scrT \to \scrO_X$, called a \defi{neutral form}, giving rise to an isomorphism 
\[ (\scrL\spcheck)^{\otimes 2} \xrightarrow{\sim}((\scrL \otimes \scrT)\spcheck)^{\otimes 2}. \]
\end{rmk}

The notions of \emph{nondegenerate} and \emph{nonsingular} can be made quite explicit for quadratic forms of rank $1$.  These conditions are local, so let $Q:L \to R$ be a quadratic form with $L=Re$.  Then $Q$ is uniquely specified by the element $Q(e)=a \in R$, and the associated bilinear form is specified by $T(e,e)=2a$.  A different choice of basis $e'=ue$ gives $Q(e')=u^2Q(e)=au^2$ with $u \in R^\times$.  We find that $Q$ is nondegenerate if and only if $2a$ is a nonzerodivisor and regular if $2a \in R^\times$, and these are well-defined.  We refine these notions (as with the half-discriminant \cite[(IV.3.1.3)]{Knus}) by saying that $Q$ is \defi{semi-nondegenerate} if $a$ is a nonzerodivisor and \defi{semi-nonsingular} (or \defi{semi-regular}) if $a$ is a unit; and we extend these notions globally to a quadratic form $Q:\scrL \to \scrO_X$ if they hold on an affine open cover.

Given two quadratic forms $Q:\scrL \to \scrO_X$ and $Q':\scrL' \to \scrO_X$, corresponding to $q \in (\scrL\spcheck)^{\otimes 2}$ and $q' \in ({\scrL'}\spcheck)^{\otimes 2}$, from the element 
\[ q \otimes q' \in (\scrL\spcheck)^{\otimes 2} \otimes ({\scrL'}\spcheck)^{\otimes 2} \simeq ((\scrL \otimes \scrL')\spcheck)^{\otimes 2} \]
we define the corresponding tensor product $Q \otimes Q' : \scrL \otimes \scrL' \to \scrO_X$: following the identification above, over $X=\Spec R$ where $L=Re$ and $L'=Re'$, then $L \otimes L' = R(e \otimes e')$ and $(Q \otimes Q')(e \otimes e')=Q(e)Q(e')$.
The tensor product gives the set of similarity classes of quadratic forms of rank $1$ over $X$ the structure of commutative monoid.

\begin{rmk}
The definition of the tensor product of quadratic forms is more subtle in general for forms of arbitrary rank; here we find the correct notion because we can think of rank $1$ quadratic forms as rank $1$ symmetric bilinear forms.
\end{rmk}

\begin{defn}
Let $Q:\scrL \to \scrO_X$ be a (rank 1) quadratic form.  We say $Q$ is \defi{cancellative} if it is cancellative in the monoidal sense, as $q \in (\scrL\spcheck)^{\otimes 2}$: if $q' \in ((\scrL')\spcheck)^{\otimes 2}$ and $q'' ((\scrL'')\spcheck)^{\otimes 2}$ have $q \otimes q'$ similar to $q \otimes q''$, then $q'$ is similar to $q''$.  

We say $Q$ is \defi{locally cancellative} if for all $x \in X$ there exists an affine open neighborhood $U \ni x$ such that $Q|_U$ is cancellative.  
\end{defn}

\begin{prop} \label{prop:loccanseminon}
A (rank 1) quadratic form $Q:\scrL \to \scrO_X$ is locally cancellative if and only if $Q$ is semi-nondegenerate.

Moreover, a locally cancellative rank 1 quadratic form $Q$ over $X$ is cancellative.  If $X$ is affine then $Q$ is cancellative if and only if it is locally cancellative.
\end{prop}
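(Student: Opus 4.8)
The plan is to reduce the proposition to two local-to-global lemmas, after first recording a reformulation that makes everything transparent. Writing $q \in (\scrL\spcheck)^{\otimes 2} = \Hom(\scrL^{\otimes 2},\scrO_X)$ for the section corresponding to $Q$, I would first observe that $Q$ is \emph{semi-nondegenerate if and only if} $q\colon \scrL^{\otimes 2} \to \scrO_X$ \emph{is injective}. This is local, and over $\Spec R$ with $\scrL$ free and $Q(e)=a$ the map $q$ is multiplication by $a$, injective precisely when $a$ is a nonzerodivisor, matching the definition of semi-nondegeneracy. I keep this criterion in play throughout. With it, I would isolate two claims: \textbf{(A)} for an arbitrary scheme $X$, if $Q$ is semi-nondegenerate then $Q$ is cancellative; and \textbf{(B)} for $X=\Spec R$ affine, if $Q$ is cancellative then $Q$ is semi-nondegenerate. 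Granting these, all three assertions follow formally: both ``locally cancellative'' and ``semi-nondegenerate'' are checkable on affine opens, so (A) and (B) give their equivalence (the first assertion); the second is then locally cancellative $\Rightarrow$ semi-nondegenerate $\Rightarrow$ (by (A)) cancellative; and for affine $X$ the remaining implication, cancellative $\Rightarrow$ semi-nondegenerate $\Rightarrow$ locally cancellative, follows from (B) together with the first assertion.

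To prove (A), suppose $Q\otimes Q'$ is similar to $Q\otimes Q''$, realized by an isomorphism $h\colon \scrL\otimes\scrL' \to \scrL\otimes\scrL''$ together with multiplication by a global unit $u$. Contracting against $\scrL$ via the canonical isomorphism $\scrL\spcheck \otimes \scrL \simeq \scrO_X$ turns $h$ into an isomorphism $h'\colon \scrL' \to \scrL''$, which locally is just $\id_{\scrL}\otimes h'$. It then suffices to check $u\,Q' = Q''\circ h'$, and this is local: on a free affine piece with value $a$, which is a nonzerodivisor by semi-nondegeneracy, the given similarity reads $u\,a\,Q'(x) = a\,Q''(h'(x))$, and cancelling the nonzerodivisor $a$ yields $u\,Q'(x) = Q''(h'(x))$. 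Holding on a cover, this identity holds globally, so $Q'$ is similar to $Q''$.

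To prove (B) I argue contrapositively. If $Q$ is not semi-nondegenerate, then by the criterion $q\colon \scrL^{\otimes 2} \to \scrO_X$ is not injective, and here --- crucially --- affineness lets me choose an actual nonzero element $q'' \in \ker q \subseteq \scrL^{\otimes 2} = ((\scrL\spcheck)\spcheck)^{\otimes 2}$, defining a nonzero quadratic form $Q''$ on $\scrL\spcheck$. Taking $Q'=0$ to be the zero form on $\scrL\spcheck$, both $Q\otimes Q'$ and $Q\otimes Q''$ are the zero form on $\scrL\otimes\scrL\spcheck \simeq \scrO_X$ --- indeed $q\otimes q''$ is the contraction $q(q'')=0$ --- so they are trivially similar, whereas $Q'$ and $Q''$ are not similar since $q''\neq 0$. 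Hence $Q$ is not cancellative.

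I expect (B) to carry the real content, and in particular to be where the affineness hypothesis genuinely enters: on a general scheme $\ker q$ can be a nonzero sheaf with no nonzero global section, so this counterexample cannot be built, which is exactly why the converse is asserted only for affine $X$. The most error-prone bookkeeping, by contrast, sits in (A) --- matching the abstract contraction $h\mapsto h'$ with its local description $\id_{\scrL}\otimes h'$ and tracking the unit $u$ correctly through the similarity square --- but that part is routine once the local value $a$ is known to be a nonzerodivisor.
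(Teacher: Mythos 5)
Your proof is correct, and it reorganizes the argument in a way that differs from the paper's while relying on the same underlying computations (the local dictionary $Q\leftrightarrow a=Q(e)$, cancellation of the nonzerodivisor $a$, and a zerodivisor counterexample). The paper proves the equivalence ``semi-nondegenerate $\Leftrightarrow$ locally cancellative'' by a direct computation over $\Spec R$ with all line bundles free, then establishes ``locally cancellative $\Rightarrow$ cancellative'' by cancelling in $\Pic(X)$ to reduce to $\scrL'=\scrL''$ and gluing the locally obtained units into a global unit; the affine converse is dispatched by taking $U=X$. You instead isolate two lemmas --- (A) semi-nondegenerate implies cancellative over any base, and (B) cancellative implies semi-nondegenerate over an affine base --- after recasting semi-nondegeneracy as injectivity of the sheaf map $q:\scrL^{\otimes 2}\to\scrO_X$. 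That reformulation is correct and does real work: since injectivity of a map of quasi-coherent sheaves is a stalkwise condition, semi-nondegeneracy automatically restricts to arbitrary opens and is checkable on any affine cover, which is what makes your formal derivation of all three assertions from (A) and (B) go through. Your globalization in (A), contracting $h$ to $h'$ via $\scrL\spcheck\otimes\scrL\simeq\scrO_X$ and verifying $uQ'=Q''\circ h'$ locally by cancelling the nonzerodivisor $a$, is a cleaner substitute for the paper's unit-gluing; and your (B) builds the non-cancellation witness from a nonzero global section of $\ker q$, correctly locating where affineness is genuinely needed (on a general scheme $\ker q$ may have no nonzero global sections, consistent with the paper's remark on Kleiman's example). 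The paper's version buys complete explicitness in coordinates; yours buys a tighter logical decomposition and a globalization step with less bookkeeping.
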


\begin{proof}
Both properties are local, so it suffices to check this over a ring $X=\Spec R$ such that the quadratic forms involved are free.  To a quadratic form $Q:L=Re \to R$, we have $Q(e)=a \in R$.  If $Q',Q''$ are similarly other rank $1$ quadratic forms with $Q(e')=a' \in R$ and $Q(e'')=a'' \in R$, then $(Q \otimes Q')(e \otimes e')=aa'$ and $(Q\otimes Q'')(e \otimes e'')=aa''$.   We have $Q' \sim Q''$ if and only if there exists $u \in R^\times$ such that $a'=ua''$, and similarly $Q \otimes Q' \sim Q \otimes Q''$.  

Thus, if $Q$ is semi-nondegenerate, then $a$ is a nonzerodivisor, so $Q \otimes Q' \sim Q \otimes Q''$ implies $aa'=uaa''$ implies $a'=ua''$ implies $Q' \sim Q''$, so $Q$ is locally cancellative.  Conversely, if $Q$ is locally cancellative and $a$ is a zero divisor, with $aa'=0$ and $a' \neq 0$, then taking $Q'(e')=a'$ and $Q''(e'')=0$ we have $Q \otimes Q' \sim Q \otimes Q''$ so $Q' \sim Q''$ and thus there exists $u \in R^\times$ such that $a'=u(0)=0$, a contradiction.  

Now for the second statement.  Let $Q:\scrL \to \scrO_X$ be locally cancellative.  By the previous paragraph, $Q$ is semi-nondegenerate.  Suppose that $Q \otimes Q' \sim Q \otimes Q''$ for rank $1$ quadratic forms $Q',Q''$; we will show that $Q' \sim Q''$.  Cancelling in $\Pic(X)$, we may assume without loss of generality that $\scrL'=\scrL''$.  Let $U=\Spec R$ be an affine open subset of $X$ in which all of $\scrL|_U=L=Re$ is free and similarly $L'=Re'=L''=Re''$.  Let $a=Q(e)$ and similarly $a'=Q'(e')$ and $a''=Q''(e'')$, so as in the previous paragraph we are given (a unique) $u \in R^\times$ such that $aa'=uaa''$.  Since $Q$ is locally cancellative, we have $a'=ua''$, and this defines a similarity $Q' \sim Q''$.  Repeating this on an open cover, the elements $u$ glue to give an element $g \in \scrO_X^\times$, and so together with the identity map on $\scrL'=\scrL''$ we therefore have a similarity $Q' \sim Q''$.

The converse in the final statement follows immediately by taking $U=X$ if $X$ is affine.
\end{proof}

The following corollary is then immediate.

\begin{cor}
The subset of locally cancellative quadratic forms over $X$ is a submonoid of the monoid of rank $1$ quadratic forms over $X$.
\end{cor}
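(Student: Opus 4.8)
The plan is to use Proposition~\ref{prop:loccanseminon} to trade the monoidal condition of being locally cancellative for the concrete pointwise condition of being semi-nondegenerate, and then to verify the two submonoid axioms — that the identity is semi-nondegenerate, and that semi-nondegeneracy is preserved under tensor product — directly from the local description of a rank $1$ form by a single ring element.

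First I would check that the identity element of the monoid lies in the subset. The identity is the neutral form on $\scrL = \scrO_X$ corresponding to $1 \in (\scrO_X\spcheck)^{\otimes 2} \simeq \scrO_X$; over any affine open $\Spec R$ with the trivialization $\scrO_X = Re$ it is represented by the element $a = 1 \in R$, which is a nonzerodivisor. Hence the identity is semi-nondegenerate, and therefore locally cancellative by Proposition~\ref{prop:loccanseminon}.

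The substance is closure under tensor product. Given locally cancellative forms $Q:\scrL \to \scrO_X$ and $Q':\scrL' \to \scrO_X$, Proposition~\ref{prop:loccanseminon} makes both semi-nondegenerate. Semi-nondegeneracy is checked on an affine open cover, so I would pass to a common affine open $U = \Spec R$ refining the two given covers, on which $\scrL|_U = Re$ and $\scrL'|_U = Re'$ are free with $Q(e) = a$ and $Q'(e') = a'$ both nonzerodivisors. By the explicit formula for the tensor product, $(Q \otimes Q')(e \otimes e') = a a'$, and a product of nonzerodivisors in $R$ is again a nonzerodivisor; covering $X$ by such opens shows $Q \otimes Q'$ is semi-nondegenerate, hence locally cancellative. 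There is no real obstacle here — the entire content has been packaged into Proposition~\ref{prop:loccanseminon} and the elementary fact that the nonzerodivisors of a ring form a multiplicative set containing $1$ — and the only point needing attention is the harmless refinement of covers so that the local value formulas for $Q$ and $Q'$ apply on the same patch.
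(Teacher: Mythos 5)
Your proof is correct and is exactly the argument the paper has in mind: the paper states the corollary as ``immediate'' from Proposition~\ref{prop:loccanseminon}, and your filling-in --- identifying locally cancellative with semi-nondegenerate, then noting that $1$ is a nonzerodivisor and that the local value of $Q\otimes Q'$ is the product $aa'$ of nonzerodivisors --- is the intended verification. The refinement to a common affine cover is the right (and only) point of care, and you handle it correctly.
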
 

\begin{rmk}
The global notion of cancellative is not as robust as one may like.  Kleiman \cite{Kleiman} gives an example of a scheme $X$ and a global section $t \in \scrO_X(X)$ that is a nonzerodivisor such that it becomes a zerodivisor in an affine open $t|_U \in \scrO_X(U)$.  
\end{rmk}

The similarity class of a locally cancellative quadratic form is determined by its image (``effective Cartier divisors on a scheme are the same as invertible sheaves with a choice of regular global section'' \cite[Tag 01X0]{stacks-project}), as follows.  

\begin{lem} \label{lem:quadformsideals}
There is a (functorial) isomorphism of commutative monoids
\begin{align*}
\left\{ \begin{minipage}{36ex} 
\begin{center}
\textup{Similarity classes of rank $1$ \\[0.5ex]
locally cancellative quadratic forms \\[0.5ex]
$Q:\scrL \to \scrO_X$ \\[0.5ex]
modulo neutral forms}
\end{center} 
\end{minipage}
\right\} & \xrightarrow{\sim} \left\{ \begin{minipage}{25ex} 
\begin{center}
\textup{Locally free ideal \\[0.5ex] sheaves 
$\scrI \subseteq \scrO_X$ \\[0.5ex]
such that $[\scrI] \in 2\Pic(X)$}
\end{center} 
\end{minipage}
\right\}
\end{align*}
where the similarity class of a quadratic form $Q:\scrL \to \scrO_X$ maps to the ideal $\scrI$ of $\scrO_X$ generated by the values $Q(\scrL)$.  
\end{lem}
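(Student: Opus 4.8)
The plan is to factor the asserted bijection through an intermediate object: pairs $(\scrM, q)$ consisting of an invertible $\scrO_X$-module $\scrM$ that is a square in $\Pic(X)$ together with a regular (nonzerodivisor) global section $q$ of $\scrM$, taken up to isomorphism of such pairs. The correspondence between these pairs and locally free ideal sheaves is exactly the content of \cite[Tag 01X0]{stacks-project}, so the real work is to match our similarity-classes-modulo-neutral-forms with isomorphism classes of pairs.

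First I would reformulate the left-hand side. A rank $1$ quadratic form $Q:\scrL \to \scrO_X$ is the same datum as a global section $q$ of $\scrM := (\scrL\spcheck)^{\otimes 2}$, equivalently an $\scrO_X$-module map $q:\scrL^{\otimes 2} \to \scrO_X$. By Proposition \ref{prop:loccanseminon}, $Q$ is locally cancellative precisely when it is semi-nondegenerate, i.e.\ when $q$ is a regular section; in that case $q$ is injective and restricts to an isomorphism $\scrL^{\otimes 2} \xrightarrow{\sim} \scrI$ onto the image ideal $\scrI = \operatorname{im}(q)$. This already proves well-definedness of the map: $\scrI$ is invertible, hence a locally free ideal sheaf, and $[\scrI] = 2[\scrL] \in 2\Pic(X)$.

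Next I would account for the two equivalence relations on the left. A similarity $(f,u)$ from $Q$ to $Q'$, with $f:\scrL \xrightarrow{\sim} \scrL'$ and $u \in \scrO_X^\times$, induces the isomorphism $u\cdot(f\spcheck)^{\otimes 2}:\scrM \xrightarrow{\sim} \scrM'$ carrying $q$ to $q'$, so similar forms yield isomorphic pairs (and in particular the same image ideal). Tensoring $Q$ by a neutral form $I:\scrT \to \scrO_X$, with $\scrT \in \Pic(X)[2]$ and its trivialization $\scrT^{\otimes 2} \simeq \scrO_X$, replaces $\scrL$ by $\scrL \otimes \scrT$ but leaves $(\scrM,q)$ unchanged up to the canonical isomorphism induced by the trivialization. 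Conversely, an arbitrary isomorphism $\scrM \simeq \scrM'$ between two squares need not be of the form $(f\spcheck)^{\otimes 2}$: two square roots of a fixed invertible sheaf differ by a class in $\Pic(X)[2]$, and a trivialization of that difference is exactly a neutral form. Thus passing to similarity classes modulo neutral forms coincides with passing to isomorphism classes of pairs $(\scrM,q)$.

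Finally I would conclude. By \cite[Tag 01X0]{stacks-project} the assignment $(\scrM,q) \mapsto \operatorname{im}(q)$ is a bijection onto locally free ideal sheaves, and restricting to square $\scrM$ matches the condition $[\scrI] = [\scrM\spcheck] \in 2\Pic(X)$; surjectivity is witnessed by choosing, for a given $\scrI$ with $[\scrI] \in 2\Pic(X)$, a square root $\scrL$ with $\scrL^{\otimes 2} \simeq \scrI$ together with the section induced by $\scrI \hookrightarrow \scrO_X$. That the bijection respects the monoid structures is local: over $X = \Spec R$ with $\scrL = Re$ we have $Q(e) = a$ and image $(a)$, while $Q \otimes Q'$ takes the value $aa'$ with image $(a)(a') = (aa')$, and the neutral (identity) form maps to the unit ideal $\scrO_X$. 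Functoriality is the naturality of the image-ideal construction under pullback. The main obstacle is the reconciliation in the third paragraph—verifying that the quotient by neutral forms matches exactly the forgetting of the chosen square root, so that the Stacks bijection descends to our quotient; this rests on the claim that the gap between a general isomorphism of squares and a squared isomorphism is measured by a $2$-torsion line bundle equipped with a trivialization, which is precisely the datum of a neutral form.
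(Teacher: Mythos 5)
Your proposal is correct and follows essentially the same route as the paper: both rest on Proposition \ref{prop:loccanseminon} to identify locally cancellative forms with regular sections $q:\scrL^{\otimes 2}\to\scrO_X$ (hence isomorphisms onto their image ideals), and both observe that tensoring by a neutral form absorbs the choice of square root of $\scrI$ while a global unit accounts for the similarity. The only difference is organizational: you delegate the final ``pairs with regular section $\leftrightarrow$ locally free ideal sheaves'' step to \cite[Tag 01X0]{stacks-project}, whereas the paper reproves that step in this rank-one setting by gluing the local units $u$ directly.
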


Ideal sheaves are a monoid under multiplication, so the ``monoid'' part of Lemma \ref{lem:quadformsideals} says that the tensor product $Q \otimes Q'$ of two quadratic forms $Q,Q'$ maps to the product $\scrI \scrI'$ of their associated ideal sheaves $\scrI,\scrI'$.

\begin{proof}
First, we show the map is well defined, which we may do locally.  Let $Q:L \to R$ be a rank $1$ locally cancellative quadratic form; then $L$ is free so we may write $L=Re$ and then $Q(L)=Q(e)R$.  By Proposition \ref{prop:loccanseminon}, we know that $Q(e)$ is a nonzerodivisor, and this is independent of the choice of $e$ (up to a unit of $R$).  If $Q':L' \to R$ is similar to $Q$, then there exist $R$-linear isomorphisms $f:L \to L'$ and $g:R \to R$ such that $Q'(f(x))=g(Q(x))$ for all $x \in L$.  Letting $e'=f(e)$ we have $L'=Re'$.  The map $g$ must be of the form $g(a)=ua$ for some $u \in R^\times$, so $Q'(L')=Q'(f(L))=uQ(L)=u Q(e) R = Q(e)R$, and so the image is a well-defined principal ideal.  

Now let $Q:\scrL \to \scrO_X$ be a locally cancellative quadratic form of rank $1$, corresponding to the global section $q \in (\scrL^{\otimes 2})\spcheck$.  We claim that $q:\scrL^{\otimes 2} \to \scrO_X$ is an isomorphism onto its image $\scrI=Q(\scrL)=q(\scrL^{\otimes 2})$.  If $U=\Spec R$ is an affine open set where $\scrL|_U=Re$, then $q(L^{\otimes 2})= q(e \otimes e)R$; if further $U$ is such that $q$ is cancellative over $U$, we have $q(e \otimes e)$ is a nonzerodivisor, so $q$ is injective on $U$ (and $q(L)$ is free).  By hypothesis, such affine opens $U$ cover $X$, so we have $[\scrI]=[\scrL^{\otimes 2}] \in \Pic(X)$.  

Next, let $q:\scrL \to \scrO_X$ and $q': \scrL' \to \scrO_X$ be locally cancellative quadratic forms such that $q(\scrL)=q'(\scrL')=\scrI$.  Since $[\scrI]=[\scrL^{\otimes 2}]=[(\scrL')^{\otimes 2}]$, tensoring $q'$ by a neutral form we may assume that $f:\scrL \xrightarrow{\sim} \scrL'$.  Then on any open affine $U=\Spec R \subseteq X$, where $\scrL|_U=Re$ and $\scrL'|_U=Re'$, we have $q(L)=q(e)R=q'(e')R$.  Therefore, there exists $u \in R$ such that $q(e)=u q'(e')$ and $u' \in R$ such that $q'(e')=u' q(e)$.  Thus $q(e)(1-uu')=0$.  On an open affine $U$ where $q$ is cancellative, we have  $uu'=1$, so $u \in R^\times$.  Moreover, the element $u$ is unique, since if $q(e)=u q'(e') = v q'(e')$ then $(u-v)q'(e')=0$ so since $q'$ is cancellative, we have $u=v$.  Repeating this argument on an open cover where both $\scrL$ and $\scrL'$ are free, there exists (a unique) $u \in \scrO_X^\times$ giving rise to an isomorphism $\scrO_X \xrightarrow{\sim} \scrO_X$ such that $q'f = uq$, so that $q,q'$ are similar.  

Finally, the map is surjective.  We are given that there exists an invertible bundle $\scrL$ such that $\scrL^{\otimes 2} \simeq \scrI$. The embedding $\scrL^{\otimes 2} \simeq \scrI \hookrightarrow \scrO_X$ then defines a locally cancellative quadratic form $Q:\scrL \to \scrO_X$ with values $Q(\scrL)=\scrI$, as can be readily checked locally.
\end{proof}

\begin{rmk}
A (locally) cancellative rank $1$ quadratic form might not pull back to a (locally) cancellative form under an arbitrary morphism of schemes.
\end{rmk}

To work with discriminants, we will work modulo $2$ and $4$ as follows.  The multiplication by $4$ map on $\scrO_X$ gives a closed immersion 
\[ X_{[4]} = X \times_{\Spec \Z} \Spec \Z/4\Z \hookrightarrow X \] 
and the pullback $\scrL_{[4]} = \scrL \otimes \scrO_X/4\scrO_X$ is an invertible $\scrO_{X_{[4]}}$-module, equipped with a map ${}_{[4]}:\scrL \to \scrL_{[4]}$.  We can also further work modulo $2$, obtaining $\scrL_{[2]}$.  

Let $R$ be a commutative ring.  Then squaring gives a well-defined map of sets
\begin{equation} \label{eqn:R2R}
\begin{aligned}
\sq:R/2R &\to R/4R \\
\sq(t+2R) &= t^2 + 4R
\end{aligned}
\end{equation}
The map $\sq$ is functorial in $R$ and canonically defined, so we can sheafify: if $\scrL$ is an invertible $\scrO_X$-module, there is a unique map
\[ \sq:\scrL_{[2]} \to (\scrL_{[4]})^{\otimes 2} \]
locally defined by \eqref{eqn:R2R}.  Explicitly, for an affine open $U=\Spec R$ of $X$ where $\scrL(U)=L=Re$, we may write $\scrL_{[2]}(U) = (R/2R)e$ and $\scrL_{[4]}^{\otimes 2}(U) = (R/4R)(e \otimes e)$, and
\[ \sq((t+2R)e) = \sq(t+2R)(e \otimes e) \] 
is well-defined (independent of the choice of $e$).

\begin{defn} \label{def:defndisc}
A \defi{discriminant} over $X$ is a pair $(d,\scrL)$ where $\scrL$ is an invertible $\scrO_X$-module and $d \in (\scrL\spcheck)^{\otimes 2}$ such that there exists $t \in \scrL_{[2]}\spcheck$ with 
\begin{equation} \label{eqn:sqcond}
\sq(t)=d_{[4]} \in (\scrL_{[4]}\spcheck)^{\otimes 2}. 
\end{equation}
\end{defn}

If $2$ is invertible on $X$, then $X_{[4]}$ is the empty scheme and the square condition \eqref{eqn:sqcond} is vacuously satisfied.  

\begin{defn}
An \defi{isomorphism} between discriminants $(d,\scrL)$ and $(d',\scrL')$ is an isomorphism $f:\scrL \xrightarrow{\sim} \scrL'$ such that $(f\spcheck)^{\otimes 2}:(\scrL'{}\spcheck)^{\otimes 2} \to (\scrL\spcheck)^{\otimes 2}$ has $(f\spcheck)^{\otimes 2}(d')=d$.  
\end{defn}

Equivalently, an isomorphism between discriminants is an isometry (not a similarity!)\  between the corresponding quadratic forms.  

In what follows, we will often write abbreviate $d$ for a discriminant $(d,\scrL)$, and refer to $\scrL$ as the underlying invertible sheaf.

Let $\Disc(X)$ denote the set of discriminants over $X$ up to isomorphism.  For an invertible sheaf $\scrL$ on $X$, let $\Disc(X;\scrL) \subseteq \Disc(X)$ denote the subset of isomorphism classes of discriminants $d$ whose underlying invertible sheaf is (isomorphic to) $\scrL$.  Define $\scrDisc(X)$ to be the sheaf associated to the presheaf $U \mapsto \Disc(U)$.

\begin{lem} \label{lem:orbitsdisc}
There is a functorial bijection between 
\[ \Disc(X;\scrO_X) \xrightarrow{\sim} \{d \in \scrO_X : \text{$d$ is a square modulo $4\scrO_X(X)$}\}/\scrO_X(X)^{\times 2}. \]
\end{lem}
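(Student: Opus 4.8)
The plan is to make the definitions explicit in the case $\scrL=\scrO_X$ and reduce the asserted bijection to bookkeeping of trivializations of $\scrO_X$ up to squares. First I would fix a representative of a class in $\Disc(X;\scrO_X)$: by definition such a class contains a discriminant $(d,\scrL)$ with $\scrL$ isomorphic to $\scrO_X$, and after choosing an isomorphism $\phi\colon\scrL\xrightarrow{\sim}\scrO_X$ I may replace $(d,\scrL)$ by the isomorphic discriminant $(d_\phi,\scrO_X)$, where $d_\phi$ is determined by $(\phi\spcheck)^{\otimes 2}(d_\phi)=d$. Since $(\scrO_X\spcheck)^{\otimes 2}=\scrO_X$, the datum $d_\phi$ is then simply a global section $d_\phi\in\scrO_X(X)=\Gamma(X,\scrO_X)$, and the map of the lemma sends the class of $(d,\scrL)$ to the class of $d_\phi$ in $\scrO_X(X)/\scrO_X(X)^{\times 2}$.

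Second, I would verify that this assignment is well defined and bijective, which is formal once the trivialization picture is in place. Two trivializations of $\scrO_X$ differ by a global unit $u\in\scrO_X(X)^{\times}=\Aut(\scrO_X)$, and under $(\phi\spcheck)^{\otimes 2}$ such a $u$ rescales $d_\phi$ by $u^{2}$; hence the image is well defined exactly after passing to the quotient by $\scrO_X(X)^{\times 2}$, which is the codomain. Running the same computation in reverse gives injectivity: if $d'=u^{2}d$ for a global unit $u$, then $u$ furnishes an isomorphism of discriminants, using that an isomorphism of discriminants is precisely an isometry (not a mere similarity) of the associated rank $1$ quadratic forms, as recorded after Definition~\ref{def:defndisc}. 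Surjectivity is immediate, since any $d\in\scrO_X(X)$ satisfying the square condition gives the discriminant $(d,\scrO_X)$ mapping to the class of $d$; and functoriality in $X$ follows because pullback carries trivializations to trivializations and the formation of $\sq$ is functorial.

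The step that requires genuine care — and the main obstacle — is matching the square-modulo-$4$ condition of Definition~\ref{def:defndisc} with the condition ``$d$ is a square modulo $4\scrO_X(X)$'' on the right. For $\scrL=\scrO_X$ the map of \eqref{eqn:R2R} sheafifies to $\sq\colon\scrO_X/2\scrO_X\to\scrO_X/4\scrO_X$, and the definition asks for a global section $t\in\Gamma(X,\scrO_X/2\scrO_X)$ with $\sq(t)=d_{[4]}$. I would unwind this over an affine open cover $\{U_i=\Spec R_i\}$, where the condition becomes the ordinary congruence $d\equiv t_i^{\,2}\pmod{4R_i}$ for a lift $t_i$ of $t|_{U_i}$; these local congruences are exactly the shadows of the single global section $t$, recovering the affine description already given in the introduction. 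The delicate point is that $\Gamma(X,\scrO_X/2\scrO_X)$ can be strictly larger than $\scrO_X(X)/2\scrO_X(X)$ (the discrepancy measured by $H^1(X,2\scrO_X)$), so I must check that ``square modulo $4\scrO_X(X)$'' is to be read as the existence of such a global $t$ in the quotient sheaf; with that reading the two conditions coincide tautologically. I expect confirming this compatibility, rather than the formal bijection, to be where the real work lies.
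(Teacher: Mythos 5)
Your argument is correct and is essentially the paper's: after trivializing $\scrL$, an isomorphism of discriminants on $\scrO_X$ is exactly a global unit $u\in\scrO_X(X)^\times$ acting by $d\mapsto u^2d$, which gives well-definedness and injectivity of the map $(d,\scrL)\mapsto [d_\phi]$, with surjectivity and functoriality immediate. Your closing caveat is also well taken: the right-hand side has to be read with the square-modulo-$4$ condition of Definition~\ref{def:defndisc}, i.e.\ the existence of $t$ in the quotient \emph{sheaf} $\scrO_X/2\scrO_X$ rather than a literal lift to $\scrO_X(X)$ (the two can differ by a class in $H^1(X,2\scrO_X)$ in general, though not in the affine case, which is the only one the paper subsequently uses); the paper's own proof passes over this point in silence.
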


\begin{proof}
Suppose that $f$ is an isomorphism between discriminants $d,d'$.  Let $U=\Spec R \subseteq X$ be an affine open subset.  Then $d|_U:R \to R$ is a quadratic map with $d(r)=r^2 d(1)$ for all $r \in R$.  The restriction $f|_U:\scrO_X(U) = R \to R$ is an isomorphism and so is identified with a unique element $u \in R^\times$, and so in $R$ we have $d'|_U(f|_U(1)) = d'|_U(u)= u^2 d'|_U(1)=d|_U(1)$; 
by gluing, there exists a (unique) global section $u \in \scrO_X(X)^\times$ such that $d=u^2 d'$.
\end{proof}

\begin{exm} \label{exm:orbitsdisc}
If $X=\Spec R$ where $R$ is a PID or local ring, then by Lemma \ref{lem:orbitsdisc}, $\Disc(R)=\Disc(R;R)$ is canonically identified with
\[ \{d \in R : d \text{ is a square in $R/4R$}\}/R^{\times 2}. \]
So for $R=\Z$, since $\Z^{\times 2}=\{1\}$ we recover the usual set of discriminants as those integers $d \equiv 0,1 \pmod{4}$.  
\end{exm}

\begin{lem}
$\Disc(X)$ has the structure of commutative monoid under tensor product, with identity element represented by the class of $(1,\scrO_X)$.  Moreover, $\Disc(X;\scrO_X)$ is a submonoid of $\Disc(X)$ with absorbing element $(0,\scrO_X)$.  
\end{lem}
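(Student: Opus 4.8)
The plan is to reduce everything to the symmetric monoidal structure $(\otimes,\scrO_X)$ on the category of $\scrO_X$-modules, the only genuine content being that the square condition \eqref{eqn:sqcond} is stable under tensor product. I would organize the argument into four steps: (a) well-definedness of $*$ on isomorphism classes; (b) stability of the discriminant condition under $\otimes$; (c) the monoid axioms; and (d) the assertions about $\Disc(X;\scrO_X)$ and its absorbing element. For (a), if $f:\scrL \xrightarrow{\sim} \scrL_1$ and $f':\scrL' \xrightarrow{\sim} \scrL_1'$ are isomorphisms of discriminants $(d,\scrL) \simeq (d_1,\scrL_1)$ and $(d',\scrL') \simeq (d_1',\scrL_1')$, so that $(f\spcheck)^{\otimes 2}(d_1)=d$ and $(f'\spcheck)^{\otimes 2}(d_1')=d'$, then $f \otimes f'$ satisfies $((f \otimes f')\spcheck)^{\otimes 2}(d_1 \otimes d_1')=d \otimes d'$ by functoriality of dualizing and tensoring, giving an isomorphism $(d \otimes d',\scrL \otimes \scrL') \simeq (d_1 \otimes d_1',\scrL_1 \otimes \scrL_1')$.

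The crux is step (b): showing $(d \otimes d',\scrL \otimes \scrL')$ is again a discriminant. Given witnesses $t \in \scrL_{[2]}\spcheck$ and $t' \in \scrL'_{[2]}\spcheck$ with $\sq(t)=d_{[4]}$ and $\sq(t')=d'_{[4]}$, I claim $t \otimes t'$ witnesses the condition for $d \otimes d'$. This rests on the multiplicativity identity $\sq(t \otimes t')=\sq(t) \otimes \sq(t')$ under the canonical isomorphisms $(\scrL \otimes \scrL')_{[2]} \simeq \scrL_{[2]} \otimes \scrL'_{[2]}$ and $((\scrL \otimes \scrL')_{[4]}\spcheck)^{\otimes 2} \simeq (\scrL_{[4]}\spcheck)^{\otimes 2} \otimes (\scrL'_{[4]}\spcheck)^{\otimes 2}$. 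Since $\sq$ is functorial and locally given by \eqref{eqn:R2R}, I would check this on an affine open $U=\Spec R$ trivializing both bundles, where $t,t'$ correspond to $\tau,\tau' \in R/2R$ and $t \otimes t'$ to $\tau\tau'$; because squaring is multiplicative, $\sq(\tau\tau')=(\tau\tau')^2=\tau^2(\tau')^2=\sq(\tau)\sq(\tau')$ in $R/4R$. Hence $\sq(t \otimes t')=d_{[4]} \otimes d'_{[4]}=(d \otimes d')_{[4]}$, as needed. This is the main (and essentially the only) obstacle.

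For (c), the commutativity, associativity, and unit axioms follow because the symmetry, associativity, and unit constraints of $(\scrO_X\text{-mod},\otimes,\scrO_X)$ carry the relevant tensored sections to one another after dualizing. Concretely, the canonical isomorphism $\scrL \otimes \scrL' \simeq \scrL' \otimes \scrL$ is an isomorphism of discriminants $(d \otimes d',\scrL \otimes \scrL') \simeq (d' \otimes d,\scrL' \otimes \scrL)$; the associator gives $((d \otimes d') \otimes d'',\cdots) \simeq (d \otimes (d' \otimes d''),\cdots)$; and the unit isomorphism $\scrO_X \otimes \scrL \simeq \scrL$ gives $(1 \otimes d,\scrO_X \otimes \scrL) \simeq (d,\scrL)$, where $1 \in (\scrO_X\spcheck)^{\otimes 2} \simeq \scrO_X$ is a discriminant (witnessed by $t=1$, since $\sq(1)=1$). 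Thus $(1,\scrO_X)$ is a two-sided identity and $\Disc(X)$ is a commutative monoid.

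Finally, for (d): since $\scrO_X \otimes \scrO_X \simeq \scrO_X$, the product of two discriminants with underlying sheaf $\scrO_X$ again has underlying sheaf $\scrO_X$, and $(1,\scrO_X) \in \Disc(X;\scrO_X)$, so $\Disc(X;\scrO_X)$ is a submonoid. The pair $(0,\scrO_X)$ is a discriminant (take $t=0$, so $\sq(0)=0=0_{[4]}$), and for any $(d,\scrO_X) \in \Disc(X;\scrO_X)$ the vanishing $0 \otimes d=0$ yields $(0,\scrO_X) * (d,\scrO_X)=(0,\scrO_X)$, so $(0,\scrO_X)$ is absorbing.
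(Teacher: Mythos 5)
Your proposal is correct and follows essentially the same route as the paper: the key point in both is that the witness $t \otimes t'$ satisfies $\sq(t \otimes t') = d_{[4]} \otimes d'_{[4]} = (d \otimes d')_{[4]}$, with the monoid axioms and the statements about $\Disc(X;\scrO_X)$ and the absorbing element $(0,\scrO_X)$ then following from the symmetric monoidal structure of the tensor product. You simply spell out the well-definedness on isomorphism classes and the local verification of multiplicativity of $\sq$ in more detail than the paper does.
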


\begin{proof}
The binary operation of tensor product on quadratic forms restricts to a binary operation on discriminants: if $(d,\scrL)$ and $(d',\scrL')$ are discriminants, with $t \in \scrL_{[2]}\spcheck$ satisfying $\sq(t)=d_{[4]} \in (\scrL_{[4]}\spcheck)^{\otimes 2}$ and similarly for $(d',\scrL')$, then
\[ \sq(t \otimes t') = d_{[4]} \otimes d'_{[4]} = (d \otimes d')_{[4]} \in (\scrL_{[4]}\spcheck)^{\otimes 2} \otimes ((\scrL_{[4]}'{})\spcheck)^{\otimes 2} \simeq ((\scrL \otimes \scrL')\spcheck_{[4]})^{\otimes 2}. \]  
This definition is independent of the choice of a representative discriminant in an isomorphism class, so we obtain a binary operation on $\Disc(X)$.  This operation is associative and commutative and $(1,\scrO_X)$ is an identity by definition of the tensor product.  The subset $\Disc(X;\scrO_X)$ is closed under tensor product, and $(0,\scrO_X)$ is visibly an absorbing element.  
\end{proof}

To a discriminant $(d,\scrL)$, we can forget the quadratic map $d$ and consider only the isomorphism class of the $\scrO_X$-module $\scrL$: this gives a map
\[ p:\Disc(X) \to \Pic(X). \]

\begin{prop} \label{prop:propexactdiscs}
The sequence 
\[ \Disc(X;\scrO_X) \to \Disc(X) \xrightarrow{p} \Pic(X) \]
of commutative monoids is exact.
\end{prop}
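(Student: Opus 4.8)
The plan is to verify directly the three conditions in the definition of an exact sequence of commutative monoids from Section 1: that the inclusion $f : \Disc(X;\scrO_X) \to \Disc(X)$ is injective, that $p$ is surjective, and that the image congruence $I_f$ coincides with the kernel congruence $K_p$. Throughout I will use that the tensor product of $(e,\scrO_X) \in \Disc(X;\scrO_X)$ with $(d,\scrL) \in \Disc(X)$ is $(e \otimes d, \scrO_X \otimes \scrL)$, which under the canonical isomorphism $\scrO_X \otimes \scrL \simeq \scrL$ is the discriminant $(ed,\scrL)$ obtained by scaling $d$ by the global section $e$; in particular $\Disc(X;\scrO_X)$ really is a submonoid, since $\scrO_X \otimes \scrO_X \simeq \scrO_X$.

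Injectivity and surjectivity are immediate. The map $f$ is by construction the inclusion of a subset of $\Disc(X)$ (the isomorphism classes whose underlying bundle is trivial), hence injective; it is a homomorphism of monoids because it respects the tensor product and sends the identity $(1,\scrO_X)$ to the identity. For surjectivity of $p$, given any class $[\scrL] \in \Pic(X)$ I would exhibit the zero discriminant $(0,\scrL)$: the zero section, with $t=0$, visibly satisfies the square condition \eqref{eqn:sqcond}, so $(0,\scrL)$ is a genuine discriminant, and it maps to $[\scrL]$ under $p$.

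It then remains to show $I_f = K_p$. The containment $I_f \subseteq K_p$ is formal: $p$ is a monoid homomorphism that is identically the unit on $f(\Disc(X;\scrO_X))$, since every discriminant on $\scrO_X$ has trivial underlying bundle; thus if $x \otimes z = y \otimes w$ with $x,y \in \Disc(X;\scrO_X)$, then $p(z) = p(x)p(z) = p(x \otimes z) = p(y \otimes w) = p(w)$.

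For the reverse containment $K_p \subseteq I_f$ — which is the conceptual heart of the statement — I expect the apparent difficulty to be matching the two quadratic maps $d,d'$ after scaling by discriminants on $\scrO_X$; but this is dissolved entirely by the absorbing element of $\Disc(X;\scrO_X)$. Suppose $p(d,\scrL) = p(d',\scrL')$, i.e.\ $\scrL \simeq \scrL'$. Taking $x = y = (0,\scrO_X)$, the products are $(0,\scrO_X) \otimes (d,\scrL) = (0,\scrL)$ and $(0,\scrO_X) \otimes (d',\scrL') = (0,\scrL')$, and any isomorphism $\scrL \xrightarrow{\sim} \scrL'$ is automatically an isomorphism of the zero discriminants $(0,\scrL) \simeq (0,\scrL')$, since $(f\spcheck)^{\otimes 2}(0) = 0$. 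Hence the pair $\bigl((d,\scrL),(d',\scrL')\bigr)$ lies in $I_f$, which gives $K_p \subseteq I_f$ and completes the proof. So the only subtlety is recognizing that the absorbing element collapses the fibers of $p$ onto a single zero discriminant per isomorphism class of line bundle, after which exactness is immediate.
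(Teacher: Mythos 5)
Your proof is correct and follows essentially the same route as the paper: surjectivity of $p$ via the zero discriminant $(0,\scrL)$ with $t=0$, and the reverse containment $K_p \subseteq I_f$ by tensoring both sides with the absorbing element $(0,\scrO_X)$. The paper additionally remarks that one could take any $\delta$ in the conductor-type set $(d:d')$, but the choice $\delta=\delta'=0$ is exactly what it uses.
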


\begin{proof}
The map $p:\Disc(X) \to \Pic(X)$ is surjective because an invertible module $\scrL$ has the zero quadratic form $d=0$, which is a discriminant taking $t=0$.  (One can hardly do better in general, since it may be the case that $\scrL(X)=\{0\}$ has no nonzero global sections.) 

Let $i:\Disc(X;\scrO_X) \hookrightarrow \Disc(X)$ be the inclusion map.  We show that $I_i = K_p$.  The inclusion $I_i \subseteq K_p$ is easy, so we show the reverse inclusion.  Let $d,d'$ be discriminants and suppose $([d],[d']) \in K_p$; then the underlying invertible sheaves of $d,d'$ are isomorphic, so we may assume without loss of generality that $d,d' \in (\scrL\spcheck)^{\otimes 2}$.  To show $([d],[d']) \in I_i$, we need to show that there exist $\delta,\delta' \in \Disc(X;\scrO_X)$ such that $\delta \otimes d' = \delta' \otimes d$.  For this purpose, we may take $\delta=\delta'=0$.  More generally, we could take any $\delta \in (d : d') = \{ \delta \in \scrO_X(X) : \delta d' \in \la d \ra \} \subseteq \scrO_X(X)$.
\end{proof}

\section{Quadratic algebras}

In this section, we give a monoid structure on the set of isomorphism classes of quadratic algebras.  We begin by discussing the algebras over commutative rings, then work over a general base scheme.  For more on quadratic rings and standard involutions, see Knus \cite[Chapter I, \S 1.3]{Knus} and Voight \cite[\S 1--2]{VoightLowRank}.

Let $R$ be a commutative ring.  An \defi{$R$-algebra} is an associative ring $B$ with $1$ equipped with an embedding $R \hookrightarrow B$ of rings (mapping $1 \in R$ to $1 \in B$) whose image lies in the center of $B$; we identify $R$ with this image in $B$.  A homomorphism of $R$-algebras is required to preserve $1$.  

\begin{defn}
A \defi{quadratic} $R$-algebra (or \defi{quadratic ring} over $R$) is an $R$-algebra $S$ that is locally free of rank $2$ as an $R$-module.  
\end{defn}

Let $S$ be a quadratic $R$-algebra.  Then $S$ is commutative, and there is a unique \defi{standard involution} on $S$, an $R$-linear homomorphism $\sigma: S \to S^{\textup{op}}=S$ such that $\sigma(\sigma(x))=x$ and $x\sigma(x) \in R$ for all $x \in S$.  Consequently, we have a linear map $\trd:S \to R$ defined by $\trd(x) = x + \sigma(x)$ and a multiplicative map $\nrd:S \to R$ by $\nrd(x)=x\sigma(x)=\sigma(x)x$ with the property that $x^2-\trd(x)x+\nrd(x)=0$ for all $x \in S$.  

If $S$ is free over $R$ with basis $1,x$, then the multiplication table is determined by the multiplication $x^2=tx-n$: consequently, we have a bijection
\begin{equation}  \label{eq:freequadtn}
\begin{aligned} 
\left\{ \begin{minipage}{26.5ex} 
\begin{center}
Free quadratic $R$-algebras \\[0.5ex]
$S$ over $R$ equipped \\[0.5ex] 
with a basis $1,x$ 
\end{center} 
\end{minipage}
\right\} &\xrightarrow{\sim} R^2 \\
S &\mapsto (\trd(x),\nrd(x))=(x+\overline{x},x\overline{x})=(t,n).
\end{aligned}
\end{equation}
A change of basis for a free quadratic $R$-algebra is of the form $x \mapsto u(x+r)$ with $u \in R^\times$ and $a \in R$, mapping 
\begin{equation} \label{eq:changeofbasis}
(t,n) \mapsto (u(t+2r), u^2(n+tr+r^2)).
\end{equation}

We have identified $R \subseteq S$ as a subring; the quotient $S/R$ is locally free of rank $1$.  Therefore, we have a canonical identification 
\begin{equation} \label{eqn:tSR}
\begin{aligned}
S/R &\xrightarrow{\sim} \tbigwedge^2 S \\
x + R &\mapsto 1 \wedge x.
\end{aligned}
\end{equation}

\begin{lem} \label{lem:isdisc}
Let $S$ be a quadratic $R$-algebra.  Then the map
\begin{equation} \label{eqn:discmap}
\begin{aligned}
d: (\tbigwedge^2 S)^{\otimes 2} &\to R \\
(x \wedge y) \otimes (z \wedge w) &\mapsto (x\sigma(y)-\sigma(x)y)(z\sigma(w)-\sigma(z)w).
\end{aligned}
\end{equation}
is a discriminant.
\end{lem}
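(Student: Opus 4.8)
The plan is to check, in order, that the formula \eqref{eqn:discmap} defines a well-defined $R$-module homomorphism, that its image lies in $R$ (so that we obtain a genuine quadratic form on $\scrL = \tbigwedge^2 S$), and that this form satisfies the square-modulo-$4$ condition \eqref{eqn:sqcond} of Definition~\ref{def:defndisc}. Each assertion is local on $\Spec R$, so I may pass to an affine open cover and assume $S$ is free with basis $1,\omega$, where $\omega^2 = t\omega - n$; then $\sigma(\omega) = t - \omega$ and $\tbigwedge^2 S$ is free of rank $1$ on $1 \wedge \omega$.

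For well-definedness, I would note that $(x,y) \mapsto x\sigma(y) - \sigma(x)y$ is $R$-bilinear and, since $S$ is commutative and $\sigma$ an involution, alternating; it therefore factors through an $R$-linear map $\phi:\tbigwedge^2 S \to S$, and composing $\phi \otimes \phi$ with the multiplication $S \otimes_R S \to S$ yields the homomorphism $d$ of \eqref{eqn:discmap}. A priori $d$ takes values in $S$; to see that they lie in $R$ I compute on the generator:
\[ \phi(1 \wedge \omega) = \sigma(\omega) - \omega = t - 2\omega, \qquad d\bigl((1\wedge\omega)^{\otimes 2}\bigr) = (t - 2\omega)^2 = t^2 - 4n \in R. \]
Since $(\tbigwedge^2 S)^{\otimes 2}$ is generated by $(1\wedge\omega)^{\otimes 2}$ and $d$ is $R$-linear, its image lies in $R$ on each chart; thus $d$ is a global section of $((\tbigwedge^2 S)\spcheck)^{\otimes 2}$, i.e.\ a quadratic form on $\scrL = \tbigwedge^2 S$.

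It remains to produce a global section $t \in \scrL_{[2]}\spcheck$ with $\sq(t) = d_{[4]}$. The natural choice is the reduced trace: because $\trd(r) = 2r$ for $r \in R$, the reduction of $\trd$ modulo $2$ kills the image of $R$ and so descends, via the canonical identification $\scrL \simeq S/R$ of \eqref{eqn:tSR}, to a homomorphism $\scrL_{[2]} = (S/R)_{[2]} \to \scrO_X/2\scrO_X$; let $t \in \scrL_{[2]}\spcheck$ be the corresponding section. On the chart above $t$ sends $1 \wedge \omega \mapsto \trd(\omega) = t$, so $\sq(t)$ evaluates on $(1\wedge\omega)^{\otimes 2}$ to $t^2 \bmod 4$, while $d_{[4]}$ evaluates to $t^2 - 4n \equiv t^2 \psmod{4}$. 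These agree, so \eqref{eqn:sqcond} holds and $(d, \tbigwedge^2 S)$ is a discriminant.

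The one genuinely subtle step is the claim that the values of $d$ lie in $R$ and not merely in $S$. One is tempted to argue abstractly that each factor $x\sigma(y) - \sigma(x)y$ is anti-invariant under $\sigma$, whence their product is $\sigma$-invariant; but the fixed subring of $\sigma$ can strictly contain $R$ (for instance when $2 = 0$ and $\sigma = \id$), so invariance alone does not place the product in $R$. The explicit local value $t^2 - 4n$ is what makes the statement correct, and it is also exactly the input to the square-modulo-$4$ verification, where the term $4n$ vanishes. The remaining points — that $t$ is independent of the chosen basis and glues to a global section — are immediate, since $\trd$ and the identification \eqref{eqn:tSR} are canonical.
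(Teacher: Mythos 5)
Your proof is correct and follows essentially the same route as the paper: the paper likewise verifies that $d\bigl((1\wedge x)^{\otimes 2}\bigr)=\trd(x)^2-4\nrd(x)\in R$ on a local basis and then exhibits the section $t(1\wedge x)=\trd(x)$ of $\scrL_{[2]}\spcheck$, well-defined because $\trd(x+r)=\trd(x)+2r$, as the witness for the square-modulo-$4$ condition. Your additional remarks (the alternating-bilinear factorization through $\tbigwedge^2 S$, and the caution that $\sigma$-invariance alone does not force values into $R$) are sound elaborations of points the paper disposes of in the surrounding text.
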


We have
\begin{equation} \label{eq:whenfree}
\begin{aligned}
d( (1 \wedge x)^{\otimes 2}) &= (x-\sigma(x))^2=(2x-\trd(x))^2 \\
&= 4x^2-4x\trd(x)+\trd(x)^2 =\trd(x)^2-4\nrd(x)
\end{aligned}
\end{equation}
in the lemma, as one might expect.  We accordingly call the quadratic map $d=d(S)$ in Lemma \ref{lem:isdisc} the \defi{discriminant} of $S$.

\begin{proof}
We define the map
\begin{align*}
t:\tbigwedge^2(S/2S) &\to R/2R \\
t(1 \wedge x)&=\trd(x)
\end{align*} via the identification \eqref{eqn:tSR}.  The map $t$ is well-defined since $\trd(x+r)=\trd(x)+2r \equiv \trd(x) \pmod{2R}$.  We then verify that 
\[ \sq(t)((1\wedge x)^{\otimes 2})=t(1\wedge x)^2=\trd(x)^2\equiv \trd(x)^2-4\nrd(x) = d((1 \wedge x)^{\otimes 2}) \pmod{4R} \]
by \eqref{eq:whenfree}.
\end{proof}

Recall that a commutative $R$-algebra $B$ is \defi{separable} if $B$ is projective as a $B \otimes_R B$-module via the map $x \otimes y \mapsto xy$.  If $B \simeq R[x]/(f(x))$ with $f(x) \in R[x]$, then $B$ is separable if and only if the ideal generated by $f(x)$ and its derivative $f'(x)$ is the unit ideal.  

\begin{lem}
A quadratic $R$-algebra $S$ is separable if and only if its discriminant $d$ is an isomorphism.
\end{lem}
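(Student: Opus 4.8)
The plan is to reduce to the local, free case and then collapse both conditions into a single unit condition on $t^2-4n$. Both properties are local on $X=\Spec R$: separability is preserved under and detected by localization, and whether the $\scrO_X$-module map $d\colon(\tbigwedge^2\scrS)^{\otimes 2}\to\scrO_X$ (a map of invertible sheaves) is an isomorphism can be checked on an affine open cover. So I may assume $S$ is free over $R$ with basis $1,x$ and $x^2=tx-n$, i.e.\ $S\simeq R[x]/(f)$ with $f(x)=x^2-tx+n$. On the discriminant side, $(\tbigwedge^2 S)^{\otimes 2}$ is free on the generator $(1\wedge x)^{\otimes 2}$, and by \eqref{eq:whenfree} the map $d$ sends this generator to $t^2-4n$. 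Hence $d$ is an isomorphism of (rank $1$, free) modules exactly when its value $t^2-4n$ on a generator is a unit, which is precisely the semi-nonsingular condition for rank $1$ quadratic forms recorded in Section \ref{sec:discriminants}. Thus the entire statement reduces to the claim that $S$ is separable if and only if $t^2-4n\in R^\times$.

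For the separability side I would invoke the criterion stated just above the lemma: $S=R[x]/(f)$ is separable if and only if the ideal $(f(x),f'(x))=R[x]$, where $f'(x)=2x-t$. The forward direction is then immediate from the identity
\[ (2x-t)\,f'(x)-4f(x)=t^2-4n, \]
which exhibits the constant $t^2-4n$ as an element of the ideal $(f,f')$; so if $t^2-4n\in R^\times$ then $1\in(f,f')$ and $S$ is separable.

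For the converse I would pass to $S$ itself: under the quotient $R[x]\to R[x]/(f)=S$, the ideal $(f,f')$ maps onto the principal ideal generated by the image of $f'(x)=2x-t=x-\sigma(x)$, so $(f,f')=R[x]$ if and only if $x-\sigma(x)$ is a unit in $S$. Now I would compute the reduced norm $\nrd(x-\sigma(x))=(x-\sigma(x))\sigma(x-\sigma(x))=-(x-\sigma(x))^2=-(t^2-4n)$, again using \eqref{eq:whenfree}. Since an element $y$ of the commutative quadratic algebra $S$ is a unit if and only if $\nrd(y)\in R^\times$ (with inverse $\nrd(y)^{-1}\sigma(y)$ in one direction, and multiplicativity of $\nrd$ in the other), it follows that $x-\sigma(x)$ is a unit in $S$ precisely when $t^2-4n\in R^\times$. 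Combining with the first paragraph settles both implications.

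The only genuinely delicate point is the converse, namely getting from separability back to the unit condition. The identity $(2x-t)f'-4f=t^2-4n$ handles the forward direction with no cleverness, but it does not by itself force $t^2-4n$ to be a unit; the clean way to close the loop is the reinterpretation of $(f,f')=R[x]$ as the invertibility of $x-\sigma(x)$ inside $S$, together with the reduced-norm characterization of units. That reinterpretation (and the verification that an element of a quadratic algebra is a unit iff its reduced norm is) is where I expect the main, if modest, care to be required; the remaining computations are routine.
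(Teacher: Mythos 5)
Your proof is correct. The reduction to the local free case and the identification of ``$d$ is an isomorphism'' with ``$t^2-4n\in R^\times$'' via \eqref{eq:whenfree} is exactly what the paper does. The difference lies in the final equivalence ``$S$ separable $\iff$ $t^2-4n\in R^\times$'': the paper simply cites Knus \cite[Chapter I, (7.3.4)]{Knus} for this, whereas you derive it from the $(f,f')$-criterion stated just before the lemma. Your derivation checks out: the identity $f'(x)^2-4f(x)=t^2-4n$ gives one direction, and the reinterpretation of $(f,f')=R[x]$ as invertibility of $x-\sigma(x)$ in $S$, combined with $\nrd(x-\sigma(x))=-(t^2-4n)$ and the fact that $y\in S$ is a unit iff $\nrd(y)\in R^\times$ (using multiplicativity of $\nrd$ and the explicit inverse $\nrd(y)^{-1}\sigma(y)$), gives the other. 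What your approach buys is self-containedness at the cost of a little computation; it does still rest on the $(f,f')$-criterion for separability of $R[x]/(f)$, which the paper asserts without proof, so the argument is not citation-free, merely pushed back one layer.
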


\begin{proof}
The map $d:(\tbigwedge^2 S)^{\otimes 2} \to R$ is an isomorphism if and only if it is locally an isomorphism, so we reduce to the case where $S=R[x]/(x^2-tx+n)=R[x]/(f(x))$.  Then by (\ref{eq:whenfree}), $d$ is an isomorphism if and only if $\trd(x)^2-4\nrd(x) \in R^\times$ if and only if $S$ is separable \cite[Chapter I, (7.3.4)]{Knus}.
\end{proof}

\begin{cor}
If $S$ is a separable quadratic $R$-algebra over $R$ then $\tbigwedge^2 S \in \Pic(R)[2]$.
\end{cor}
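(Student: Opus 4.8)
The plan is to read off the conclusion directly from the lemma immediately preceding it, together with the definition of $\Pic(R)[2]$. Recall that, writing $\Pic(R)$ multiplicatively with identity the class of $R$, the subgroup $\Pic(R)[2]$ consists precisely of the classes $[\scrL]$ of invertible $R$-modules $\scrL$ for which $\scrL^{\otimes 2} \simeq R$. So the entire content of the corollary is to exhibit such a trivialization of the square of $\tbigwedge^2 S$.

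First I would set $\scrL = \tbigwedge^2 S$. Since $S$ is locally free of rank $2$ as an $R$-module, $\scrL$ is an invertible $R$-module and hence determines a class $[\scrL] \in \Pic(R)$. By Lemma \ref{lem:isdisc} the discriminant of $S$ is the $R$-module homomorphism $d:\scrL^{\otimes 2} = (\tbigwedge^2 S)^{\otimes 2} \to R$, and by the preceding lemma the hypothesis that $S$ is separable is \emph{exactly} the statement that this map $d$ is an isomorphism of $R$-modules.

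Thus an isomorphism $d:\scrL^{\otimes 2} \xrightarrow{\sim} R$ is available, and passing to classes in the Picard group gives $[\scrL]^2 = [\scrL^{\otimes 2}] = [R] = 1$, so that $[\tbigwedge^2 S] = [\scrL] \in \Pic(R)[2]$, as claimed. There is essentially no obstacle here: the only observation required is that the isomorphism furnished by separability is precisely a trivialization of $(\tbigwedge^2 S)^{\otimes 2}$, which is the defining condition for the class of $\tbigwedge^2 S$ to be $2$-torsion in $\Pic(R)$.
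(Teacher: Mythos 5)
Your proof is correct and is exactly the argument the paper intends: the corollary is stated without proof as an immediate consequence of the preceding lemma, which identifies separability with the discriminant $d:(\tbigwedge^2 S)^{\otimes 2}\to R$ being an isomorphism, and that isomorphism is precisely the trivialization of $[\tbigwedge^2 S]^2$ in $\Pic(R)$ that you exhibit.
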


Now let $X$ be a scheme.  

\begin{defn}
A \defi{quadratic $\scrO_X$-algebra} is a sheaf $\scrS$ of $\scrO_X$-algebras that is locally free of rank $2$ as a sheaf of $\scrO_X$-modules: there is a basis of open sets $U$ of $X$ such that $\scrS(U)$ is free of rank $2$ as an $\scrO_X(U)$-module.  
\end{defn}

Equivalently, a quadratic $\scrO_X$-algebra is given by a \defi{double cover} $\phi:Y \to X$, a finite locally free morphism of schemes of degree $2$: the sheaf $\phi_* \scrO_Y$ is a sheaf of $\scrO_X$-algebras that is locally free of rank $2$.  By uniqueness of the standard involution, we obtain a \defi{standard involution} on $\scrS$, a standard involution on $\scrS(U)$ for all open sets $U$ (covering each by affine opens where $\scrS$ is free), and in particular maps $\trd$ and $\nrd$ on $\scrS$.  

Analogous to Lemma \ref{lem:isdisc}, we have the following result.

\begin{prop} \label{prop:isdiscnonfree}
Let $\scrS$ be a quadratic $\scrO_X$-algebra.  Then there is a unique discriminant $d:\tbigwedge^2 \scrS \to \scrO_X$ that coincides locally with the one defined by \eqref{eqn:discmap}.
\end{prop}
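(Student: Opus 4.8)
The plan is to exploit that the formula (\ref{eqn:discmap}) is \emph{intrinsic}: it refers only to the $\scrO_X$-algebra structure of $\scrS$ and to the standard involution $\sigma$, and so it already defines a map of sheaves, reducing the proposition to the two local properties verified in Lemma \ref{lem:isdisc} together with a gluing argument. First I would recall that $\sigma$ is a global object on $\scrS$: it exists and is unique on each affine open where $\scrS$ is free, and by uniqueness these local involutions agree on overlaps and patch to a single $\scrO_X$-linear involution $\sigma\colon\scrS\to\scrS$. The assignment $(x,y)\mapsto x\sigma(y)-\sigma(x)y$ is then a global $\scrO_X$-bilinear map $\scrS\times\scrS\to\scrS$; since $x\sigma(x)=\nrd(x)=\sigma(x)x$ it is alternating, hence factors through a global map $\tbigwedge^2\scrS\to\scrS$. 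Multiplying two of its values yields a global $\scrO_X$-linear map $d\colon(\tbigwedge^2\scrS)^{\otimes 2}\to\scrS$ which, on each affine open where $\scrS$ is free, restricts to the map (\ref{eqn:discmap}).

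Next I would verify the two properties making $d$ a discriminant, both of which are local and therefore supplied by the free case already treated in Lemma \ref{lem:isdisc}. That the image of $d$ lies in the subsheaf $\scrO_X\subseteq\scrS$ is local, and over a free patch $S=R\oplus Rx$ the computation (\ref{eq:whenfree}) gives $d((1\wedge x)^{\otimes 2})=\trd(x)^2-4\nrd(x)\in R$; thus $d$ corestricts to a global section $d\in((\tbigwedge^2\scrS)\spcheck)^{\otimes 2}=(\scrL\spcheck)^{\otimes 2}$ with $\scrL=\tbigwedge^2\scrS$. For the square-modulo-$4$ condition of Definition \ref{def:defndisc} I would produce the witness $t$ globally: since $\trd(r)=2r$, reduction of $\trd\colon\scrS\to\scrO_X$ modulo $2$ annihilates $\scrO_X$ and hence factors through $(\scrS/\scrO_X)_{[2]}\simeq(\tbigwedge^2\scrS)_{[2]}$ via (\ref{eqn:tSR}), giving a global section $t\in\scrL_{[2]}\spcheck$; the identity $\sq(t)=d_{[4]}$ is again local and is exactly the computation carried out in the proof of Lemma \ref{lem:isdisc}.

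Finally, uniqueness is the sheaf axiom: a global section of $(\scrL\spcheck)^{\otimes 2}$ is determined by its restrictions to an affine cover, and any discriminant coinciding locally with (\ref{eqn:discmap}) agrees with the $d$ constructed above on a cover by affines over which $\scrS$ is free. I expect the one genuinely substantive step---as opposed to pure formalism---to be the corestriction of $d$ to $\scrO_X$: the value $x\sigma(y)-\sigma(x)y$ is anti-invariant under $\sigma$, so its products are $\sigma$-invariant, but $\sigma$-invariance alone does not force a section into $\scrO_X$ when the discriminant is a zerodivisor, and it is precisely the explicit local identity (\ref{eq:whenfree}) that pins the image down inside $\scrO_X$. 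Everything else is the observation that basis-independent formulas automatically glue, so no cocycle compatibility beyond agreement on overlaps need be checked.
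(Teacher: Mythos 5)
Your proposal is correct and follows essentially the same route as the paper: both construct the witness $t$ as the reduced trace modulo $2$ (a choice-free local definition that glues, or equivalently the factorization of $\trd$ through $(\scrS/\scrO_X)_{[2]}\simeq(\tbigwedge^2\scrS)_{[2]}$), and both verify $\sq(t)=d_{[4]}$ by the same local computation on a free patch as in Lemma \ref{lem:isdisc}. Your added care about corestricting the image of $d$ into $\scrO_X$ and the sheaf-axiom uniqueness step are points the paper treats more briefly but in the same spirit.
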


\begin{proof}
Let $\scrL=\tbigwedge^2 \scrS$.  We must exhibit $t \in \scrL_{[2]}\spcheck$ such that $\sq(t) = d_{[4]} \in (\scrL_{[4]}\spcheck)^{\otimes 2}$, where ${}_{[4]}$ denotes working modulo $4$, as in the previous section.  We adapt the argument in Lemma \ref{lem:isdisc} to a global setting.  Working first locally, let $U=\Spec R \subseteq X$ be an open set where $\scrS(U)=S$ and $\scrL(U)=L=\tbigwedge^2 S$.  Since $\trd(x+r)=\trd(x)+2r$ for $r \in R$ and $x \in S$, the map
\begin{align*}
t : \tbigwedge^2 (S/2S) \simeq L/2L &\to R/2R \\
x \wedge y &\mapsto \trd(x+y)
\end{align*}
is well-defined (since $t(x \wedge x)=\trd(2x)=0$) and $R$-linear.  This map does not depend on any choices, so repeating this on an open cover, we obtain an element $t \in \scrL_{[2]}\spcheck$.  

Now we verify that $\sq(t) = d_{[4]} \in (\scrL_{[4]}\spcheck)^{\otimes 2}$.  We may do so on an open cover, where $\scrS$ is free, so let $S=R \oplus R x$, and $\tbigwedge^2 S=L=R(1 \wedge x)^{\otimes 2}$.  Then 
\[ d((1 \wedge x)^{\otimes 2}) = \trd(x)^2-4\nrd(x) \equiv \trd(x)^2 \pmod{4R}. \] 
On the other hand, by definition we have 
\[ \sq(t)((1 \wedge x)^{\otimes 2}) \equiv t(1 \wedge x)^2 = \trd(1+x)^2 = (2+\trd(x))^2 \equiv \trd(x)^2 \pmod{4R}. \]
The result follows.
\end{proof}

A quadratic $\scrO_X$-algebra $\scrS$ is separable if and only if $d$ induces an isomorphism of $\scrO_X$-modules $(\tbigwedge^2 \scrS)^{\otimes 2} \xrightarrow{\sim} \scrO_X$, as this is true on any affine open set.  

Let $\Quad(X)$ denote the set of isomorphism classes of quadratic $\scrO_X$-algebras, and for an invertible $\scrO_X$-module $\scrL$ let $\Quad(X;\scrL) \subseteq \Quad(X)$ be the subset of those algebras $\scrS$ such that there exists an isomorphism $\tbigwedge^2 \scrS \simeq \scrL$ of $\scrO_X$-modules.  Define similarly $\scrQuad(X)$ to be the sheaf associated to the presheaf $U \mapsto \Quad(U)$.

We now give $\Quad(X)$ the structure of a commutative monoid.  

\begin{construction} \label{keycons}
Let $X=\Spec R$ and let $S=R\oplus Rx$ and $T=R \oplus Ry$ be free quadratic $R$-algebras with choice of basis.  Let $x^2=tx-n$ and $y^2=sy-m$ so
\[ t=\trd(x), n=\nrd(x), s=\trd(y), m=\nrd(y) \quad\text{ with } t,n,s,m \in R. \]  
Then we define the free quadratic $R$-algebra 
\[ S*T = R \oplus Rw \] 
where
\begin{equation} \label{eq:keydefn}
w^2 = (st)w - (mt^2+ns^2 - 4nm).
\end{equation}
\end{construction}

Construction \ref{keycons} has been known for some time, e.g., it is given by Hahn \cite[Exercises 14--20, pp.\ 42--43]{Hahn}.  (See the introduction for further context and references.)

\begin{lem} \label{lem:check_monoid}
Construction \/\textup{\ref{keycons}} is functorial with respect to the base ring $R$.  The operation $*$ gives the set of free quadratic $R$-algebras with basis the structure of commutative monoid with identity element $R \times R = R[x]/(x^2-x)$ and  absorbing element $R[x]/(x^2)$.
\end{lem}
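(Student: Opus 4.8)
The plan is to transport everything through the bijection \eqref{eq:freequadtn}, which identifies the set of free quadratic $R$-algebras with basis with $R^2$ via $S \mapsto (\trd(x),\nrd(x)) = (t,n)$; conversely every pair in $R^2$ arises, so there is no well-definedness to check for the output of Construction \ref{keycons}. Under this identification, reading $\trd(w)=st$ and $\nrd(w)=mt^2+ns^2-4nm$ off \eqref{eq:keydefn}, the operation $*$ becomes the explicit map
\[ (t,n) * (s,m) = (st,\ mt^2+ns^2-4nm) \]
on $R^2$. Functoriality in $R$ is then immediate: both coordinates are universal polynomials in $t,n,s,m$ with integer coefficients, hence commute with any ring homomorphism $R \to R'$.

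With $*$ in this closed form, commutativity, the identity, and the absorbing element each reduce to a one-line verification. Commutativity holds because both $st$ and $mt^2+ns^2-4nm$ are invariant under the simultaneous swap $(t,n)\leftrightarrow(s,m)$. The algebra $R\times R = R[x]/(x^2-x)$ corresponds to $(1,0)$, and one computes $(t,n)*(1,0)=(t,n)$; the algebra $R[x]/(x^2)$ corresponds to $(0,0)$, and $(t,n)*(0,0)=(0,0)$, so it is absorbing.

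The one substantive point is associativity, which I would settle by direct expansion. Writing $S=(t,n)$, $T=(s,m)$, $U=(p,q)$, the trace coordinate of both $(S*T)*U$ and $S*(T*U)$ is manifestly $stp$, so it remains only to compare the norm coordinates. Expanding $(S*T)*U$ yields $q(st)^2+(mt^2+ns^2-4nm)p^2-4q(mt^2+ns^2-4nm)$, and expanding $S*(T*U)$ yields $(qs^2+mp^2-4mq)t^2+n(sp)^2-4n(qs^2+mp^2-4mq)$; I expect both to collapse to the single symmetric polynomial
\[ qs^2t^2+mt^2p^2+ns^2p^2-4nmp^2-4mt^2q-4ns^2q+16nmq. \]
This is the main obstacle only in the sense of bookkeeping: it is a routine but slightly lengthy identity in $\Z[t,n,s,m,p,q]$, and once it is confirmed all the monoid axioms follow at once.
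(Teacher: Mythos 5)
Your proposal is correct and follows essentially the same route as the paper: the paper likewise checks commutativity by the symmetry of \eqref{eq:keydefn}, verifies the identity and absorbing elements by substituting $(s,m)=(1,0)$ and $(0,0)$ respectively, and dismisses associativity as a routine polynomial identity (which you have verified explicitly, and your expansion is accurate). Functoriality is, as you say, immediate because the structure constants are integer polynomials in $t,n,s,m$.
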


\begin{proof}
Functoriality is clear, and $S*T=T*S$ for all free quadratic $R$-algebras $S,T$ by the symmetry of the construction.  It is routine to check associativity.  To check that $E=R \times R$ is the identity element for $*$ we simply substitute $s=1,m=0$ to obtain $S*E=S$; and a similar check works for the absorbing element.
\end{proof}

\begin{rmk}
Construction \ref{keycons} generalizes the Kummer map, presented in the introduction.  Indeed, suppose that $R$ is a PID or local ring and $2 \in R^\times$.  Then by completing the square, any quadratic $R$-algebra $S$ is of the form $S=R[x]/(x^2-n)=R[\sqrt{n}]$ where $n=d(S)/4$.  So if $S=R[\sqrt{n}]$ and $T=R[\sqrt{m}]$, then $S * T = R[x]/(x^2-4nm) \simeq R[\sqrt{nm}]$.  

At the same time, Construction \ref{keycons} generalizes Artin-Schreier extensions of fields.  Suppose that $R=k$ is a field of characteristic $2$.  Then every separable extension of $k$ can be written in the form $k[x]/(x^2-x+n)$, and 
\[ k[x]/(x^2-x+n) * k[x]/(x^2-x+m) = k[x]/(x^2-x+(m+n)). \] 
Since $4=0$, the discriminant of every such algebra has class $1$ in $R/R^{\times 2}$.

In the above construction, if $S,T$ are separable over $R$, so that they are (\'etale) Galois \cite{Lenstra} extensions of $R$ (with the standard involutions $\sigma,\tau$ respectively as the nontrivial $R$-algebra automorphism), then the algebra $S * T$ is the subalgebra of $S \otimes_R T$ fixed by the product of the involutions $\sigma \otimes \tau$ acting on $S \otimes_R T$ \cite[Proposition 1]{Small}.  

In all cases, a direct calculation shows that Equation \ref{eq:keydefn} is satisfied by the element 
\[ w = x\otimes y + \sigma(x) \otimes \tau(y) \in S \otimes_R T; \]
this will figure in the proof of Theorem \ref{thma}.  However, there is no reason why the $R$-algebra generated by $w$ need be free of rank $2$ over $R$; for example, if $R$ has characteristic $2$ and $\sigma(x)=x$, $\tau(y)=y$, then $w=0$.  Thus, Construction \ref{keycons} can be thought of as a formal way to create a fixed subalgebra of $S \otimes_R T$ under the involution given by the product of standard involutions.  
\end{rmk}

\begin{lem} \label{lem:funct_isom}
Construction \/\textup{\ref{keycons}} is functorial with respect to isomorphisms: if 
\begin{align*}
\phi:S=R \oplus Rx &\xrightarrow{\sim} S'=R \oplus Rx' \\ 
\psi:T=R \oplus Ry &\xrightarrow{\sim} T'=R \oplus Ry'
\end{align*} 
are $R$-algebra isomorphisms of quadratic $R$-algebras, then there is a canonical isomorphism 
\[ \phi*\psi:S*T \xrightarrow{\sim} S'*T'. \]
\end{lem}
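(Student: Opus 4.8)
The plan is to write down $\phi*\psi$ explicitly on the distinguished generators and then check it is a well-defined $R$-algebra homomorphism; since it will manifestly be an $R$-module isomorphism, this is enough. First I would record the shape of $\phi$ and $\psi$. Because $1,x'$ is an $R$-basis of $S'$, we may write $\phi(x)=c+ux'$ with $c,u \in R$, and $\phi$ is an isomorphism exactly when $u \in R^\times$ (compare \eqref{eq:changeofbasis}); likewise $\psi(y)=e+vy'$ with $e \in R$ and $v \in R^\times$. Since the standard involution on a quadratic algebra is unique, $\phi$ and $\psi$ intertwine the standard involutions, i.e.\ $\phi(\sigma(x))=\sigma'(\phi(x))$ and $\psi(\tau(y))=\tau'(\psi(y))$, where $\sigma',\tau'$ denote the standard involutions on $S',T'$. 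Comparing traces and norms then yields
\[ t=2c+ut', \quad n=c^2+cut'+u^2n', \quad s=2e+vs', \quad m=e^2+evs'+v^2m'. \]

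Next I would transport the element $w=x\otimes y+\sigma(x)\otimes\tau(y) \in S\otimes_R T$ of the remark preceding this lemma through the $R$-algebra isomorphism $\Phi=\phi\otimes\psi:S\otimes_R T \xrightarrow{\sim} S'\otimes_R T'$. Using that $\Phi$ commutes with the involutions and that $\trd(x')=t'$, $\trd(y')=s'$, a short expansion gives
\[ \Phi(w)=uv\,w'+\kappa, \qquad \kappa=2ce+cvs'+uet' \in R, \]
where $w'=x'\otimes y'+\sigma'(x')\otimes\tau'(y')$ is the element analogous to $w$ for $S',T'$. I then define $\phi*\psi:S*T=R\oplus Rw \to S'*T'=R\oplus Rw'$ to be the $R$-linear map fixing $R$ and sending $w\mapsto uv\,w'+\kappa$. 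As $uv\in R^\times$, this is an isomorphism of $R$-modules, and it is canonical since $u,c,v,e$ are determined by $\phi,\psi$ and the chosen bases.

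It remains to verify that $\phi*\psi$ is multiplicative, equivalently that $\zeta:=uv\,w'+\kappa$ satisfies the defining relation \eqref{eq:keydefn} of $S*T$, namely $\zeta^2=(st)\zeta-(mt^2+ns^2-4nm)$, inside $S'*T'$. By the remark, $w$ satisfies \eqref{eq:keydefn} in $S\otimes_R T$, so applying $\Phi$ (which fixes $R$) shows that $\zeta=\Phi(w)$ satisfies this same relation in $S'\otimes_R T'$. The one delicate point --- and the step I expect to be the main obstacle --- is that I need the identity in the \emph{abstractly defined} algebra $S'*T'$, not merely in $S'\otimes_R T'$, where the subalgebra generated by $w'$ may fail to be free of rank $2$ (indeed $w'$ can vanish). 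Expanding $\zeta^2$ using $w'^2=(s't')w'-(m't'^2+n's'^2-4n'm')$, the coefficient of $w'$ matches automatically (it reduces to $st=2\kappa+uvs't'$, which follows from the displayed relations), and the constant term reduces to the polynomial identity
\[ mt^2+ns^2-4nm=\kappa^2+uvs't'\kappa+u^2v^2(m't'^2+n's'^2-4n'm'). \]
To prove this cleanly I would pass to the generic ring $R_0=\Z[t',n',s',m',c,e,u^{\pm 1},v^{\pm 1}]$ with $t,n,s,m$ given by the displayed relations: there $S'\otimes_{R_0}T'$ is free of rank $4$ and $w'=t's'-s'x'-t'y'+2x'y'$ has $x'\otimes y'$-coefficient $2\neq 0$, so $1,w'$ are independent and $R_0[w']\cong S'*T'$ is free of rank $2$. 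Hence the relation for $\zeta$ genuinely holds in $S'*T'$ over $R_0$, and then over every $R$ by specialization. (Alternatively one expands both sides of the identity directly.)

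Finally I would record naturality: $\id*\id=\id$ and $(\phi'\circ\phi)*(\psi'\circ\psi)=(\phi'*\psi')\circ(\phi*\psi)$, both of which follow at once from the functoriality of $\Phi=\phi\otimes\psi$ in $(\phi,\psi)$ together with the fact that $\phi*\psi$ is determined by the image of the canonical element $w$.
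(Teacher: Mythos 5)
Your proof is correct, and it arrives at exactly the isomorphism the paper writes down: your constant $\kappa = 2ce + cvs' + uet'$ equals the paper's $qt + rs - 2qr$ (with $c=r$, $e=q$) after substituting $t = ut'+2c$ and $s = vs'+2e$, and your change-of-basis relations for $(t,n,s,m)$ are the right ones. The difference is in how the formula is found and how the relation \eqref{eq:keydefn} is checked. The paper simply posits $(\phi*\psi)(w)=(uv)w'+(qt+rs-2qr)$ and verifies the quadratic relation by direct expansion inside the abstractly defined $S'*T'$; you instead derive the formula by transporting $w=x\otimes y+\sigma(x)\otimes\tau(y)$ through $\phi\otimes\psi$ (the heuristic the paper itself records in the remark after Lemma \ref{lem:check_monoid}), and then transfer the relation from $S\otimes_R T$ to $S'*T'$ by passing to the generic ring $\Z[t',n',s',m',c,e,u^{\pm 1},v^{\pm 1}]$, where $1,w'$ are genuinely independent in $S'\otimes T'$ and so generate a copy of $S'*T'$. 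You are right to flag the subtlety that over an arbitrary $R$ the subalgebra generated by $w'$ in $S'\otimes_R T'$ need not be free of rank $2$, so the identity in the tensor product does not by itself give the identity in $S'*T'$; the specialization argument handles this cleanly, and it is essentially the same genericity device the paper deploys later in the proof of Theorem \ref{thma}. The trade-off is that the paper's verification is a shorter but unmotivated polynomial check, while yours explains where the constant term comes from at the cost of the detour through the generic ring (with the direct expansion available as a fallback, as you note).
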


\begin{proof}
There exist unique $u,v \in R^\times$ and $r,q \in R$ such that
$\phi(x)=ux'+r$ and $\psi(y)=vy'+q$.  Because $\phi$ is an $R$-algebra homomorphism, both $\phi(x)$ and $x$ satisfy the same unique monic quadratic polynomial, and from
\[ (ux'+r)^2=t(ux'+r)-n \] 
we conclude that
\[ (x')^2 = u^{-1}(t-2r)x' - u^{-2}(n-tr-r^2) = t'x - n' \]
so $t=ut'+2r$ and $n=u^2n'+tr+r^2$.  Similarly, we obtain $s=vs'+2q$ and $m=v^2m'+sq+q^2$.  We claim then that the map
\begin{align*}
\phi*\psi:S*T &\xrightarrow{\sim} S'*T' \\
(\phi*\psi)(w)&=(uv)w'+(qt+rs-2qr)
\end{align*}
is an isomorphism; for this we simply verify that 
\[ ((uv)w'+(qt+rs-2qr))^2 = st((uv)w'+(qt+rs-2qr))-(mt^2+ns^2-4nm) \]
and the result follows.
\end{proof}

\begin{lem} \label{lem:ref_glue}
Let $\scrS,\scrT$ be quadratic $\scrO_X$-algebras.  Then there is a unique quadratic $\scrO_X$-algebra $\scrS * \scrT$ up to $\scrO_X$-algebra isomorphism with the property that on any affine open set $U \subseteq X$ such that $S=\scrS(U)$ and $T=\scrT(U)$ are free, we have 
\[ (\scrS * \scrT)(U) \simeq S*T \]
as in Construction \textup{\ref{keycons}}.
\end{lem}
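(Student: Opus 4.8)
The plan is to construct $\scrS * \scrT$ by gluing the local algebras produced by Construction \ref{keycons} along a suitable affine cover, using Lemma \ref{lem:funct_isom} to supply the transition isomorphisms and to guarantee that the result is independent of all choices. First I would choose an affine open cover $\{U_i = \Spec R_i\}_i$ of $X$ on which both $\scrS$ and $\scrT$ are free (such a cover exists since $\scrS,\scrT$ are locally free of rank $2$), and fix bases $\scrS(U_i) = R_i \oplus R_i x_i$ and $\scrT(U_i) = R_i \oplus R_i y_i$. Applying Construction \ref{keycons} over each $R_i$ yields a free quadratic $\scrO_{U_i}$-algebra $\scrE_i$, namely the sheaf associated to $\scrS(U_i) * \scrT(U_i)$. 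A preliminary point is that $\scrE_i$ is independent of the chosen bases up to canonical isomorphism: any two bases for $\scrS(U_i)$ and $\scrT(U_i)$ differ by $R_i$-algebra automorphisms, and Lemma \ref{lem:funct_isom} converts such a pair of automorphisms into a canonical isomorphism between the two resulting products.

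Next I would produce the transition data. On each overlap $U_{ij} := U_i \cap U_j$, both $\scrE_i|_{U_{ij}}$ and $\scrE_j|_{U_{ij}}$ are built from the same restricted sheaves $\scrS|_{U_{ij}}$ and $\scrT|_{U_{ij}}$, but using the two different bases coming from $U_i$ and $U_j$. Reading the identity maps $\scrS|_{U_{ij}} \xrightarrow{\sim} \scrS|_{U_{ij}}$ and $\scrT|_{U_{ij}} \xrightarrow{\sim} \scrT|_{U_{ij}}$ in these two bases presents them as isomorphisms of free quadratic algebras, so Lemma \ref{lem:funct_isom} produces a canonical isomorphism $\alpha_{ij} : \scrE_i|_{U_{ij}} \xrightarrow{\sim} \scrE_j|_{U_{ij}}$ of quadratic $\scrO_{U_{ij}}$-algebras.

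I expect the main obstacle to be verifying the cocycle condition $\alpha_{ik} = \alpha_{jk} \circ \alpha_{ij}$ on triple overlaps $U_{ijk}$, which is exactly what is needed to invoke the gluing lemma for sheaves of algebras. This reduces to showing that the assignment $(\phi,\psi) \mapsto \phi * \psi$ of Lemma \ref{lem:funct_isom} is compatible with composition, i.e. $(\phi' \circ \phi) * (\psi' \circ \psi) = (\phi' * \psi') \circ (\phi * \psi)$. I would check this directly from the explicit formula for $\phi * \psi$ recorded in the proof of Lemma \ref{lem:funct_isom}, by tracking how the change-of-basis parameters $u,v,r,q$ compose; since the relevant maps on $U_{ijk}$ are the identities on $\scrS$ and $\scrT$ expressed in three bases, this compatibility forces the cocycle identity.

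Granting this, the $\alpha_{ij}$ glue the $\scrE_i$ into a single locally free rank $2$ sheaf $\scrS * \scrT$, and because each $\alpha_{ij}$ is an $\scrO_X$-algebra isomorphism the multiplication maps glue as well, so $\scrS * \scrT$ is a quadratic $\scrO_X$-algebra restricting to $S * T$ on every free affine open, as required. Finally, uniqueness follows by the same mechanism: if $\scrE$ is any quadratic $\scrO_X$-algebra with $\scrE(U) \simeq \scrS(U) * \scrT(U)$ on free affine opens $U$, then the local canonical isomorphisms supplied by Lemma \ref{lem:funct_isom} satisfy the same composition compatibility, hence agree on overlaps and glue to a global isomorphism $\scrE \xrightarrow{\sim} \scrS * \scrT$.
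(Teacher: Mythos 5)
Your proposal is correct and follows essentially the same route as the paper: cover $X$ by affine opens on which $\scrS$ and $\scrT$ are free, apply Construction \ref{keycons} locally, and glue via the transition isomorphisms supplied by Lemma \ref{lem:funct_isom}. The only difference is one of emphasis: you explicitly isolate the cocycle condition and reduce it to compatibility of $(\phi,\psi)\mapsto \phi*\psi$ with composition, a point the paper leaves implicit in the word ``compatible.''
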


\begin{proof}
This proposition is a standard application of gluing; we give the argument for completeness.  Let $\{U_i=\Spec R_i\}$ be an open affine cover of $X$ on which 
\begin{center}
$\scrS(U_i)=S_i=R_i \oplus R_ix_i$ and $\scrT(U_i)=T_i=R_i \oplus R_iy_i$ 
\end{center}
are free.  We define $(\scrS*\scrT)(U_i)=S_i*T_i = R_i \oplus R_i w_i$ according to Construction \ref{keycons}.  We glue these according to the isomorphisms on $\scrS$ and $\scrT$ using Lemma \ref{lem:funct_isom}, as follows.  We have $U_i \cap U_j=U_j \cap U_i = \bigcup_k U_{ijk}$ covered by open sets $U_{ijk}=\Spec R_{ik} \simeq \Spec R_{jk}$ distinguished in $U_i$ and $U_j$.  Because $\scrS$ is a sheaf, we have compatible isomorphisms
\[ \phi_{ijk}: R_{ik} \oplus R_{ik} x_i= \scrS(\Spec R_{ik}) \simeq \scrS(\Spec R_{jk}) =  R_{jk} \oplus R_{jk} x_j  \]
for each such open set.  Similarly, we obtain compatible isomorphisms $\psi_{ijk}$ for $\scrT$ over the same open cover.  By Lemma \ref{lem:funct_isom}, we obtain compatible isomorphisms 
\[ \phi_{ijk} * \psi_{ijk} : (\scrS*\scrT)(\Spec R_{ik}) \simeq (\scrS*\scrT)(\Spec R_{jk}) \]
and can thereby glue on $X$ to obtain a quadratic $\scrO_X$-algebra, unique up to $\scrO_X$-algebra isomorphism.
\end{proof}

\begin{cor}
Construction \textup{\ref{keycons}} gives $\Quad(X)$ the structure of a commutative monoid, functorial in $X$, with identity element the isomorphism class of $\scrO_X \times \scrO_X$.
\end{cor}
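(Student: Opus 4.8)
The plan is to deduce the corollary formally from the three preceding lemmas: Lemma~\ref{lem:ref_glue} furnishes the operation $*$ on quadratic $\scrO_X$-algebras, Lemma~\ref{lem:funct_isom} guarantees that it descends to isomorphism classes, and Lemma~\ref{lem:check_monoid} supplies the monoid axioms at the level of free algebras, which then propagate globally by gluing. The key leverage throughout is the uniqueness clause of Lemma~\ref{lem:ref_glue}: a quadratic $\scrO_X$-algebra is determined up to isomorphism by its restrictions to an affine open cover on which the relevant inputs are free.

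First I would check that $*$ is well-defined on $\Quad(X)$. Given isomorphisms $\scrS \simeq \scrS'$ and $\scrT \simeq \scrT'$, over any affine open $U$ on which all four sheaves are free (freeness being preserved under isomorphism) the local isomorphisms restrict and, by Lemma~\ref{lem:funct_isom}, induce an isomorphism $S*T \simeq S'*T'$. Thus $\scrS'*\scrT'$ satisfies the local property characterizing $\scrS*\scrT$, so $\scrS*\scrT \simeq \scrS'*\scrT'$ by the uniqueness in Lemma~\ref{lem:ref_glue}. Hence $*$ is a well-defined binary operation on isomorphism classes.

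Next I would verify commutativity, associativity, and the identity law, each of which holds on free algebras over an arbitrary base ring by Lemma~\ref{lem:check_monoid}. For commutativity, $\scrT*\scrS$ restricts over the relevant affine opens to $T*S = S*T$, so it satisfies the defining property of $\scrS*\scrT$ and is isomorphic to it. For the identity, $\scrO_X \times \scrO_X$ restricts locally to $R \times R$, whence $\scrS*(\scrO_X \times \scrO_X)$ restricts to $S*(R \times R) \simeq S$ and is therefore isomorphic to $\scrS$. Associativity demands slightly more care, since $(\scrS_1 * \scrS_2) * \scrS_3$ and $\scrS_1 * (\scrS_2 * \scrS_3)$ are products of different pairs of algebras: I would fix an affine cover on which $\scrS_1, \scrS_2, \scrS_3$ are simultaneously free and observe that both composites restrict there to the common free algebra $(S_1 * S_2) * S_3 = S_1 * (S_2 * S_3)$ provided by the associativity in Lemma~\ref{lem:check_monoid}. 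In fact, tracing through Construction~\ref{keycons} shows the two parenthesizations yield identical reduced trace and norm in the canonical generator, so these local identifications are the identity and glue at once to a global isomorphism. Finally, functoriality in $X$ follows because pullback commutes with Construction~\ref{keycons} over affine opens by the base-ring functoriality of Lemma~\ref{lem:check_monoid}, and the resulting local isomorphisms glue.

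The main obstacle is not any single computation—these are all discharged by the earlier lemmas—but the bookkeeping of compatibility: I must ensure that the local isomorphisms witnessing each axiom agree on overlaps, so that they glue to genuine global isomorphisms of $\scrO_X$-algebras. This is precisely where the functoriality of Construction~\ref{keycons} with respect to isomorphisms (Lemma~\ref{lem:funct_isom}) does the real work, since it renders the local comparison isomorphisms canonical rather than arbitrary; the uniqueness in Lemma~\ref{lem:ref_glue} then packages this gluing into the single statement that a product is determined by its free local models.
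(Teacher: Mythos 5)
Your argument is correct and follows the same route as the paper: the paper likewise reduces everything to the affine/free case via Lemma~\ref{lem:ref_glue}, then invokes Lemmas~\ref{lem:check_monoid} and~\ref{lem:funct_isom} for the monoid axioms and well-definedness on isomorphism classes. You simply spell out in more detail the gluing and compatibility checks that the paper leaves implicit.
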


\begin{proof}
Lemma \ref{lem:ref_glue} shows that Construction \ref{keycons} extends to $X$ and is well defined on the set of isomorphism classes $\Quad(X)$ of quadratic $\scrO_X$-algebras.  To check that we obtain a functorial commutative monoid, it is enough to show this when $X$ is affine, and this follows from Lemmas \ref{lem:check_monoid} and \ref{lem:funct_isom}.
\end{proof}

\begin{lem}
If $\scrS$ is a separable quadratic $\scrO_X$-algebra, then $\scrS * \scrS \simeq \scrO_X \times \scrO_X$.
\end{lem}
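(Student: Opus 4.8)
The plan is to reduce to the affine free case via the gluing description of $*$ (Lemma~\ref{lem:ref_glue}), compute $\scrS * \scrS$ there explicitly, and then treat the passage from local to global as the real point. First I would work over an affine open $U = \Spec R$ on which $S = \scrS(U) = R \oplus Rx$ is free with $x^2 = tx - n$. Since $\scrS$ is separable, its discriminant is an isomorphism, so the local discriminant $d = t^2 - 4n$ lies in $R^\times$. Applying Construction~\ref{keycons} with $T = S$ (so $s = t$ and $m = n$) gives $S * S = R \oplus Rw$ with
\[ w^2 = t^2 w - 2n(t^2 - 2n), \]
whose discriminant is $t^4 - 4\cdot 2n(t^2 - 2n) = (t^2 - 4n)^2 = d^2 \in R^{\times 2}$, consistent with $\disc$ being multiplicative.

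Second, I would factor the defining polynomial. Its two roots in $R$ are $2n$ and $t^2 - 2n$, so
\[ w^2 - t^2 w + 2n(t^2 - 2n) = (w - 2n)\bigl(w - (t^2 - 2n)\bigr), \]
and the difference of the roots is exactly $d = t^2 - 4n \in R^\times$. Hence the two linear factors generate comaximal ideals, $\epsilon = (w - 2n)/d$ is an idempotent with $\epsilon^2 = \epsilon$, and $\{\epsilon, 1 - \epsilon\}$ is a complete orthogonal system; by the Chinese Remainder Theorem $S * S \cong R \times R$ over $U$. This step is exactly where separability is indispensable: the root-difference is a unit precisely when $d$ is.

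Third --- and this is the main obstacle --- I must promote these local splittings to a global isomorphism $\scrS * \scrS \cong \scrO_X \times \scrO_X$, ruling out that $\scrS * \scrS$ is a nontrivial (connected) \'etale double cover that merely splits locally. Knowing only that $\scrS * \scrS$ is separable with a trivial square-discriminant is \emph{not} sufficient, since separable quadratic algebras with trivial Steinitz class need not split (they are classified by $H^1_{\textup{et}}(X,\Z/2\Z)$). The resolution is to check that the idempotent $\epsilon$ is canonical, i.e.\ independent of the basis $x$: under any change of basis $\phi(x) = ux' + r$, one has $d = u^2 d'$, and the induced gluing isomorphism $\phi * \phi$ of Lemma~\ref{lem:funct_isom} sends $w$ to $u^2 w' + 2r(t-r)$, so that $(\phi*\phi)(\epsilon) = (w' - 2n')/d' = \epsilon'$. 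Thus the local idempotents agree on overlaps and glue to a global section $\varepsilon \in (\scrS * \scrS)(X)$. A global idempotent decomposes $\scrS * \scrS$ into two locally free rank-$1$ $\scrO_X$-algebra factors $\varepsilon\,(\scrS*\scrS)$ and $(1-\varepsilon)(\scrS*\scrS)$, each of which is necessarily isomorphic to $\scrO_X$ (a rank-$1$ $\scrO_X$-algebra has structure map $\scrO_X \to \cdot$ an isomorphism), giving $\scrS * \scrS \cong \scrO_X \times \scrO_X$.

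The conceptual reason the idempotent is canonical --- and a basis-free alternative to the gluing computation --- is the identification $\scrS * \scrS \cong (\scrS \otimes_{\scrO_X} \scrS)^{\sigma \otimes \sigma}$ valid for separable $\scrS$ (the remark following Construction~\ref{keycons}). For an \'etale $\Z/2\Z$-Galois algebra the canonical map $\scrS \otimes_{\scrO_X} \scrS \xrightarrow{\sim} \scrS \times \scrS$, $a \otimes b \mapsto (ab,\, a\sigma(b))$, is an isomorphism (check locally, where it is invertible because $d \in R^\times$) and intertwines $\sigma \otimes \sigma$ with the diagonal involution $\sigma \times \sigma$; hence the fixed subalgebra is $\scrS^\sigma \times \scrS^\sigma = \scrO_X \times \scrO_X$. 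Under this identification $\varepsilon$ is simply the canonical idempotent of the Galois decomposition, manifestly independent of all choices, which is why the local pieces glue.
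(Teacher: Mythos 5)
Your proposal is correct, and its local core is identical to the paper's: the paper also specializes Construction \ref{keycons} to $s=t$, $m=n$ to get $w^2 = t^2w - 2n(t^2-2n)$, then substitutes $w \leftarrow w-2n$ and rescales by $d^{-1}$ --- which is precisely your idempotent $\epsilon = (w-2n)/d$ --- to land on $R[w]/(w^2-w) \simeq R\times R$. Where you genuinely diverge is the globalization. The paper's entire treatment of that step is the sentence ``By gluing, it is enough to show this on an affine cover,'' and you are right that this is not literally sufficient: a separable quadratic algebra that splits on an affine cover need not split globally, so one must check that the local trivializations are compatible with the transition maps. Your verification that $\epsilon$ is basis-independent under $\phi*\phi$ supplies exactly the justification the paper leaves implicit (its substitution $w \mapsto (w-2n)/d$ is canonical, which is why its gluing appeal works, but it never says so), and your closing Galois-theoretic identification $\scrS*\scrS \simeq (\scrS\otimes\scrS)^{\sigma\otimes\sigma} \simeq \scrO_X\times\scrO_X$ via $a\otimes b \mapsto (ab, a\sigma(b))$ is a clean basis-free way to see the same canonicity. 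One small caution: your cocycle check uses the relation between $(t,n)$ and $(t',n')$ from Lemma \ref{lem:funct_isom}; the correct relation is $n = u^2n' + tr - r^2$ (the lemma's displayed $n = u^2n'+tr+r^2$ has a sign slip), and it is with this corrected relation that $(\phi*\phi)(\epsilon) = (w'-2n')/d'$ comes out exactly, so your conclusion stands.
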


\begin{proof}
By gluing, it is enough to show this on an affine cover.  Suppose $S=R[x]/(x^2-tx+n)$ has discriminant $d=t^2-4n$.  Then by definition, we have 
\[ S*S = R[w]/(w^2-t^2w+2n(t^2-2n)); \]
 with the substitution $w \leftarrow w-2n$, we find that $S*S \simeq R[w]/(w^2-dw)$.  Since $d \in R^\times$, the replacement $w \leftarrow wd^{-1}$ yields an isomorphism $S*S \simeq R \times R$.
\end{proof}

\begin{lem} \label{lem:homquad}
If $\scrS,\scrT \in \Quad(X)$ then
\[ d(\scrS*\scrT) = d(\scrS)d(\scrT) \in \Disc(X) \]
and
\[ \tbigwedge^2 (\scrS * \scrT) \simeq \tbigwedge^2 \scrS \otimes \tbigwedge^2 \scrT. \]
\end{lem}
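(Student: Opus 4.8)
The plan is to reduce both claims to the affine free case, where they reduce to an explicit polynomial identity, and then to glue using the functoriality already established in Lemma~\ref{lem:funct_isom}. Since an isomorphism of invertible sheaves and an equality in $\Disc(X)$ (an isomorphism class of pairs $(d,\scrL)$) may both be checked locally, I would fix an affine open cover $\{U_i = \Spec R_i\}$ on which $\scrS$ and $\scrT$ are free, say $S_i = \scrS(U_i) = R_i \oplus R_i x_i$ with $x_i^2 = t_i x_i - n_i$ and $T_i = \scrT(U_i) = R_i \oplus R_i y_i$ with $y_i^2 = s_i y_i - m_i$. By Construction~\ref{keycons}, $(\scrS*\scrT)(U_i) = S_i * T_i = R_i \oplus R_i w_i$ with $\trd(w_i) = s_i t_i$ and $\nrd(w_i) = m_i t_i^2 + n_i s_i^2 - 4 n_i m_i$.

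For the isomorphism of invertible sheaves, I would define a local map $\tbigwedge^2(S_i * T_i) \to \tbigwedge^2 S_i \otimes \tbigwedge^2 T_i$ by sending $1 \wedge w_i \mapsto (1 \wedge x_i) \otimes (1 \wedge y_i)$; this is an isomorphism of free rank-one $R_i$-modules. To see these glue, I would appeal to Lemma~\ref{lem:funct_isom}: a change of basis $x \mapsto u(x+r)$, $y \mapsto v(y+q)$ sends $w$ to $(uv)w' + c$ for some $c \in R$, so under the identification $S/R \simeq \tbigwedge^2 S$ of \eqref{eqn:tSR} we obtain $1 \wedge x \mapsto u(1 \wedge x')$, $1 \wedge y \mapsto v(1 \wedge y')$, and $1 \wedge w \mapsto (uv)(1 \wedge w')$ (the constant $c$ dies in the wedge). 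Thus both sides transform by the same factor $uv$ under the transition isomorphisms, the local maps agree on overlaps, and they glue to a global isomorphism $\tbigwedge^2(\scrS*\scrT) \simeq \tbigwedge^2 \scrS \otimes \tbigwedge^2 \scrT$.

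For the discriminant, I would use this same isomorphism to compare quadratic forms. By \eqref{eq:whenfree}, $d(\scrS*\scrT)$ sends $(1 \wedge w_i)^{\otimes 2}$ to $\trd(w_i)^2 - 4\nrd(w_i) = (s_i t_i)^2 - 4(m_i t_i^2 + n_i s_i^2 - 4 n_i m_i)$, whereas $d(\scrS)\otimes d(\scrT)$ sends $((1 \wedge x_i)\otimes(1 \wedge y_i))^{\otimes 2}$ to $(t_i^2 - 4 n_i)(s_i^2 - 4 m_i)$. Expanding, both equal $s_i^2 t_i^2 - 4 m_i t_i^2 - 4 n_i s_i^2 + 16 n_i m_i$, so the glued isomorphism carries $d(\scrS*\scrT)$ to $d(\scrS)\otimes d(\scrT)$; that is, it is an isomorphism of discriminants, giving $d(\scrS*\scrT) = d(\scrS)d(\scrT)$ in $\Disc(X)$.

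The only step that is not purely formal is verifying that the local wedge isomorphisms respect the gluing data, and this is precisely where Lemma~\ref{lem:funct_isom} does the work, by pinning down the scaling factor $uv$ on $w$; everything else is the one-line identity $(st)^2 - 4(mt^2 + ns^2 - 4nm) = (t^2 - 4n)(s^2 - 4m)$ together with routine bookkeeping.
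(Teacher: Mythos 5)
Your proposal is correct and follows essentially the same route as the paper: reduce to the free affine case, use the identity $(st)^2 - 4(mt^2+ns^2-4nm) = (t^2-4n)(s^2-4m)$ for the discriminant, and glue the local isomorphism $(1\wedge x)\otimes(1\wedge y) \leftrightarrow 1\wedge w$ to get the statement about $\tbigwedge^2$. The only difference is that you spell out the compatibility of the local wedge isomorphisms with the transition data (the common scaling factor $uv$ from Lemma~\ref{lem:funct_isom}), a step the paper leaves implicit with ``which glues to give the desired isomorphism globally.''
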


\begin{proof}
If $S=\scrS(U)$ and $T=\scrT(U)$ are as in Construction \ref{keycons}, then
\begin{equation} \label{eq:disccalc}
\begin{aligned}
d(S * T)((1 \wedge (x \otimes y))^{\otimes 2}) &= (st)^2 - 4(mt^2+ns^2-4nm) = (t^2-4n)(s^2-4m) \\
&= d(S)((1 \wedge x)^{\otimes 2})d(T)((1 \wedge y)^{\otimes 2})
\end{aligned}
\end{equation}
The first statement then follows.  For the second, again on affine opens we have the isomorphism
\begin{equation} \label{eqn:twedgehom}
\begin{aligned} 
\tbigwedge^2 S \otimes_R \tbigwedge^2 T &\to \tbigwedge^2 (S*T) \\
(1 \wedge x) \otimes (1 \wedge y) &\mapsto 1 \wedge w,
\end{aligned}
\end{equation}
which glues to give the desired isomorphism globally. 
\end{proof}

\begin{lem} \label{lem:quaddiscsurj}
The discriminant maps 
\begin{center}
$\disc:\scrQuad(X) \to \scrDisc(X)$ and\/ $\disc:\scrQuad(X;\scrO_X) \to \scrDisc(X;\scrO_X)$ 
\end{center}
are surjective homomorphisms of sheaves of commutative monoids.
\end{lem}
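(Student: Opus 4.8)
The plan is to handle the homomorphism claim using Lemma~\ref{lem:homquad} and to reduce surjectivity to a purely local (stalk-level) statement, exploiting that surjectivity of a morphism of sheaves is detected on stalks. For the homomorphism property, recall that the monoid operation on $\Disc(X)$ is tensor product, $(d,\scrL)*(d',\scrL')=(d\otimes d',\scrL\otimes\scrL')$, and that $\disc(\scrS)=(d(\scrS),\tbigwedge^2\scrS)$. Lemma~\ref{lem:homquad} supplies exactly the two identities $d(\scrS*\scrT)=d(\scrS)d(\scrT)$ and $\tbigwedge^2(\scrS*\scrT)\simeq\tbigwedge^2\scrS\otimes\tbigwedge^2\scrT$, and since these hold (via the canonical isomorphism \eqref{eqn:twedgehom}) on every affine open and are compatible with restriction, they assemble into the statement that $\disc$ is a morphism of presheaves of monoids, hence of the associated sheaves. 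That $\disc$ preserves identities I would check directly: $\scrO_X\times\scrO_X=\scrO_X[x]/(x^2-x)$ has $t=1$, $n=0$, so by \eqref{eq:whenfree} its discriminant is $t^2-4n=1$ with $\tbigwedge^2\simeq\scrO_X$, giving the identity $(1,\scrO_X)$ of $\Disc(X)$; this computation stays inside the subsheaves with trivial top exterior power, so it also handles the $\scrO_X$-version.

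For surjectivity, I would fix a point $x\in X$ and a germ of a discriminant, represented on some affine open $U=\Spec R$ by a pair $(d,\scrL)$ with $\scrL|_U=Re$ free. Setting $a=d(e\otimes e)\in R$, the defining square-modulo-$4$ condition of Definition~\ref{def:defndisc} furnishes $t\in R$ with $a\equiv t^2\pmod{4R}$; then $n=(t^2-a)/4$ lies in $R$, and $S=R[x]/(x^2-tx+n)$ is a free quadratic $R$-algebra. By \eqref{eq:whenfree} its discriminant sends $(1\wedge x)^{\otimes 2}\mapsto t^2-4n=a$, so the $R$-module isomorphism $\tbigwedge^2 S\xrightarrow{\sim}\scrL|_U$ determined by $1\wedge x\mapsto e$ is an isometry of discriminants $\disc(S)\xrightarrow{\sim}(d,\scrL)|_U$. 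This produces a germ in $\scrQuad(X)_x$ mapping to the given one, so $\disc$ is surjective on stalks and hence surjective as a map of sheaves; taking $\scrL=\scrO_X$ (so $e=1$) gives the surjectivity of $\disc:\scrQuad(X;\scrO_X)\to\scrDisc(X;\scrO_X)$ verbatim.

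The only real content is the local construction of $S$, and the single point that must be checked is that the square-modulo-$4$ hypothesis is precisely what allows $a=t^2-4n$ to be solved with $n\in R$; everything else is formal bookkeeping. In particular, no gluing of the local algebras $S$ into a global quadratic algebra is required, since surjectivity of a sheaf morphism is a stalk condition --- this is exactly why the statement is phrased for the sheaves $\scrQuad$ and $\scrDisc$ rather than for global sections. The main thing to state with care is the dictionary between $d$ as a section of $(\scrL\spcheck)^{\otimes 2}$ and its scalar value $a$ on the generator $e$, together with the matching identification $\tbigwedge^2 S\simeq\scrL$ that upgrades the numerical equality into an isomorphism of discriminants.
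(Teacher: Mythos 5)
Your proposal is correct and follows essentially the same route as the paper: the homomorphism property is delegated to Lemma \ref{lem:homquad}, and surjectivity is checked locally by writing the germ of a discriminant as $a=t^2-4n$ (which is exactly what the square-modulo-$4$ condition provides) and exhibiting the preimage $R[x]/(x^2-tx+n)$. The extra details you supply --- the explicit identity check and the isometry $1\wedge x\mapsto e$ --- are fine elaborations of what the paper leaves implicit.
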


\begin{proof}
The fact that these maps are homomorphisms of monoids follows locally from Lemma \ref{lem:homquad}.  We show these maps are surjective locally, and for that we may assume $X=\Spec R$ and $L=Re$.  We refer to Lemma \ref{lem:orbitsdisc} and Example \ref{exm:orbitsdisc}: given any $d \in R$ such that $d=t^2-4n$ with $t,n \in R$, we have the quadratic ring $R[x]/(x^2-tx+n)$ of discriminant $d$.  
\end{proof}

We are now ready to prove Theorem A.

\begin{thm} \label{thma}
Construction \textup{\ref{keycons}} is the unique system of binary operations 
\[ * = *_X : \Quad(X) \times \Quad(X) \to \Quad(X), \]
one for each scheme $X$, such that:
\begin{enumroman}
\item $\Quad(X)$ is a commutative monoid under $*$, with identity element the class of ${\scrO_X \times \scrO_X}$;
\item For each morphism $f:X \to Y$ of schemes, the diagram
\[
\xymatrix{
\Quad(Y) \times \Quad(Y) \ar[r]^(0.63){*_Y} \ar[d] & \Quad(Y) \ar[d]^{f^*} \\
\Quad(X) \times \Quad(X) \ar[r]^(0.63){*_X} & \Quad(X) 
} \]
is commutative; and 
\item If $X=\Spec R$ and $S,T$ are separable quadratic $R$-algebras with standard involutions $\sigma,\tau$, then $S*T$ is the fixed subring of $S \otimes_R T$ under $\sigma \otimes \tau$.
\end{enumroman}
\end{thm}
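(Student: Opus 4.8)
The plan is to prove the two halves separately: first that Construction \ref{keycons} does satisfy (i)--(iii) (existence), and then that these three properties pin it down (uniqueness). For existence, axiom (i) is the commutative-monoid statement of Lemma \ref{lem:check_monoid} together with the corollary following Lemma \ref{lem:ref_glue}, and axiom (ii) is the functoriality recorded in those same results. The content is axiom (iii), which I would check locally, reducing to $X=\Spec R$ with $S,T$ free and separable. The remark after Construction \ref{keycons} notes that the fixed element
\[ w = x \otimes y + \sigma(x) \otimes \tau(y) \in S \otimes_R T \]
satisfies relation \eqref{eq:keydefn}; since $(\sigma \otimes \tau)(w)=w$ it lies in the fixed subring, so it remains only to see that $\{1,w\}$ is an $R$-basis of $(S \otimes_R T)^{\sigma \otimes \tau}$. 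This is a direct computation in the basis $1,x,y,xy$ of $S \otimes_R T$: the hypothesis that $d(S)=t^2-4n$ and $d(T)=s^2-4m$ are units forces the fixed subring to be free of rank $2$ on $1,w$ (one treats the cases $2 \in R^\times$ and $2 \notin R^\times$ separately, the latter using $t,s \in R^\times$). This yields an isomorphism $S*T \xrightarrow{\sim} (S \otimes_R T)^{\sigma \otimes \tau}$, which is (iii).

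For uniqueness, let $*'$ be a second system satisfying (i)--(iii). By the functoriality (ii) and the gluing of Lemma \ref{lem:ref_glue}, it suffices to treat the universal free affine case: set $R_0 = \Z[t_1,n_1,t_2,n_2]$ and let $S_1 = R_0[x]/(x^2-t_1x+n_1)$ and $S_2 = R_0[y]/(y^2-t_2y+n_2)$, so that every pair of based free quadratic algebras is a pullback of $(S_1,S_2)$. Because finitely generated projective $R_0$-modules are free (Quillen--Suslin), both $S_1 *' S_2$ and $S_1 * S_2$ are free of rank $2$, say with invariants $(T',N')$ and $(T,N)=(t_1t_2,\ n_1t_2^2+n_2t_1^2-4n_1n_2)$. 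Writing $d_i=t_i^2-4n_i$ and $R_0' = R_0[1/(d_1d_2)]$, the algebras $S_1,S_2$ become separable over $R_0'$, so by (iii) and the existence half just proved, $S_1 *' S_2$ and $S_1 * S_2$ are isomorphic over $R_0'$.

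The crux, and the main obstacle, is to descend this isomorphism from $R_0'$ back to $R_0$: a free quadratic algebra is not determined by its generic fibre, and a priori the discriminant of $S_1 *' S_2$ could differ from $d_1d_2$ by a square that is a unit only after inverting $d_1d_2$. I would control this as follows. Since $R_0^{\times 2}$ is trivial, the discriminant of a free quadratic $R_0$-algebra is a well-defined element of $R_0$, and an isomorphism over $R_0'$ alters it by the square of a unit of $R_0'$. As $R_0$ is a UFD in which $d_1,d_2$ are non-associate primes, $(R_0')^\times = \{\pm d_1^i d_2^j\}$, so the requirement that the discriminant of $S_1 *' S_2$ already lie in $R_0$ forces it to equal $d_1^{\,2i+1}d_2^{\,2j+1}$ for some $i,j \ge 0$. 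Now specialize along $R_0 \to \Z[t_1,n_1]$, $t_2 \mapsto 1,\ n_2 \mapsto 0$: here $S_2$ becomes the split algebra $R[y]/(y^2-y)$, the identity object, so by axiom (i) the product $S_1 *' S_2$ specializes to $S_1$, of discriminant $d_1$. Since $d_2 \mapsto 1$, this gives $d_1^{\,2i+1}=d_1$, hence $i=0$, and symmetrically $j=0$.

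Thus the two products have equal discriminant $d_1d_2$ over $R_0$. Consequently the scaling unit in the $R_0'$-isomorphism $w' \mapsto \alpha w + \gamma$ satisfies $\alpha^2=1$, so $\alpha=\pm1 \in R_0^\times$; and the translation parameter $\gamma \in R_0'$, determined by $2\gamma = T' - \alpha T \in R_0$, must in fact lie in $R_0$, since $d_1,d_2$ are primes coprime to $2$ and hence cannot appear in the denominator of $\gamma$. The defining relations $T'=\alpha T + 2\gamma$ and $N'=\alpha^2 N - \gamma^2 + T'\gamma$ then hold in $R_0$, producing an isomorphism $S_1 *' S_2 \cong S_1 * S_2$ over $R_0$. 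Pulling back along arbitrary base changes and gluing over an affine trivializing cover (as in Lemma \ref{lem:ref_glue}, using that $*'$ is defined on isomorphism classes and so is natural in its arguments) extends this to $*'_X = *_X$ for every scheme $X$.
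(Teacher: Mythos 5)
Your proposal is correct and follows essentially the same route as the paper: reduce via functoriality to the universal based algebras over $\Z[t,n,s,m]$, use triviality of the Picard group of that ring to get freeness of the product, apply axiom (iii) after inverting $D=(t^2-4n)(s^2-4m)$, and then clear denominators in this UFD (Gauss's lemma at the prime $2$) together with specialization to the identity element to pin everything down over the whole ring --- the only differences are bookkeeping (you track the discriminant and the change-of-basis pair $(\alpha,\gamma)$ where the paper tracks the generator $(az+b)/D^k$ directly, and you invoke Quillen--Suslin where the paper uses seminormality). One small caveat: in your existence check of (iii), the claim that $2\notin R^\times$ forces $t,s\in R^\times$ is false as stated (e.g.\ $R=\Z$, $t=3$, $n=2$); the correct statement is that $t$ and $2$ are comaximal when $t^2-4n\in R^\times$, so one should verify freeness of the fixed subring on $1,w$ after localizing at each prime, which repairs the computation.
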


\begin{proof}
By (\ref{eq:freequadtn}), the universal free quadratic algebra with basis is the algebra 
\[ S\sbuniv=R\sbuniv[x]/(x^2-tx+n) \] 
where $R\sbuniv=\Z[t,n]$ is the polynomial ring in two variables over $\Z$: in other words, for any commutative ring $R$ and free quadratic $R$-algebra $S$ with basis, there is a unique map $f:R\sbuniv \to R$ such that $S=f^* S\sbuniv  = S\sbuniv \otimes_{f,R\sbuniv} R$.  By (ii), then, following this argument on an open affine cover, we see that the monoid structure on $\Quad(\Spec R\sbuniv)$ determines the monoid structure for all schemes $X$.  

Dispensing with subscripts, consider $S=R[x]/(x^2-tx+n)$ and $T = R[y]/(y^2-sy+m)$ where $R=\Z[t,n,s,m]$; we show there is a unique way to define $S * T$.  

To begin, we claim that $S*T$ is free over $R$.  As $R$-modules, we can write $S*T = R \oplus I z$ where $I \subseteq F=\Frac(R)$ is a projective $R$-submodule of $F$ and the class $[I] \in \Pic(R)$ well-defined.  But $\Pic(\Z[t,n,s,m]) \simeq \Pic(\Z)=\{0\}$ ($\Z$ is \emph{seminormal} \cite{GilmerHeitmann}), so $I \simeq R$.

Now let 
\[ D=(t^2-4n)(s^2-4m). \]  
Then $S[1/D]$ and $T[1/D]$ are separable over $R[1/D]$, with involutions $\sigma(x) = t-x$ and $\tau(y)=s-y$.    By (iii), the product $S[1/D] * T[1/D]$ is the subring of $S[1/D] \otimes_{R[1/D]} T[1/D]$ generated by
\[ z = x \otimes y + \sigma(x) \otimes \tau(y) = 2(x \otimes y) - s(x \otimes 1) - t(1 \otimes y). \]
Then
\begin{align*} 
z^2 &= x^2 \otimes y^2 + 2nm + \sigma(x)^2 \otimes \tau(y)^2 \\
&= (tx-n) \otimes (sy-m) + 2nm + (t\sigma(x) - n) \otimes (s\tau(y)-m) \\
&= ts(x \otimes y + \sigma(x) \otimes \tau(y)) - mt((x + \sigma(x)) \otimes 1) 
- ns(1 \otimes (y + \tau(y))) + 4nm \\
&= (st)z - (mt^2 + ns^2 - 4nm).
\end{align*}
In particular, $S[1/D] * T[1/D] \simeq R[1/D] \oplus R[1/D] z$.  

By (ii), $(S * T)[1/D] \simeq S[1/D] * T[1/D]$, and we have $S*T \subseteq (S*T)[1/D]$.  Since $R$ is a UFD and $S*T$ is free over $R$, it is generated as an $R$-algebra by an element of the form $(az+b)/D^k$ for some $a,b \in R$ and $k \in \Z_{\geq 0}$.  But by (\ref{eq:disccalc}), $d((S*T)[1/D]) = D$, so $d(S*T) = (a/D^k)^2 D \in R$, thus $a/D^k \in R$.  Since $\trd((az+b)/D^k) = (ast+2b)/D^k \in R$, we conclude that $2b/D^k \in R$; since $D$ is not divisible by $2$, by Gauss's lemma we have $b/D^k \in R$, so without loss of generality we may take $b=0$ and $S*T$ is generated by $az$ for some $a \in R$.  Since $R^\times = \{\pm 1\}$ and $(S*T)[1/D]$ is generated by $z$, we must have $a=D^k$ for some $k \in \Z_{\geq 0}$.  Finally, we consider 
\[ \frac{\Z[x]}{(x^2-tx+n)} * \frac{\Z[y]}{(y^2-y)} = \frac{\Z[z]}{(z^2 - D^k t z + D^{2k}n)} \]
over $\Z[t,n]$.  The algebra on the right has discriminant $D^{2k}(t^2-4n)$, but by (i), it must be isomorphic to the algebra on the left of discriminant $(t^2-4n)$, so we must have $D^{2k}=1$, so $k=0$.  Therefore $S*T = R \oplus R z$.  
\end{proof}

Having given the monoid structure, we conclude this section by proving Theorem B.

\begin{thm} \label{thmb}
Let $X$ be a scheme.  Then the following diagram of sheaves of commutative monoids is functorial and commutative with exact rows and surjective columns:
\[
\xymatrix{
\scrQuad(X;\scrO_X) \ar[r] \ar[d]^{\disc} & \scrQuad(X) \ar[r]^-{\wedge^2}  \ar[d]^{\disc} & \scrPic(X) \ar[d]^{\wr} \\
\scrDisc(X;\scrO_X) \ar[r] & \scrDisc(X) \ar[r] & \scrPic(X) 
} \]
\end{thm}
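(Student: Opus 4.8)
The plan is to verify the four assertions of the theorem—that every arrow is a homomorphism of sheaves of monoids, that the two squares commute, that the whole diagram is natural in $X$, and finally that the rows are exact and the columns surjective—by reducing each to a fact already established, deferring the only genuinely new input (exactness of the top row as a sequence of sheaves) to a stalkwise computation. I would begin by recording that most of the diagram has already been built: the two discriminant columns $\disc\colon\scrQuad(X)\to\scrDisc(X)$ and $\disc\colon\scrQuad(X;\scrO_X)\to\scrDisc(X;\scrO_X)$ are surjective homomorphisms of sheaves of monoids by Lemma \ref{lem:quaddiscsurj}, and the right-hand column is the identity on $\scrPic(X)$, so ``surjective columns'' is immediate. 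That $\wedge^2$ is a monoid homomorphism $\scrQuad(X)\to\scrPic(X)$ and that $\disc$ respects the product $*$ of Construction \ref{keycons} are precisely the two conclusions of Lemma \ref{lem:homquad}; the forgetful map $p\colon\scrDisc(X)\to\scrPic(X)$ is a homomorphism as in Proposition \ref{prop:propexactdiscs}, whose statement also gives exactness of the bottom row at the level of global sections.

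Commutativity of the two squares is a matter of unwinding definitions. For a quadratic $\scrO_X$-algebra $\scrS$ one has $\disc(\scrS)=(d(\scrS),\tbigwedge^2\scrS)$ by Proposition \ref{prop:isdiscnonfree}, so $p(\disc(\scrS))$ is the class of $\tbigwedge^2\scrS$, which is exactly the image of $\scrS$ under the top map $\wedge^2$; this gives the right square. If moreover $\tbigwedge^2\scrS\simeq\scrO_X$, then $\disc(\scrS)$ has underlying invertible sheaf $\scrO_X$ and hence lands in $\scrDisc(X;\scrO_X)$ compatibly with the inclusions, which gives the left square. Functoriality in $X$ follows because each ingredient—the product of Construction \ref{keycons}, the discriminant of Proposition \ref{prop:isdiscnonfree}, the exterior square, the two inclusions, and $p$—is defined by a local formula that commutes with pullback; concretely I would note that $\disc$ and $\tbigwedge^2$ commute with $f^*$ for every morphism $f\colon X\to Z$ and that the module identifications used in their definitions are natural, so that $f^*$ transports the whole diagram over $Z$ to the one over $X$.

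The remaining and only substantial point is exactness of the two rows as sequences of sheaves of monoids, which I would check on stalks. At a point $x\in X$ the local ring $\scrO_{X,x}$ has trivial Picard group, so $\scrPic(X)_x$ is the trivial monoid; moreover every locally free module over $\scrO_{X,x}$ is free, so along a cofinal system of shrinking neighborhoods every germ of a quadratic algebra trivializes its $\tbigwedge^2$ and every germ of a discriminant trivializes its underlying line bundle. Consequently both left-hand inclusions $\scrQuad(X;\scrO_X)_x\to\scrQuad(X)_x$ and $\scrDisc(X;\scrO_X)_x\to\scrDisc(X)_x$ are isomorphisms on stalks. Each row therefore becomes on stalks a sequence $A\xrightarrow{\sim}B\to 0$ in which the first map is an isomorphism and the target is trivial; such a sequence is exact in the monoidal sense of Section 1, since $f$ is injective, $g$ is vacuously surjective, and $K_g=B\times B=I_f$ (the last equality because $f$ is surjective and $B$ is commutative). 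As exactness of sheaves of monoids is tested on stalks, both rows are exact, with the bottom row recovering on sections the statement of Proposition \ref{prop:propexactdiscs}.

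The main obstacle I anticipate is the bookkeeping in this stalk computation: one must confirm that the stalks of the sheafifications $\scrQuad(X)$ and $\scrDisc(X)$ are the filtered colimits over shrinking neighborhoods in which each germ actually acquires a trivialization of its line bundle, so that the left-hand maps are genuine isomorphisms on stalks rather than merely surjections, and that an isomorphism of germs in $\scrDisc(X)$ between two trivially-bundled discriminants is realized by a unit and hence is an isomorphism in $\scrDisc(X;\scrO_X)$. Once this is verified, the exactness of both rows is formal, and together with the preceding paragraphs the theorem follows.
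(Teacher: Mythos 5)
Your proposal is correct, and its skeleton matches the paper's (very terse) proof: surjectivity of the columns from Lemma \ref{lem:quaddiscsurj}, the homomorphism and commutativity statements from Lemma \ref{lem:homquad} and Proposition \ref{prop:isdiscnonfree}, and the bottom row tied to Proposition \ref{prop:propexactdiscs}. Where you genuinely diverge is in how exactness of the rows as sequences of \emph{sheaves} is established. The paper's route is to prove exactness at the level of sections over every open $U$ --- in Proposition \ref{prop:propexactdiscs} the image congruence is verified by taking $\delta=\delta'=0$, i.e.\ by exploiting the absorbing element, and ``the same (trivial) argument'' (using $\scrO_U[x]/(x^2)$, whose product with any $\scrS$ depends only on $\tbigwedge^2\scrS$) handles the top row; the presheaf quotients $U\mapsto \Disc(U)/\Disc(U;\scrO_U)$ and $U\mapsto\Quad(U)/\Quad(U;\scrO_U)$ are then literally the presheaf $U\mapsto\Pic(U)$, which sheafifies to $\scrPic(X)$. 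You instead check exactness on stalks, observing that $\scrPic(X)_x$ is trivial and that the left-hand inclusions become isomorphisms on stalks, so each row degenerates to $A\xrightarrow{\sim}B\to 0$, which is exact in the monoidal sense ($I_f=B\times B=K_g$). Both arguments are valid and essentially formal; yours is arguably cleaner for the sheaf-theoretic statement and avoids invoking the absorbing element, but it is specific to the sheaf version and does not recover the global-sections exactness asserted in Theorem B of the introduction (where only the columns are weakened to local surjectivity), which the paper's section-by-section argument delivers for free. Your flagged worry about the stalk bookkeeping is easily discharged: sheafification preserves stalks, every invertible sheaf trivializes on a neighborhood of any point, and the left-hand maps are injections of subsets at every level, hence injective after the filtered colimit.
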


\begin{proof}
The exactness of the bottom row follows from Proposition \ref{prop:propexactdiscs}.  The exactness of the top row and commutativity of the diagram follows by the same (trivial) argument.  Surjectivity follows from Lemma \ref{lem:quaddiscsurj}.
\end{proof}








\section{Proof of Theorem C}

In this section, we prove Theorem C and conclude with some final discussion.
 
Let $R$ be a commutative ring and let ${R[4]=\{a \in R : 4a=0\}}$.  Let
\[ \wp(R)=\{r+r^2 : \text{$r \in R$}\}. \]
and let $\wp(R)[4] = \wp(R) \cap R[4]$.  Note that $4(r+r^2)=0$ if and only if $(1+2r)^2=1$, so we have equivalently 
\[ \wp(R)[4]=\{r+r^2 : \text{$r \in R$ and $(1+2r)^2=1$}\}. \] 

\begin{lem} 
$\wp(R)[4]$ is a subgroup of $R[4]$ under addition.
\end{lem}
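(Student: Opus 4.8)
The plan is to use that $R[4]$ is an abelian group under addition in which every element has order dividing $4$. For such a group, a subset that contains the identity and is closed under the operation is automatically a subgroup: if $n \in \wp(R)[4]$ then $4n=0$, so $-n = 3n = n+n+n$ lies in $\wp(R)[4]$ as soon as closure under addition is established. Thus I would reduce the statement to two checks, that $0 \in \wp(R)[4]$ and that $\wp(R)[4]$ is closed under addition. The first is immediate, since $0 = 0 + 0^2$ and $4 \cdot 0 = 0$.

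The content is therefore closure under addition, and the key step is producing, from $n_1 = r_1 + r_1^2$ and $n_2 = r_2 + r_2^2$ in $\wp(R)[4]$, a single element $s \in R$ with $s + s^2 = n_1 + n_2$. The candidate is suggested by the alternative description $\wp(R)[4] = \{r+r^2 : (1+2r)^2 = 1\}$: setting $u_i = 1 + 2r_i$, the hypothesis $4n_i = 0$ says exactly that $u_i^2 = 1$, and the product of two square roots of $1$ is again a square root of $1$. Since
\[ u_1 u_2 = (1+2r_1)(1+2r_2) = 1 + 2(r_1 + r_2 + 2 r_1 r_2), \]
I would take $s = r_1 + r_2 + 2 r_1 r_2$, so that $1 + 2s = u_1 u_2$ and hence $(1+2s)^2 = u_1^2 u_2^2 = 1$; in particular $4(s+s^2)=0$, which already places $s+s^2$ in $R[4]$.

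It then remains to check that this $s$ actually represents the sum, that is, $s + s^2 = n_1 + n_2$. Expanding directly gives
\[ s + s^2 = n_1 + n_2 + 4 r_1 r_2 (1+r_1)(1+r_2), \]
and the error term vanishes because $4 r_1(1+r_1) = 4(r_1 + r_1^2) = 4 n_1 = 0$. This single computation is the only real obstacle, and it is precisely where the defining $4$-torsion condition enters: over a general ring $\wp(R)$ is \emph{not} closed under addition, the obstruction being exactly such cross terms, and it is the hypothesis $4n_i = 0$ that kills them. Assembling the pieces---$0 \in \wp(R)[4]$, closure under addition via $s$, and inverses via $-n = 3n$---then shows that $\wp(R)[4]$ is a subgroup of $R[4]$.
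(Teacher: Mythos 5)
Your proof is correct and follows essentially the same route as the paper: the same witness $s=r_1+r_2+2r_1r_2$ for closure under addition (with the cross term $4r_1r_2(1+r_1)(1+r_2)=4n_1n_2$ killed by $4n_1=0$), and the same observation that $-n=3n$ handles inverses. The only cosmetic difference is that you motivate the choice of $s$ via the units $u_i=1+2r_i$ with $u_i^2=1$, which the paper records just before the lemma but does not use explicitly in its proof.
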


\begin{proof}
We have $0=0+0^2 \in \wp(R)[4]$.  If $n=r+r^2 \in \wp(R)[4]$ and $m=s+s^2 \in \wp(R)[4]$ then 
\[ (r+s+2rs)+(r+s+2rs)^2 = (r + r^2) + (s+ s^2) + 4(r+r^2)(s+s^2) = n+m \]
and $4(n+m)=0$, so $n+m \in \wp(R)[4]$.  Finally, if $n \in \wp(R)[4]$ then $-n=3n \in \wp(R)[4]$ by the preceding sentence.
\end{proof}

We define the \defi{Artin-Schreier group} $\AS(R)$ to be the quotient 
\[ \AS(R)=\frac{R[4]}{\wp(R)[4]}. \]
Since $2R[4] \subseteq \wp(R)[4]$, the group $\AS(R)$ is an elementary abelian $2$-group.

We define a map $i:\AS(R) \to \Quad(R;R)$ sending the class of $n \in \AS(R)$ to the isomorphism class of the algebra $S=R[x]/(x^2-x+n)$.

\begin{prop} \label{prop:ASiwelldef}
The map $i:\AS(R) \to \Quad(R,R)$ is a (well-defined) injective map of commutative monoids.
\end{prop}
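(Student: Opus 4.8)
The plan is to verify three things: that $i$ is well-defined on the quotient $\AS(R) = R[4]/\wp(R)[4]$, that it is injective, and that it is a homomorphism of monoids (which here means a homomorphism of groups, since $\AS(R)$ is a group landing inside the monoid $\Quad(R;R)$). The key computational input is Construction \ref{keycons}: for the special algebras $S_n = R[x]/(x^2-x+n)$ arising as images of $i$, we have $t=s=1$ (the trace is $1$), so the multiplication rule \eqref{eq:keydefn} simplifies dramatically. With $t=s=1$ and norms $n,m$, the product $S_n * S_m$ is $R \oplus Rw$ with $w^2 = w - (m + n - 4nm)$, so that $S_n * S_m \simeq R[x]/(x^2-x+(n+m-4nm)) = S_{n+m-4nm}$. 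Since $n,m \in R[4]$ we have $4nm = 0$, and the product is exactly $S_{n+m}$. This is the heart of the homomorphism claim.

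First I would establish the homomorphism property using the computation just described: $i(n) * i(m) = S_n * S_m \simeq S_{n+m} = i(n+m)$, using $4nm=0$ for $n,m \in R[4]$; and $i(0) = R[x]/(x^2-x) = R \times R$ is the identity of $\Quad(R;R)$ by Lemma \ref{lem:check_monoid}. This simultaneously proves the map respects the operation and sends identity to identity. Second, for well-definedness, I must check that if $n$ and $n'$ differ by an element $p = r+r^2 \in \wp(R)[4]$, then $S_n \simeq S_{n'}$ as $R$-algebras. The change-of-basis formula \eqref{eq:changeofbasis} with $(t,n)=(1,n)$ and a substitution $x \mapsto u(x+r)$ is the tool: I expect the unit $u$ forcing the trace to remain $1$ to be $u=(1+2r)^{-1}$, which exists precisely because $p \in \wp(R)[4]$ forces $(1+2r)^2=1$ (as noted in the excerpt), so $1+2r$ is a unit with inverse itself. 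Tracking the norm under this change of basis should produce $n' = n + (r+r^2) \cdot(\text{unit factor})$, collapsing to $n' = n \pm p$ modulo the subgroup, so that $i(n)=i(n')$ in $\Quad(R;R)$.

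The main obstacle, and the step requiring the most care, is injectivity: I must show that if $S_n \simeq S_{n'}$ as $R$-algebras then $n \equiv n' \pmod{\wp(R)[4]}$. Here an $R$-algebra isomorphism must send $x$ to some $x' = ux+r$ with $u \in R^\times$, $r \in R$, and preserve the quadratic relation. Matching traces gives $1 = u \cdot 1 = u$ forced only up to the relation $1 = u(1+2r)$ from \eqref{eq:changeofbasis}, so $u = (1+2r)^{-1}$ and $1+2r$ is a unit; matching norms via \eqref{eq:changeofbasis} then yields a relation of the form $n' = u^2(n + r + r^2) = u^2 n + u^2(r+r^2)$. The delicate point is to convert this into the statement $n - n' \in \wp(R)[4]$: one uses that $n,n' \in R[4]$ to control the $u^2$ factors modulo $4$, and that $r+r^2$ times a unit congruent to $1$ mod $4$ stays in $\wp(R)[4]$. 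I would isolate this as the crux and verify it by reducing the identity modulo $4R$, where $u^2 \equiv 1$ since $u=(1+2r)^{-1}$ and $(1+2r)^2 \equiv 1 \pmod 4$, so that $n - n' \equiv r+r^2 \pmod{4R}$ with both sides annihilated by $4$, placing the difference in $\wp(R)[4]$.
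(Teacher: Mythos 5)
Your overall route is the same as the paper's: the homomorphism property comes from Construction \ref{keycons} with $t=s=1$ and $4nm=0$, and both well-definedness and injectivity are reduced to the change-of-basis formula \eqref{eq:changeofbasis} with $u(1+2r)=1$. The homomorphism and well-definedness parts of your argument are essentially correct.

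The gap is in the last step of injectivity. From $n'\equiv n+(r+r^2)\pmod{4R}$ together with $4(n-n')=0$ you conclude $n-n'\in\wp(R)[4]$, but that inference is invalid: $\wp(R)[4]$ consists of elements that are \emph{exactly} of the form $s+s^2$ (and killed by $4$), and an element merely congruent mod $4R$ to some $r+r^2$ need not itself be of that form. Your auxiliary claim that ``$r+r^2$ times a unit congruent to $1$ mod $4$ stays in $\wp(R)[4]$'' has the same defect, and you also never verify that $4(r+r^2)=0$, which does not follow from the mod-$4R$ congruence alone. The repair is to use the exact identity rather than its reduction mod $4$, which is what the paper does: from $u(1+2r)=1$ and $n'=u^2(n+r+r^2)$, multiply by $(1+2r)^2=1+4(r+r^2)$ to get $n'\bigl(1+4(r+r^2)\bigr)=n+r+r^2$; since $4n'=0$ the left-hand side is just $n'$, so $n'=n+r+r^2$ \emph{on the nose}. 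Then $4(r+r^2)=4(n'-n)=0$, equivalently $(1+2r)^2=1$, and $n'-n=r+r^2\in\wp(R)[4]$ exactly. The same exact identity also tightens your well-definedness step, where $u^2=(1+2r)^{-2}=1$ exactly, not merely mod $4$.
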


\begin{proof}
Let $S=i(n)=R[x]/(x^2-x+n)$ and $T=i(m)=R[y]/(y^2-y+m)$ with $n,m \in \AS(R)$.  Then $S \simeq T$ if and only if $y=u(x+r)$ for some $u \in R^\times$ and $r \in R$, which by (\ref{eq:changeofbasis}) holds if and only if $u(1+2r)=1$ and $u^2(n+r+r^2)=m$; these are further equivalent to $1+2r \in R^\times$ and 
\[ n+r+r^2=m(1+2r)^2 = (1+4r+4r^2)m. \]
But $4m=0$ so $n+r+r^2=m$ and since $4n=0$ we have $4(r+r^2)=0$.  Thus $S \simeq T$ if and only if $(1+2r)^2=1$ and $n+r+r^2=m$, as desired.  It follows from Construction \ref{keycons} that $S * T = R[w]/(w^2-w+(n+m))$, since $4nm=0$, and $i(0)=R[w]/(w^2-w)$ is the identity, so $i$ is a homomorphism of monoids.
\end{proof}

We now prove Theorem C, and recall $\Quad(R;R)$ is the set of isomorphism classes of free quadratic R-algebras.

\begin{thm} \label{thmc}
Let $R$ be a commutative ring and let $d \in R$ be a discriminant.  Then the fiber $\disc^{-1}(d)$ of the map
\[ \disc:\Quad(R;R) \to \Disc(R;R) \]
above $d$ has a unique action of the group $\AS(R)/\ann_R(d)[4]$ compatible with the inclusion of monoids $\AS(R) \hookrightarrow \Quad(R;R)$.
\end{thm}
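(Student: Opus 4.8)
The plan is to build the action out of the monoid structure and then show it descends. For a class $n \in \AS(R)$ and an algebra $S$ representing a point of the fiber $\disc^{-1}(d)$, I set $n \cdot S = i(n) * S$; this is well-defined on isomorphism classes by Lemma \ref{lem:funct_isom}, and it depends only on the class of $n$ in $\AS(R)$ by Proposition \ref{prop:ASiwelldef}. First I would confirm that it preserves the fiber: since $i(n) = R[x]/(x^2 - x + n)$ has $\trd = 1$ and $\nrd = n$ with $4n = 0$, its discriminant is $1 - 4n = 1$, so Lemma \ref{lem:homquad} gives $d(i(n) * S) = 1 \cdot d(S) = d(S)$. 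Because $\AS(R)$ is a group and $i$ is a homomorphism of monoids, we have $i(0) * S \simeq S$ and $i(n) * (i(m) * S) \simeq i(n+m) * S$, so this is genuinely an action of $\AS(R)$ on $\disc^{-1}(d)$.

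The heart of the argument is the kernel computation showing that $\ann_R(d)[4]$ acts trivially. Fix $S$ in the fiber; since the fiber sits over the class $d R^{\times 2}$, after rescaling the basis I may assume $S = R[x]/(x^2 - tx + c)$ with $t^2 - 4c = d$ on the nose. Taking $T = i(n)$, for which $s = \trd(y) = 1$ and $m = \nrd(y) = n$, Construction \ref{keycons} yields
\[ S * i(n) = R[w]/\bigl(w^2 - tw + (nt^2 + c - 4cn)\bigr). \]
For $n \in \ann_R(d)[4]$ we have $4n = 0$ and $dn = 0$; writing $d = t^2 - 4c$, the second condition gives $t^2 n = 4cn$, and $4n = 0$ forces $4cn = 0$, whence $t^2 n = 0$. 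Therefore $nt^2 + c - 4cn = c$ and $S * i(n) \simeq S$. I expect this to be the main obstacle, and it is really a matter of careful bookkeeping: one must arrange $d(S) = d$ exactly so that the two congruences $4n = 0$ and $dn = 0$ conspire to kill the correction term in the norm.

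To finish I would assemble existence and uniqueness of the quotient action. Well-definedness of $i$ on $\AS(R)$ already shows $\wp(R)[4]$ acts trivially, and the previous step adds that $\ann_R(d)[4]$ acts trivially, so the action of $\AS(R)$ factors through the quotient group $\AS(R)/\ann_R(d)[4] = R[4]/(\wp(R)[4] + \ann_R(d)[4])$, producing the asserted action. For uniqueness, compatibility with the inclusion $\AS(R) \hookrightarrow \Quad(R;R)$ forces the action of the class of $n$ to be multiplication by $i(n)$; since that is precisely the action constructed above, any compatible action of $\AS(R)/\ann_R(d)[4]$ must coincide with it.
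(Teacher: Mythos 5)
Your proof is correct and follows essentially the same route as the paper's: both hinge on the computation from Construction \ref{keycons} that $S * i(m) = R[w]/(w^2 - tw + (dm + n))$ when $4m = 0$ (so that $\ann_R(d)[4]$ visibly acts trivially), and both descend to isomorphism classes via Proposition \ref{prop:ASiwelldef}. The only difference is organizational --- the paper first sets up the action on algebras-with-basis and describes its orbits before descending, whereas you work directly on isomorphism classes --- but the key identities and lemmas invoked are the same.
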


\begin{proof}
A (free) quadratic $R$-algebra with basis $1,x$ such that $(x-\sigma(x))^2=d$ is of the form $S=R[x]/(x^2-tx+n)$ with $t^2-4n=d$.  Let $m \in R[4]$.  Then by Construction \ref{keycons} we have
\[ S * (R[y]/(y^2-y+m)) = (R[y]/(y^2-y+m)) * S = R[w]/(w^2-tw + dm+n) \]
since $4m=0$ so $dm=(t^2-4n)m=t^2m$.  Thus we have an action of $R[4]$ on the set of these quadratic $R$-algebras with basis and a free action of $R[4]/\ann_R(d)[4]$.  Two quadratic $R$-algebras $S$ and $S'$ are in the same orbit if and only if $t'=t$ and $n'=dm+n$ for some $m \in R[4]$ if and only if $t'=t$ and $n'-n \in dR[4]$; therefore, the orbits are indexed noncanonically by the set 
\[ \{t \in R : t^2 \equiv d \psmod{4R}\} \times R[4]/dR[4]. \]
We now descend to isomorphism classes: by Proposition (\ref{prop:ASiwelldef}), the monoid multiplication $*$ is well-defined on isomorphism classes, giving the unique action of $\AS(R)/\ann_R(d)[4]$ on the fiber over $d$.  
\end{proof}

Under favorable hypotheses, the action of $\AS(R)$ is free, and so we make the following definition.

\begin{defn}
An element $t \in R$ is \defi{sec} (an abbreviation for \defi{square even cancellative}) if the following two conditions hold:
\begin{enumroman}
\item $t$ is a nonzerodivisor, and
\item $r^2,2r \in tR$ implies $r \in tR$ for all $r \in R$.
\end{enumroman}

A quadratic $R$-algebra $S$ is \defi{sec} if $\disc(S)$ is a nonzerodivisor and there a basis $1,x$ for $S$ such that $\trd(x)$ is sec.  A quadratic $\scrO_X$-algebra $\scrS$ is \defi{sec} if $\scrS$ is sec on an open affine cover of $X$.
\end{defn}

\begin{prop}
Let $R$ be a commutative ring.  Then the action of $\AS(R)$ on the set of sec quadratic $R$-algebras of discriminant $d$ is free.
\end{prop}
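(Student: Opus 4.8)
The plan is to make the notion of freeness completely explicit and then deploy the two defining clauses of sec-ness of $t=\trd(x)$ to show that only the trivial class of $\AS(R)$ can fix a given sec algebra. By Theorem \ref{thmc} the action on the fiber $\disc^{-1}(d)$ factors through $\AS(R)/\ann_R(d)[4]$; since a sec algebra has $d=\disc(S)$ a nonzerodivisor, we have $\ann_R(d)[4]=0$, so the acting group is all of $\AS(R)$ and no collapsing has occurred. Freeness therefore reduces to the following assertion: if $S=R[x]/(x^2-tx+n)$ is sec of discriminant $d=t^2-4n$ with $t$ sec, and $m\in R[4]$ is such that the algebra
\[ S_m=R[w]/(w^2-tw+(n+dm)) \]
of Construction \ref{keycons} (where $dm=t^2m$ because $4m=0$ kills the cross term $4nm$) is isomorphic to $S$, then $m\in\wp(R)[4]$.

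First I would turn an $R$-algebra isomorphism $\phi\colon S\xrightarrow{\sim}S_m$ into equations. Writing $\phi(x)=uw+r$ with $u\in R^\times$, $r\in R$ as in \eqref{eq:changeofbasis}, and imposing $\phi(x)^2=t\phi(x)-n$, comparison of the coefficient of $w$ and of the constant term gives
\[ t(u-1)=-2r \qquad\text{and}\qquad u^2(n+dm)-r^2=n-tr. \]
Separately, since $\phi$ commutes with the standard involution and fixes $(x-\sigma(x))^2=d\in R$, one computes $d=u^2d$; as $d$ is a nonzerodivisor this forces $u^2=1$. Substituting $u^2=1$ into the second equation yields $dm=r^2-tr$.

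Now comes the heart of the argument, the use of sec-ness. From $t(u-1)=-2r$ we read $2r\in tR$, and from $dm=r^2-tr$ together with $dm=t^2m$ we get $r^2=tr+t^2m=t(r+tm)\in tR$. Clause (ii) in the definition of sec for $t$ then promotes these two containments to $r\in tR$, say $r=tr_0$. Substituting gives $t^2m=dm=r^2-tr=t^2(r_0^2-r_0)$, and since $t$, hence $t^2$, is a nonzerodivisor we may cancel to obtain $m=r_0^2-r_0=(-r_0)+(-r_0)^2\in\wp(R)$. As $m\in R[4]$ by hypothesis, $m\in\wp(R)\cap R[4]=\wp(R)[4]$, i.e. $m$ is trivial in $\AS(R)$, which is exactly freeness.

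I expect the main obstacle to be orchestrating the interplay of the nonzerodivisor hypotheses with clause (ii): the nonzerodivisor condition on $d$ is what delivers $u^2=1$ (without which the constant-term equation is unusable), the nonzerodivisor condition on $t$ is what lets me cancel $t^2$ at the end, and clause (ii) is precisely what upgrades the pair $2r,r^2\in tR$ to $r\in tR$. The only bookkeeping requiring care is the repeated use of $4m=0$ (to replace $dm$ by $t^2m$ and to annihilate $4nm$) and the observation that $-r_0$ automatically satisfies $(1+2(-r_0))^2=1$ because $m=(-r_0)+(-r_0)^2$ lies in $R[4]$, so that the final membership genuinely lands in $\wp(R)[4]$ and not merely in $\wp(R)$.
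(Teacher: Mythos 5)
Your proof is correct and follows essentially the same route as the paper's: both extract the change-of-basis equations from an isomorphism $S\simeq S_m$, use the nonzerodivisor hypothesis on $d$ to force $u^2=1$, invoke clause (ii) of sec-ness to upgrade $2r,r^2\in tR$ to $r\in tR$, and cancel $t^2$ to land $m$ in $\wp(R)$. The only differences are cosmetic (a transposed change-of-basis convention flipping a sign, and your added remarks that $\ann_R(d)[4]=0$ and that $m\in\wp(R)\cap R[4]=\wp(R)[4]$, which the paper leaves implicit).
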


\begin{proof}
We continue as in the proof of Theorem \ref{thmc}.  Let $m \in R[4]$ and let $S$ be a sec quadratic $R$-algebras with discriminant $d$.  Let $1,x$ be a basis for $S=R[x]/(x^2-tx+n)$ such that $t=\trd(x)$ is sec.  To show the action is free, suppose that
\begin{equation} \label{eq:SRyS}
S * (R[y]/(y^2-y+m)) = R[w]/(w^2-tw+dm+n) \simeq S;
\end{equation}
we show that $m \in \wp(R)$.  Equation (\ref{eq:SRyS}) holds if and only if there exist $u \in R^\times$ and $r \in R$ such that $t=u(t+2r)$ and $dm+n=u^2(n+tr+r^2)$.  Consequently, $u^2d = d$.  Since $S$ is sec, $d$ is a nonzerodivisor, so $u^2-1=0$, and thus $t^2m=dm=tr+r^2$ so $r^2 = (tm-r)t$.  We also have $2r=(1-u)t$, so since $t$ is sec and $r^2,2r \in tR$, we conclude that $r \in tR$.  Let $r=at$ with $a \in R$.  Then substituting, we have $t^2(a^2+a-m) = r^2+tr-t^2m = 0$.  Since $t$ is a nonzerodivisor, we conclude that $a^2+a-m=0$, so $m \in \wp(R)$ as claimed.  
\end{proof}


\begin{thebibliography}{999}

\bibitem{Auel}
Asher Auel, \emph{Clifford invariants of line bundle-valued quadratic forms}, \\
\verb|http://www.mathcs.emory.edu/~auel/papers/docs/AUEL_lineclifford.pdf|.

\bibitem{Bergman}
George Bergman, \emph{An invitation to general algebra and universal constructions}, \\ \verb|http://math.berkeley.edu/~gbergman/245/|.

\bibitem{BhargavaGCG}
Manjul Bhargava, \emph{Gauss composition and generalizations}, ANTS 2002, Claus Fieker and David Kohel, eds., 2002, 1--8.

\bibitem{BieselGioia}
Owen Biesel and Alberto Gioia, \emph{A new discriminant algebra construction}, 2015, \texttt{arXiv:1503.05318v1}.

\bibitem{BS1}
Manjul Bhargava, \emph{Higher composition laws and applications}, Proceedings of the ICM: Madrid, 2006, 271--294.

\bibitem{BurrisSank}
Stanley N.\ Burris and H.P.\ Sankappanavar, \emph{A course in universal algebra: millenenium edition}, \verb|http://www.math.uwaterloo.ca/~snburris/htdocs/ualg.html|.

\bibitem{DeligneLetter}
Pierre Deligne, \emph{Letter to Rost and Bhargava}, 2 March 2005.

\bibitem{GilmerHeitmann}
Robert Gilmer and Raymond C.~Heitmann, \emph{On $\Pic(R[X])$ for $R$ seminormal}, J.~Pure.~Appl.~Algebra \textbf{16} (1980), 251--257.

\bibitem{Hahn}
Alexander J.\ Hahn, \emph{Quadratic algebras, Clifford algebras, and arithmetic Witt groups}, Springer--Verlag, New York, 1994.

\bibitem{Kanzaki}
Teruo Kanzaki, \emph{On the quadratic extensions and the extended Witt ring of a commutative ring}, Nagoya Math.\ J. \textbf{49} (1973), 127--141.

\bibitem{Kleiman}
Steven L.~Kleiman, \emph{Misconceptions about $K_X$}, L'Enseignment Math.\ \textbf{25} (1979), 203--206.

\bibitem{Knus}
Max-Albert Knus, \emph{Quadratic and Hermitian forms over rings}, Grundlehren der Mathematischen Wissenschaften, vol.\ 294, Springer-Verlag, Berlin, 1991.

\bibitem{Lenstra}
Hendrik W.\ Lenstra, \emph{Galois theory for schemes}, 2008.

\bibitem{LoosMan}
Ottmar Loos, \emph{Tensor products and discriminants of unital quadratic forms over commutative rings}, Mh.\ Math. \textbf{122} (1996), 45--98.

\bibitem{LoosDisc}
Ottmar Loos, \emph{Discriminant algebras of finite rank algebras and quadratic trace modules}, Math.\ Z. \textbf{257} (2007), 467--523.

\bibitem{McMcT}
Ralph N.\ McKenzie, George F.\ McNulty, and Walter F.\ Taylor, \emph{Algebras, lattices, varieties}, vol.\ 1, Wadsworth \& Brooks/Cole, Monterey, California, 1987.

\bibitem{Rost}
Markus Rost, \emph{The discriminant algebra of a cubic algebra}, \\ 
\verb|http://www.math.uni-bielefeld.de/~rost/data/cub-disc.pdf|, 2002.

\bibitem{Small}
Charles Small, \emph{The group of quadratic extensions}, J.\ Pure Applied Algebra \textbf{2} (1972), 83--105.

\bibitem{stacks-project}
The {Stacks Project Authors}, \emph{Stacks project}, {\url{http://stacks.math.columbia.edu}}, 2015.

\bibitem{VoightLowRank}
John Voight, \emph{Rings of low rank with a standard involution}, Ill.\ J.\ Math.\ \textbf{55} (2011), no.\ 3, 1135--1154.

\bibitem{MWood}
Melanie Wood, \emph{Gauss composition over an arbitrary base}, Adv.\ Math.\ \textbf{226} (2011), no.\ 2, 1756--1771.

\end{thebibliography}
\end{document}